\numberwithin{equation}{section}
\newtheorem{theorem+}           {Theorem}      [section]
\newtheorem{definition+} {Definition}      [section]
\newtheorem{lemma+}  {Lemma}  [section]
\newtheorem{corollary+}  {Corollary} [section]
\newtheorem{proposition+}  {Proposition} [section]
\newtheorem{example+}  {Example}  [section]
\newtheorem{remark+}  {Remark}  [section]
\newtheorem{problem+}  {Problem}  [section]
\newenvironment{theorem}{\begin{theorem+}\sl}{\end{theorem+}\rm}
\newenvironment{problem}{\begin{problem+}\sl}{\end{problem+}\rm}
\newenvironment{definition}{\begin{definition+}\rm}{\end{definition+}\rm}
\newenvironment{lemma}{\begin{lemma+}\sl}{\end{lemma+}\rm}
\newenvironment{corollary}{\begin{corollary+}\sl}{\end{corollary+}\rm}
\newenvironment{proposition}{\begin{proposition+}\sl}{\end{proposition+}\rm}
\newenvironment{example}{\begin{example+}\rm}{\end{example+}\rm}
\newenvironment{remark}{\begin{remark+}\rm}{\end{remark+}\rm}
\newenvironment{proof}{\medbreak\noindent{\it Proof.\ }\rm}{\hfill$\square$\medbreak\rm}
\begin{document}

\begin{center}
{\Large\bf Necessary and sufficient conditions for the solvability
of the Gauss variational problem for infinite dimensional vector
measures}
\end{center}

\begin{center}
{\large Natalia Zorii}
\end{center}

\begin{abstract}
We continue our investigation of the Gauss variational problem for
infinite dimensional vector measures associated with a condenser
$(A_i)_{i\in I}$. It has been shown in Potential Anal.,
DOI:10.1007/s11118-012-9279-8 that, if some of the plates
(say~$A_\ell$ for $\ell\in L$) are noncompact then, in general,
there exists a vector~$\mathbf a=(a_i)_{i\in I}$, prescribing the
total charges on~$A_i$, $i\in I$, such that the problem admits {\it
no\/} solution. Then, what is a description of all the
vectors~$\mathbf a$ for which the Gauss variational problem is
nevertheless solvable? Such a characterization is obtained for a
positive definite kernel satisfying Fuglede's condition of
perfectness; it is given in terms of a solution to an auxiliary
extremal problem intimately related to the operator of orthogonal
projection onto the cone of all positive scalar measures supported
by~$\bigcup_{\ell\in L}\,A_\ell$. The results are illustrated by
examples pertaining to the Riesz kernels.

\vskip.2cm {\sl Subject classification}: 31C15.

\vskip.1cm {\sl Key words:} infinite dimensional vector measure,
external field, minimal energy problem
\end{abstract}

\section{Introduction}\label{sec:intr}

The interest in minimal energy problems in the presence of an
external field, which goes back to the pioneering works by
Gauss~\cite{Gauss} and Frostman~\cite{Fr}, was initially motivated
by their direct relations with the Dirichlet and balayage problems.
A new impulse to this part of potential theory (which is often
referred to as the Gauss variational problem) appeared in the 1980's
when Gonchar and Rakhmanov~\cite{GR0,GR1}, Mhaskar and
Saff~\cite{MaS} efficiently applied logarithmic potentials with
external fields in the investigation of orthogonal polynomials and
rational approximations to analytic functions. E.g., the vector
setting of the problem, earlier suggested by
Ohtsuka~\cite[Section~2.9]{O}, has nowadays become particularly
interesting in connection with Hermite--Pad\'{e} rational
approximations (see~\cite{A,GR0,GR,GRS,NS,ST}).

However, the potential-theoretical methods applied in these studies
were mainly based on the vague ({}=weak$^*$) topology, which made it
possible to establish the existence of a solution only for vector
measures of finite dimensions and compact support, though with a
rather general matrix of interaction between their components. See,
e.g., \cite[Theorem~2.30]{O}.

In order to treat the Gauss variational problem for vector measures
$\boldsymbol\mu=(\mu^i)_{i\in I}$ of infinite dimensions and/or
noncompact support, associated with a condenser, in~\cite{ZPot2} we
have suggested an approach based on introducing a metric structure
on the class of all~$\boldsymbol\mu$ with finite energy, which
agrees properly with the vague topology, and also on establishing an
infinite dimensional version of a completeness theorem (see below
for details).

This enabled us to obtain sufficient conditions for the solvability
of the problem (see~\cite[Theorem~8.1]{ZPot2}); moreover, we have
shown these sufficient conditions to be sharp by providing examples
of the nonsolvability (see~\cite[Examples~8.1,~8.2]{ZPot2}; see
below for some details).

It is worth emphasizing that, as is seen from these examples, such a
phenomenon of the nonsolvability occurs even for the Coulomb kernel
$|x-y|^{-1}$ in~$\mathbb R^3$ and a standard condenser of two
oppositely signed plates, one of them being noncompact, which at
first glance looks to be rather surprising because of an
electrostatic interpretation of the problem.

The purpose of this study is to establish conditions that are
simultaneously necessary and sufficient for the solvability of the
problem in the infinite dimensional vector setting.

To formulate the problem and to outline some of the results
obtained, we start by introducing relevant notions. In all that
follows, $\mathrm X$ denotes a locally compact Hausdorff space and
$\mathfrak M=\mathfrak M(\mathrm X)$ the linear space of all
real-valued scalar Radon measures~$\nu$ on~$\mathrm X$ equipped with
the {\it vague\/} topology, i.e. the topology of pointwise
convergence on the class $\mathrm C_0(\mathrm X)$ of all continuous
functions~$\varphi$ on~$\mathrm X$ with compact
support.\footnote{When speaking of a continuous function, we
understand that the values are {\it finite real\/} numbers.}

A {\it kernel\/}~$\kappa$ on $\mathrm X$ is meant to be an element
from $\mathrm\Phi(\mathrm X\times\mathrm X)$, where
$\mathrm\Phi(\mathrm Y)$ consists of all lower semicontinuous
functions $\psi:\mathrm Y\to(-\infty,\infty]$ such that
$\psi\geqslant0$ unless $\mathrm Y$ is compact. Given
$\nu,\nu_1\in\mathfrak M$, the {\it mutual energy\/} and the {\it
potential\/} relative to the kernel~$\kappa$ are defined by
$$\kappa(\nu,\nu_1):=\int\kappa(x,y)\,d(\nu\otimes\nu_1)(x,y)\quad\mbox{and}\quad
\kappa(\cdot,\nu):=\int\kappa(\cdot,y)\,d\nu(y),$$ respectively.
(When introducing a notation, we always tacitly assume the
corresponding object on the right to be well defined --- as a finite
number or $\pm\infty$.) For $\nu=\nu_1$, $\kappa(\nu,\nu_1)$ defines
the {\it energy\/} of~$\nu$. Let $\mathcal E=\mathcal E_\kappa$
consist of all $\nu\in\mathfrak M$ with
$-\infty<\kappa(\nu,\nu)<\infty$.

We shall mainly be concerned with a {\it positive definite\/}
kernel~$\kappa$, which by~\cite{F1} means that it is symmetric
(i.e., $\kappa(x,y)=\kappa(y,x)$ for all $x,y\in\mathrm X$) and
$\kappa(\nu,\nu)$, $\nu\in\mathfrak M$, is nonnegative whenever
defined. Then $\mathcal E$ forms a pre-Hil\-bert space with the
scalar product $\kappa(\nu,\nu_1)$ and the seminorm
$\|\nu\|_\mathcal E:=\|\nu\|_\kappa:=\sqrt{\kappa(\nu,\nu)}$. The
topology on~$\mathcal E$ defined by this seminorm is called {\it
strong\/}. A positive definite kernel~$\kappa$ is {\it strictly
positive definite\/} if the seminorm $\|\cdot\|_{\mathcal E}$ is a
norm.

Recall that a measure $\nu\geqslant0$ is {\it concentrated\/} on a
set $E\subset\mathrm X$ if $E^c:=\mathrm X\setminus E$ is locally
$\nu$-negligible; or equivalently, if $E$ is $\nu$-measurable and
$\nu=\nu|_E$, where $\nu|_E$ is the trace of~$\nu$ on~$E$ (see,
e.g., \cite{B2,E2}). Let $\mathfrak M^+_E$ be the convex cone of all
nonnegative measures concentrated on~$E$, and let $\mathcal
E^+_E:=\mathfrak M^+_E\cap\mathcal E$. We also write $\mathfrak
M^+:=\mathfrak M^+_\mathrm X$ and $\mathcal E^+:=\mathcal
E^+_\mathrm X$.

The {\it interior capacity\/} of a set~$E$ relative to the
kernel~$\kappa$ is given by the formula\footnote{Throughout the
paper, the infimum over the empty set is taken to be~$+\infty$.}
\begin{equation*}C(E):=C_\kappa(E):=1\bigl/\inf\,\kappa(\nu,\nu),\end{equation*}
where the infimum is taken over all $\nu\in\mathcal E^+_E$ with
$\nu(E)=1$.

We consider a countable, locally finite collection\footnote{If $I$
is a singleton, then we preserve the normal fonts instead of the
bold ones.} $\boldsymbol{A}=(A_i)_{i\in I}$ of closed sets, called
plates, $A_i\subset\mathrm X$ with the sign~$+1$ or $-1$ prescribed
such that the oppositely signed sets are mutually disjoint. Let
$\mathfrak M^+(\boldsymbol{A})$ stand for the Cartesian product
$\prod_{i\in I}\,\mathfrak M^+(A_i)$; then
$\boldsymbol\mu\in\mathfrak M^+(\boldsymbol{A})$ is a (nonnegative)
{\it vector measure\/} $(\mu^i)_{i\in I}$ with the components
$\mu^i\in\mathfrak M^+(A_i)$. The topology of the product space
$\prod_{i\in I}\,\mathfrak M^+(A_i)$, where every $\mathfrak
M^+(A_i)$ is endowed with the vague topology, is likewise called
{\it vague\/}. If $\boldsymbol{\mu}\in\mathfrak M^+(\boldsymbol{A})$
and a vector-valued function $\boldsymbol{u}=(u_i)_{i\in I}$ with
the $\mu^i$-measurable components $u_i:A_i\to[-\infty,\infty]$ are
given, then we write
$\langle\boldsymbol{u},\boldsymbol{\mu}\rangle:=\sum_{i\in I}\,\int
u_i\,d\mu^i$.\footnote{Here and in the sequel, an expression
$\sum_{i\in I}\,c_i$ is meant to be well defined provided that so is
every summand~$c_i$ and the sum does not depend on the order of
summation --- though might be $\pm\infty$.  Then, by the Riemann
series theorem, the sum is finite if and only if the series
converges absolutely.}

In accordance with an electrostatic interpretation of a condenser,
we assume that the interaction between the charges lying on the
conductors~$A_i$, $i\in I$, is characterized by the matrix
$(\alpha_i\alpha_j)_{i,j\in I}$, where $\alpha_i:={\rm sign}\,A_i$.
Given $\boldsymbol\mu,\boldsymbol\mu_1\in\mathfrak
M^+(\boldsymbol{A})$, we define the {\it mutual energy\/}
\begin{equation}\label{vectoren}\kappa(\boldsymbol\mu,\boldsymbol\mu_1):=\sum_{i,j\in
I}\,\alpha_i\alpha_j\kappa(\mu^i,\mu_1^j)\end{equation} and the {\it
vector potential\/} $\kappa_{\boldsymbol{\mu}}(x)$, $x\in\mathrm X$,
as a vector-valued function with the components
\begin{equation}\label{vectorpot}\kappa^i_{\boldsymbol{\mu}}(x):=\sum_{j\in
I}\,\alpha_i\alpha_j\kappa(x,\mu^j),\quad i\in I.\end{equation} For
$\boldsymbol\mu=\boldsymbol\mu_1$, the mutual energy
$\kappa(\boldsymbol\mu,\boldsymbol\mu_1)$ defines the {\it energy\/}
of~$\boldsymbol\mu$. Let $\mathcal E^+(\boldsymbol{A})$ consist of
all $\boldsymbol\mu\in\mathfrak M^+(\boldsymbol{A})$ with
$-\infty<\kappa(\boldsymbol\mu,\boldsymbol\mu)<\infty$.

Fix a vector-valued function $\boldsymbol{f}=(f_i)_{i\in I}$, where
each $f_i:\mathrm X\to[-\infty,\infty]$ is treated as an external
field acting on the charges on~$A_i$. Then the $\boldsymbol{f}$-{\it
weight\-ed vector potential\/} and the $\boldsymbol{f}$-{\it
weighted energy\/} of $\boldsymbol\mu\in\mathcal
E^+(\boldsymbol{A})$ are defined by
\begin{align}\label{def:wpot}\boldsymbol{W}_{\boldsymbol{\mu}}&:=\kappa_{\boldsymbol{\mu}}+\boldsymbol{f},\\
\label{def:wener}G_{\boldsymbol{f}}(\boldsymbol\mu)&:=
\kappa(\boldsymbol\mu,\boldsymbol\mu)+2\langle\boldsymbol{f},\boldsymbol\mu\rangle,\end{align}
respectively. Throughout this paper, we assume that\footnote{The
results obtained and the approach developed in~\cite{ZPot2} also
hold provided that $f_i\in\mathrm\Phi(\mathrm X)$ for all $i\in I$
(which is, in particular, the case if each~$f_i$ is the potential of
a {\it Dirac measure\/}). However, for a finer analysis to be
carried out in the present study in order to establish necessary and
sufficient conditions of the solvability, we need to assume the
external field to be generated--- in the sense of~(\ref{f})--- by a
charge of {\it finite\/} energy.}
\begin{equation}\label{f}f_i(x)=\alpha_i\kappa(x,\chi)\quad\text{for all \ } i\in
I,\end{equation} where a (signed) measure $\chi\in\mathcal E$ is
given, and we also write
$G_{\chi}(\boldsymbol\mu):=G_{\boldsymbol{f}}(\boldsymbol\mu)$.

Also fix a numerical vector $\boldsymbol{a}=(a_i)_{i\in I}$ with
$a_i>0$ for all $i\in I$ such that
\begin{equation}\label{ag}|\boldsymbol a|:=\sum_{i\in I}\,a_i<\infty\end{equation}
and a continuous function~$g$ on $A:=\bigcup_{i\in I}\,A_i$ with
\begin{equation}\label{ginf}g_{\inf}:=\inf_{x\in
A}\,g(x)>0.\end{equation} We are interested in the problem of
minimizing $G_{\chi}(\boldsymbol\mu)$ over the class of all
$\boldsymbol\mu\in\mathcal E^+(\boldsymbol{A})$ normalized by
$\langle g,\mu^i\rangle=a_i$ for all $i\in I$, referred to as the
{\it Gauss variational problem\/}.

The main question is whether minimizers $\boldsymbol\lambda$ in the
problem exist. If the condenser~$\boldsymbol{A}$ is finite and
compact, the kernel~$\kappa$ is continuous on $A_\ell\times A_j$
whenever $\alpha_\ell\ne\alpha_j$, and $f_i\in\mathrm\Phi(\mathrm
X)$ for all $i\in I$, then the existence of
those~$\boldsymbol\lambda$ can easily be established by exploiting
the vague topology only, since then the class of admissible vector
measures is vaguely compact, while
$G_{\boldsymbol{f}}(\boldsymbol\mu)$ is vaguely lower
semicontinuous. See~\cite[Theorem~2.30]{O}
(cf.~also~\cite{GR0,GR,NS,ST} for the logarithmic kernel in the
plane).

However, these arguments break down if any of the above-mentioned
four assumptions is dropped, and then the problem on the existence
of minimizers becomes rather nontrivial. In particular, the class of
admissible vector measures is no longer vaguely compact if any of
the~$A_i$ is noncompact. Another difficulty is that, under
assumption~(\ref{f}), the $\boldsymbol{f}$-weighted energy
$G_{\boldsymbol{f}}(\boldsymbol\mu)$ might not be vaguely lower
semicontinuous.

For a positive definite kernel satisfying Fuglede's condition of
consistency between the strong and vague topology on~$\mathcal E^+$
(see Definition~\ref{def:const} below), these difficulties have been
overcome in~\cite{ZPot2} in the frame of an approach\footnote{For a
background of this approach, see the pioneering work by
Fuglede~\cite{F1} (where $I=\{1\}$, $g=1$, and $f=0$) and also the
author's studies \cite{Z5a}--\cite{Z-Pot} (where $I$ is finite).}
based on the following crucial arguments.

The set $\mathcal E^+(\boldsymbol{A})$ has been shown to be a
semimetric space with the semimetric
\begin{equation}\label{vseminorm}
\|\boldsymbol\mu_1-\boldsymbol\mu_2\|_{\mathcal
E^+(\boldsymbol{A})}:=\Bigl[\sum_{i,j\in
I}\,\alpha_i\alpha_j\kappa(\mu^i_1-\mu^i_2,\mu^j_1-\mu^j_2)\Bigr]^{1/2},
\end{equation}
and one can define an inclusion $R$ of $\mathcal
E^+(\boldsymbol{A})$ into $\mathcal E$ such that $\mathcal
E^+(\boldsymbol{A})$ becomes isometric to its $R$-image, the latter
being regarded as a semimetric subspace of the pre-Hilbert
space~$\mathcal E$. See~\cite[Theorem~3.1]{ZPot2}; cf.~also
Theorem~\ref{lemma:semimetric} below. We therefore call the topology
of the semimetric space $\mathcal E^+(\boldsymbol{A})$ {\it
strong\/}.

Another crucial fact is that if, in addition, the kernel $\kappa$ is
bounded from above on the product of the oppositely signed plates,
then the topological subspace of~$\mathcal E^+(\boldsymbol{A})$
consisting of all~$\boldsymbol\mu$ with $\sum_{i\in
I}\,\mu^i(\mathrm X)\leqslant b$ (where $b$ is given) is strongly
complete. Moreover, the strong topology on this subspace is in
agreement with the vague one in the sense that any strong Cauchy net
converges strongly to each of its vague cluster points.
See~\cite[Theorem~9.1]{ZPot2}; cf.~also Theorem~\ref{th:complete}
below.

All these enabled us to prove in \cite{ZPot2} that if, moreover, $g$
is bounded from above, then the Gauss variational problem is
solvable provided that each $A_i$ either is compact or has finite
interior capacity. See~\cite[Theorem~8.1]{ZPot2}; cf.~also
Theorem~\ref{th:suff} below.

However, if some of the plates, say~$A_\ell$ for $\ell\in L$, are
noncompact and have infinite interior capacity then, in general,
there exists a vector~$\boldsymbol a$ such that the problem admits
{\it no\/} solution. See Examples~8.1 and~8.2 in~\cite{ZPot2},
illustrating such a phenomenon of the nonsolvability (cf.~also
Example~\ref{ex:sharp} below; see also~\cite{HWZ2,OWZ} for some
related numerical experiments).

Then, for given $\kappa$, $\boldsymbol A$, $g$, and $\chi$,  what is
a description of the set $\mathcal S_\kappa(\boldsymbol A,g,\chi)$
of all vectors~$\boldsymbol a$ for which the Gauss variational
problem is nevertheless solvable?

In this paper, such a characterization is established; it is given
in terms of a solution to an auxiliary extremal problem intimately
related to the operator of orthogonal projection onto the cone of
all positive scalar measures supported by~$\bigcup_{\ell\in
L}\,A_\ell$. In the case where $L$ is a singleton, the description
obtained is {\it complete}.

To give a hint how it looks like, we start with the following
example, where $L=\{\ell\}$ while $\kappa$ is the Riesz kernel
$\kappa_\alpha(x,y)=|x-y|^{\alpha-n}$ of order $\alpha\in(0,2]$,
$\alpha<n$, in~$\mathbb R^n$, $n\geqslant2$. For this kernel, the
operator of orthogonal projection in~$\mathcal E_{\kappa_\alpha}$
onto~$\mathcal E^+_{A_\ell}$ is, in fact, the operator of Riesz
balayage~$\beta^{\kappa_\alpha}_{A_\ell}$ onto~$A_\ell$, related to
the notion of $\alpha$-Green function~$g^\alpha_{A_\ell^c}$
of~$A_\ell^c$ by the formula
\begin{equation*}\label{greenn}g^\alpha_{A_\ell^c}(x,y):=
\kappa_\alpha(x,\varepsilon_y)-\kappa_\alpha(x,\beta^{\kappa_\alpha}_{A_\ell}\varepsilon_y),\end{equation*}
$\varepsilon_y$ being the unit Dirac measure at~$y$ (see, e.g.,
\cite[Chapter~4, Section~5]{L}).

\begin{example}\label{ex:11} Under these hypotheses, let the Euclidean
distance between the oppositely signed plates be nonzero, $A_i\cap
A_\ell=\varnothing$ for all $i\ne\ell$,
$C_{\kappa_\alpha}(A_i)>\delta>0$ for all $i\in I$, and let
$\chi\in\mathcal E_{\kappa_\alpha}$ be compactly supported in~$A^c$.

\begin{proposition}\label{pr:ex:11} Then the set\/
$\mathcal S_{\kappa_\alpha}(\boldsymbol A,g,\chi)$ consists of all
vectors\/ $\boldsymbol a=(a_i)_{i\in I}$ such that\/\footnote{Note
that $\sum_{i\ne\ell}\,\alpha_i\sigma^i$ is a scalar Radon measure
due to the local finiteness of~$\boldsymbol A$, so that
formula~(\ref{bal}) makes sense.}
\begin{equation}\label{bal}a_\ell\leqslant\Bigl\langle g,\beta^{\kappa_\alpha}_{A_\ell}\Bigl(\chi+\sum_{i\ne\ell}\,
\alpha_i\sigma^i\Bigr)\Bigr\rangle,\end{equation} where\/
$(\sigma^i)_{i\ne\ell}$ is a solution\/ {\rm(}it exists{\rm)} to the
problem of minimizing the $\alpha$-Green energy
\[\Bigl\|\chi+\sum_{i\ne\ell}\,
\alpha_i\mu^i\Bigr\|^2_{g^\alpha_{A_\ell^c}}\] over all\/
$(\mu^i)_{i\ne\ell}$ with the properties that\/ $\mu^i$, $i\ne\ell$,
is supported by\/~$A_i$ and\/ $\langle
g,\mu^i\rangle=a_i$.\end{proposition}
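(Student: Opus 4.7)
The approach is to recast the Gauss problem as a projection in the pre-Hilbert space $\mathcal{E}_{\kappa_\alpha}$ and to peel off the noncompact plate $A_\ell$ via Riesz balayage. Using~(\ref{def:wener})--(\ref{f}) and the isometric embedding $R\boldsymbol{\mu}:=\sum_{i\in I}\alpha_i\mu^i$ from Theorem~\ref{lemma:semimetric}, one rewrites
\begin{equation*}
G_\chi(\boldsymbol\mu)+\|\chi\|^2_{\kappa_\alpha}=\bigl\|\alpha_\ell\mu^\ell+\nu\bigr\|^2_{\kappa_\alpha},\qquad \nu:=\chi+\sum_{i\ne\ell}\alpha_i\mu^i.
\end{equation*}
Since $\chi$ is supported in $A^c$ and each $A_i$ with $i\ne\ell$ is disjoint from $A_\ell$, the signed measure $\nu$ is concentrated on $A_\ell^c$. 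Writing $\hat\nu:=\beta^{\kappa_\alpha}_{A_\ell}\nu$, the defining property of Riesz balayage gives $\kappa_\alpha(\tau,\nu-\hat\nu)=0$ for every $\tau\in\mathcal E$ supported on $A_\ell$; combined with the standard identity $\|\nu-\hat\nu\|^2_{\kappa_\alpha}=\|\nu\|^2_{g^\alpha_{A_\ell^c}}$, this yields the orthogonal decoupling
\begin{equation*}
G_\chi(\boldsymbol\mu)+\|\chi\|^2_{\kappa_\alpha}=\bigl\|\alpha_\ell\mu^\ell+\hat\nu\bigr\|^2_{\kappa_\alpha}+\|\nu\|^2_{g^\alpha_{A_\ell^c}}.
\end{equation*}

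The second term depends only on $(\mu^i)_{i\ne\ell}$ and is precisely the $\alpha$-Green energy being minimized in the Proposition. To produce the stated $(\sigma^i)_{i\ne\ell}$ I would apply Theorem~\ref{th:suff} to the reduced condenser $(A_i)_{i\ne\ell}$ equipped with the $\alpha$-Green kernel $g^\alpha_{A_\ell^c}$: this kernel is strictly positive definite and perfect (classically, cf.~\cite[Ch.~4]{L}); the remaining plates lie outside $L$ and are thus compact or of finite $\alpha$-Green capacity; the oppositely signed ones keep positive mutual Euclidean distance and positive distance to $A_\ell$, so $g^\alpha_{A_\ell^c}$ is bounded on their products; and $\chi\in\mathcal E_{\kappa_\alpha}$, being compactly supported in $A^c$, induces an admissible external field of finite $g^\alpha_{A_\ell^c}$-energy. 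Strict positive definiteness secures uniqueness of $(\sigma^i)_{i\ne\ell}$. Set $\sigma:=\chi+\sum_{i\ne\ell}\alpha_i\sigma^i$ and $\hat\sigma:=\beta^{\kappa_\alpha}_{A_\ell}\sigma$.

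Finally, I would characterize solvability of the full problem through the decoupling. Sufficiency ($a_\ell\leq\langle g,\hat\sigma\rangle$): take $\lambda^i:=\sigma^i$ for $i\ne\ell$ and $\lambda^\ell\in\mathcal E^+_{A_\ell}$ a minimizer of $\|\alpha_\ell\mu^\ell+\hat\sigma\|^2_{\kappa_\alpha}$ subject to $\langle g,\mu^\ell\rangle=a_\ell$, whose existence follows from the completeness theorem~\ref{th:complete} together with the fact that the feasible set is nonempty precisely because $a_\ell\leq\langle g,\hat\sigma\rangle$ allows a subconfiguration of $\hat\sigma$ to fulfil the mass constraint. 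Necessity ($a_\ell>\langle g,\hat\sigma\rangle$, after reducing WLOG to $\alpha_\ell=-1$): any candidate $\mu^\ell$ decomposes as $\hat\sigma+\rho$ with $\rho\in\mathcal E^+_{A_\ell}$ of positive $g$-mass $a_\ell-\langle g,\hat\sigma\rangle$, and since $A_\ell$ has infinite $\kappa_\alpha$-capacity, $\|\rho\|^2_{\kappa_\alpha}$ can be driven to zero by scattering $\rho$ along $A_\ell$, so the inner infimum is not attained; coupled with uniqueness of $(\sigma^i)_{i\ne\ell}$, this rules out a Gauss minimizer.

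The main obstacle is the necessity direction, where one has to combine uniqueness of the outer minimizer, the strong--vague agreement of Theorem~\ref{th:complete}, and the escape-to-infinity of the $g$-excess on $A_\ell$ in order to conclude that no minimizer of the full joint problem — not merely of its inner subproblem — can exist, a delicate interplay between the strong and vague topologies on $\mathcal E^+(\boldsymbol A)$.
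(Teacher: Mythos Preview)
Your orthogonal decoupling via balayage is correct and is precisely the content of Lemma~\ref{aux1} specialized to the Riesz setting (where $P_{A_\ell}=\beta^{\kappa_\alpha}_{A_\ell}$ and $\|\nu-P_{A_\ell}\nu\|_{\kappa_\alpha}=\|\nu\|_{g^\alpha_{A_\ell^c}}$); the paper derives Proposition~\ref{pr:ex:11} exactly this way, as a corollary of Theorem~\ref{th:solvable2}. Your necessity argument is also essentially right once you drop the unjustified claim that ``any candidate $\mu^\ell$ decomposes as $\hat\sigma+\rho$'': instead, use that the scattering construction forces the full infimum to equal the $CL$-infimum, so a hypothetical minimizer would make both terms of your decoupling minimal, whence by strict positive definiteness $(\lambda^i)_{i\ne\ell}=(\sigma^i)$ and $\lambda^\ell=\hat\sigma$, contradicting $\langle g,\lambda^\ell\rangle=a_\ell>\langle g,\hat\sigma\rangle$. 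This is the content of Theorem~\ref{th:solvable1}.

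The genuine gap is in your sufficiency argument when $a_\ell<\langle g,\hat\sigma\rangle$ strictly. You claim the inner minimizer in $\mu^\ell$ exists ``from the completeness theorem~\ref{th:complete} together with the fact that the feasible set is nonempty,'' but this is exactly where the difficulty lies: the constraint $\langle g,\mu^\ell\rangle=a_\ell$ is \emph{not} preserved under vague limits on the noncompact plate $A_\ell$ (mass can escape to infinity), so a strong/vague cluster point of a minimizing net may land with $\langle g,\gamma^\ell\rangle<a_\ell$. Theorem~\ref{th:complete} only gives you an extremal measure $\boldsymbol\gamma$; it does not by itself place $\boldsymbol\gamma$ in $\mathcal E^+(\boldsymbol A,\boldsymbol a,g)$. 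The paper closes this gap by the machinery of Sections~\ref{sec:extr}--\ref{proof:th:solvable2}: one exhausts $A_\ell$ by compacta $K_\ell$, passes to an extremal measure $\boldsymbol\gamma$ generated by the minimizers $\boldsymbol\lambda_{\boldsymbol A_{K_\ell}}$, obtains the pointwise potential inequalities (Theorem~\ref{th:descpot1} and Lemma~\ref{gamma00}), and then uses Lemma~\ref{0000} --- which crucially exploits the hypothesis $a_\ell<\langle g,\tilde\lambda^\ell\rangle$ --- to show $\langle W^\ell_{\boldsymbol\gamma},\gamma^\ell\rangle\ne0$, from which $\langle g,\gamma^\ell\rangle=a_\ell$ follows by integrating~(\ref{descpot3}). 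Your ``subconfiguration of $\hat\sigma$'' remark establishes only feasibility, not attainment; you would need to supply an argument of this type to complete the proof.
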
\end{example}

The rest of the paper is organized as follows. In
Sections~\ref{sec:cond} and~\ref{sec:gausspr} we summarize without
proof some results from~\cite{ZPot2}, necessary for the
understanding of the subject matter. A description of the set
$\mathcal S_\kappa(\boldsymbol A,g,\chi)$ is provided by
Theorems~\ref{th:solvable1} and~\ref{th:solvable2}, the main results
of the study; it is given in terms of a
solution~$\tilde{\boldsymbol\lambda}$ to an auxiliary extremal
problem (see Section~\ref{sec:gausspr} for its formulation), while
the existence of this~$\tilde{\boldsymbol\lambda}$ is guaranteed by
Theorem~\ref{lemma:exist}. The proofs of Theorems~\ref{lemma:exist},
\ref{th:solvable1} and~\ref{th:solvable2} are provided in
Sections~\ref{proof:lemma:exist}, \ref{proof:th:solvable1}
and~\ref{proof:th:solvable2}, respectively; see also
Sections~\ref{sec:prel}, \ref{sec:J}, \ref{sec:extr}
and~\ref{sec:descr} for some crucial auxiliary results. Finally,
Section~\ref{sec:ex} contains some further examples with the Riesz
kernels, illustrating the results obtained.

\section{Preliminaries}

We write $S(\nu)$ or $S_\nu$ for the support of $\nu\in\mathfrak M$.
A measure~$\nu$ is called {\it finite\/} if  $S_\nu$ is compact, and
{\it bounded\/} if $|\nu|(\mathrm X)<\infty$. Here
$|\nu|:=\nu^++\nu^-$, where $\nu^+$ and $\nu^-$ are respectively the
positive and negative parts in the Hahn--Jordan decomposition
of~$\nu$.

In this section we assume the kernel~$\kappa$ to be positive
definite. Then $\mathcal E$ forms a pre-Hil\-bert space with the
scalar product $\kappa(\nu,\nu_1)$ and the seminorm
$\|\nu\|:=\|\nu\|_{\mathcal
E}:=\|\nu\|_\kappa:=\sqrt{\kappa(\nu,\nu)}$, while the potential
$\kappa(\cdot,\nu)$ of any $\nu\in\mathcal E$ is well defined and
finite {\it nearly everywhere\/} (n.e.) in~$\mathrm X$, i.e. except
for a subset with interior capacity zero; see~\cite{F1}.

In minimal energy problems, the following lemma from~\cite{F1} is
often useful.

\begin{lemma}\label{lemma:convex}Consider a convex set\/ $\mathcal H\subset\mathcal E$
and assume there exists\/ $\lambda_{\mathcal H}\in\mathcal H$ such
that
\[\|\lambda_{\mathcal H}\|=\min_{\nu\in\mathcal H}\,\|\nu\|.\]
Then
\[\|\nu-\lambda_{\mathcal H}\|^2\leqslant
\|\nu\|^2-\|\lambda_{\mathcal H}\|^2\quad\text{for all \
}\nu\in\mathcal H.\]
\end{lemma}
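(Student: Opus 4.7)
The plan is to exploit the standard first-order optimality argument in a pre-Hilbert space, adapted to the seminormed setting of $\mathcal E$. Since $\mathcal H$ is convex and $\lambda_{\mathcal H}\in\mathcal H$, the segment $\mu_t:=(1-t)\lambda_{\mathcal H}+t\nu=\lambda_{\mathcal H}+t(\nu-\lambda_{\mathcal H})$ lies in $\mathcal H$ for every $t\in[0,1]$ and every $\nu\in\mathcal H$. The minimality of $\lambda_{\mathcal H}$ then gives $\|\mu_t\|^2\geqslant\|\lambda_{\mathcal H}\|^2$ for all such $t$.

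Next, I would expand the square using bilinearity of $\kappa(\cdot,\cdot)$ on $\mathcal E$:
\[
\|\mu_t\|^2=\|\lambda_{\mathcal H}\|^2+2t\,\kappa(\lambda_{\mathcal H},\nu-\lambda_{\mathcal H})+t^2\|\nu-\lambda_{\mathcal H}\|^2.
\]
Subtracting $\|\lambda_{\mathcal H}\|^2$, dividing by $t>0$, and letting $t\downarrow0$ yields the variational inequality $\kappa(\lambda_{\mathcal H},\nu-\lambda_{\mathcal H})\geqslant0$, equivalently $\kappa(\lambda_{\mathcal H},\nu)\geqslant\|\lambda_{\mathcal H}\|^2$.

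Finally, I would expand $\|\nu-\lambda_{\mathcal H}\|^2=\|\nu\|^2-2\kappa(\nu,\lambda_{\mathcal H})+\|\lambda_{\mathcal H}\|^2$ and substitute the inequality just obtained to conclude $\|\nu-\lambda_{\mathcal H}\|^2\leqslant\|\nu\|^2-2\|\lambda_{\mathcal H}\|^2+\|\lambda_{\mathcal H}\|^2=\|\nu\|^2-\|\lambda_{\mathcal H}\|^2$, as required.

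There is essentially no obstacle: the argument is the classical projection-inequality trick, and it works verbatim in a pre-Hilbert (rather than Hilbert) setting because only bilinearity and nonnegativity of $\kappa(\cdot,\cdot)$ are used. The only mild point to keep in mind is that $\|\cdot\|$ is a seminorm and $\mathcal E$ is only a pre-Hilbert space, so one must not appeal to strict positivity or to completeness; but the computation above uses neither, and the Cauchy--Schwarz-type identities needed are valid for any symmetric positive definite bilinear form.
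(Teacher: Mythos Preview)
Your proof is correct. Note, however, that the paper does not supply its own proof of this lemma: it is quoted as a known result from Fuglede~\cite{F1}, so there is no in-paper argument to compare against. The argument you give is precisely the standard one underlying that reference --- the first-variation inequality $\kappa(\lambda_{\mathcal H},\nu)\geqslant\|\lambda_{\mathcal H}\|^2$ obtained from convexity and minimality, followed by expansion of $\|\nu-\lambda_{\mathcal H}\|^2$ --- and your remark that only bilinearity and positive semidefiniteness (not strict definiteness or completeness) are used is exactly on point for the pre-Hilbert setting of~$\mathcal E$.
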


\subsection{Consistent and perfect
kernels}\label{sec:2}

In addition to the {\it strong\/} topology on~$\mathcal E$,
determined by the seminorm~$\|\nu\|$, sometimes we shall consider
the {\it weak\/} topology on~$\mathcal E$, defined by means of the
seminorms $\nu\mapsto|\kappa(\nu,\nu_1)|$, $\nu_1\in\mathcal E$
(see, e.g.,~\cite{F1}). The Cauchy--Schwarz inequality
\begin{equation*}
|\kappa(\nu,\nu_1)|\leqslant\|\nu\|\,\|\nu_1\|,\quad\mbox{where \ }
\nu,\nu_1\in\mathcal E,\end{equation*} implies immediately that the
strong topology on $\mathcal E$ is finer than the weak one.

In~\cite{F1,F2}, Fuglede introduced the following two {\it
equivalent\/} properties of consistency between the induced strong,
weak, and vague topologies on~$\mathcal E^+$:
\begin{itemize}
\item[\rm(C$_1$)] {\it Every strong Cauchy net in
$\mathcal E^+$ converges strongly to any of its vague cluster
points;}
\item[\rm(C$_2$)] {\it Every strongly bounded and vaguely convergent net in
$\mathcal E^+$ converges weakly to the vague limit.}
\end{itemize}

\begin{definition}\label{def:const}
Following Fuglede~\cite{F1}, we call a kernel~$\kappa$ {\it
consistent} if it satisfies either of the properties~(C$_1$)
and~(C$_2$), and {\it perfect\/} if, in addition, it is strictly
positive definite.\end{definition}

\begin{remark}\label{metriz} One has to consider {\it nets\/} or {\it filters\/}
in~$\mathfrak M^+$ instead of sequences, since the vague topology in
general does not satisfy the first axiom of countability. We follow
Moore's and Smith's theory of convergence, based on the concept of
nets (see~\cite{MS}; cf.~also~\cite[Chapter~0]{E2} and
\cite[Chapter~2]{K}). However, if $\mathrm X$ is metrizable and can
be written as a countable union of compact sets, then $\mathfrak
M^+$ satisfies the first axiom of countability
(see~\cite[Lemma~1.2.1]{F1}) and the use of nets may be
avoided.\end{remark}

\begin{theorem}{\rm(Fuglede \cite{F1})}\label{th:1} A kernel\/ $\kappa$ is perfect if and
only if\/ $\mathcal E^+$ is strongly complete and the strong
topology on\/~$\mathcal E^+$ is finer than the vague
one.\end{theorem}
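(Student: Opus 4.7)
The plan is to prove the two implications separately. The direction ``strongly complete $+$ strong finer than vague $\Rightarrow$ perfect'' amounts to a direct verification of the two conditions defining perfectness, while the converse ``perfect $\Rightarrow$ strongly complete $+$ strong finer than vague'' is more delicate, with the strong completeness of $\mathcal E^+$ being the principal obstacle.

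For the sufficiency direction, assume $\mathcal E^+$ is strongly complete and the strong topology on $\mathcal E^+$ is finer than the vague one. To check consistency in form (C$_1$), take a strong Cauchy net $(\nu_\alpha)\subset\mathcal E^+$; strong completeness produces a strong limit $\nu\in\mathcal E^+$, the finer-than-vague hypothesis promotes this to a vague limit, and Hausdorffness of the vague topology on $\mathfrak M^+$ forces any vague cluster point of the net to coincide with $\nu$. Thus the net converges strongly to any of its vague cluster points, as required. For strict positive definiteness, if $\nu\in\mathcal E^+$ satisfies $\|\nu\|_\kappa=0$, then the sequence $(n\nu)_{n\in\mathbb N}\subset\mathcal E^+$ is trivially strong Cauchy and has strong limit $\nu$, hence vague limit $\nu$; but $n\,\nu(\varphi)\to\nu(\varphi)$ for every $\varphi\in\mathrm C_0(\mathrm X)$ forces $\nu(\varphi)=0$, so $\nu=0$. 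The extension to signed $\nu\in\mathcal E$ reduces to the positive case via the standard fact $\nu^\pm\in\mathcal E^+$ together with the identity $\kappa(\nu,\mu)=0$ for all $\mu\in\mathcal E$, obtained from $\|\nu\|_\kappa=0$ through Cauchy--Schwarz.

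For the necessity direction, assume $\kappa$ is perfect. That the strong topology is finer than the vague one on $\mathcal E^+$ follows from uniqueness of cluster points: if $\nu_\alpha\to\nu$ strongly and $\nu'$ is any vague cluster point of $(\nu_\alpha)$, then property (C$_1$) yields $\nu_\alpha\to\nu'$ strongly, and strict positive definiteness (so that $\|\cdot\|_\kappa$ is a genuine norm) gives $\nu'=\nu$. Granted vague relative compactness of the strongly bounded net---to be established as part of the strong completeness argument below---this uniqueness of cluster points upgrades to full vague convergence.

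The hard part is strong completeness of $\mathcal E^+$. Given a strong Cauchy net $(\nu_\alpha)\subset\mathcal E^+$, I must produce a vague cluster point in $\mathfrak M^+$; then (C$_1$) delivers strong convergence to that cluster point, and vague lower semicontinuity of the energy together with strong boundedness places it in $\mathcal E^+$. Strong boundedness of the net is automatic from the Cauchy property, but converting it into vague boundedness---$\sup_\alpha\nu_\alpha(\varphi)<\infty$ for every $\varphi\in\mathrm C_0^+(\mathrm X)$---requires auxiliary test measures: for $\varphi$ with compact support one expects to choose $\mu\in\mathcal E^+$ whose potential $\kappa(\cdot,\mu)$ dominates $\varphi$ on $S_\mu$, and then to bound $\nu_\alpha(\varphi)$ by $\kappa(\nu_\alpha,\mu)\leqslant\|\nu_\alpha\|_\kappa\,\|\mu\|_\kappa$ via Cauchy--Schwarz. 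A Banach--Alaoglu-type extraction in the dual of $\mathrm C_0(\mathrm X)$ then supplies the sought vague cluster point. This compactness step is the genuine technical obstacle; once it is completed, both conclusions follow.
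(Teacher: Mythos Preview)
The paper does not prove this theorem; it is quoted from Fuglede~\cite{F1} and stated without proof, as is standard for a foundational result imported from the literature. So there is no ``paper's own proof'' against which to compare your attempt.

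That said, your sketch is along the right lines but leaves real work undone. In the sufficiency direction, your strict positive definiteness argument for signed $\nu$ is incomplete: from $\|\nu\|_\kappa=0$ and Cauchy--Schwarz you get $\kappa(\nu,\mu)=0$ for all $\mu\in\mathcal E$, but you do not explain how this forces $\nu=0$. A cleaner route is to note that $\|\nu^+-\nu^-\|_\kappa=0$ means the constant net $(\nu^+)$ converges strongly to $\nu^-$; the finer-than-vague hypothesis then gives vague convergence, and Hausdorffness forces $\nu^+=\nu^-$, hence both vanish by mutual singularity. In the necessity direction you correctly isolate the genuine obstacle --- vague relative compactness of a strongly bounded net in $\mathcal E^+$ --- but your proposed fix (choosing $\mu\in\mathcal E^+$ with $\kappa(\cdot,\mu)\geqslant\varphi$) is itself nontrivial: the existence of such $\mu$ for arbitrary $\varphi\in\mathrm C_0^+(\mathrm X)$ is essentially the existence of equilibrium measures for compact sets, and in Fuglede's treatment this is established via a separate minimization argument. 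Without filling that in, the necessity half remains a plan rather than a proof.
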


\begin{remark} In $\mathbb R^n$, $n\geqslant 3$, the
Newtonian kernel $|x-y|^{2-n}$ is perfect~\cite{Car}. So are the
Riesz kernels $|x-y|^{\alpha-n}$, $0<\alpha<n$, in~$\mathbb R^n$,
$n\geqslant2$~\cite{D1,D2}, and the restriction of the logarithmic
kernel $-\log\,|x-y|$ in~$\mathbb R^2$ to the open unit
disk~\cite{L}. Furthermore, if $D$ is an open set in~$\mathbb R^n$,
$n\geqslant 2$, and its generalized Green function~$g_D$ exists
(see, e.g.,~\cite[Theorem~5.24]{HK}), then the kernel~$g_D$ is
perfect as well~\cite{E1}.\end{remark}

It is seen from Definition~\ref{def:const} and Theorem~\ref{th:1}
that the concept of consistent or perfect kernels is an efficient
tool in minimal energy problems over classes of {\it nonnegative
scalar\/} Radon measures with finite energy. Indeed, Fuglede's
theory of capacities of {\it sets\/} has been developed in~\cite{F1}
for exactly those kernels. The following fundamental result of this
theory will often be used below.\footnote{Compare
with~\cite[Theorem~10.1]{Z-Pot}, generalizing Theorem~\ref{thF:1} to
the interior capacities of condensers with finitely many plates.}

\begin{theorem}\label{thF:1} Assume that the kernel\/ $\kappa$ is consistent. Then
for any set\/ $E\subset\mathrm X$ with $C(E)<\infty$ there exists a
measure\/ $\theta_E\in\mathcal E^+_{\overline{E}}$ with the
properties
\begin{align}
\label{equF:1}\theta_E(\mathrm X)&=\|\theta_E\|^2=C(E),\\
\label{equ:F2}
\kappa(x,\theta_E)&\geqslant1\quad\text{n.e.~in \ }E,\\
\kappa(x,\theta_E)&\leqslant1\quad\text{for all \ }x\in
S(\theta_E).\notag\end{align} This\/ $\theta_E$, called an interior
equilibrium measure associated with\/~$E$, is a solution to the
problem of minimizing\/~$\kappa(\nu,\nu)$ over the
class\/~$\Gamma_E$ of all\/~$\nu\in\mathcal E$ such that\/
$\kappa(x,\nu)\geqslant1$ n.e.~in\/~$E$, and it is determined
uniquely up to a summand with seminorm zero.
\end{theorem}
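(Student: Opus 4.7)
My plan is to produce $\theta_E$ as a rescaling of the strong limit of a minimizing net for the capacitary problem, and then verify the equilibrium relations and the minimization over $\Gamma_E$ by classical variational comparisons. Set $w := 1/C(E) \in (0,\infty)$ and take a minimizing net $\{\nu_\alpha\}$ in the convex class $\mathcal H_1 := \{\nu \in \mathcal E^+_E : \nu(E)=1\}$ with $\|\nu_\alpha\|^2 \to w$. Since $(\nu_\alpha+\nu_\beta)/2 \in \mathcal H_1$, the parallelogram identity gives
\[
\|\nu_\alpha - \nu_\beta\|^2 = 2\|\nu_\alpha\|^2 + 2\|\nu_\beta\|^2 - 4\|(\nu_\alpha+\nu_\beta)/2\|^2 \leqslant 2\|\nu_\alpha\|^2 + 2\|\nu_\beta\|^2 - 4w \to 0,
\]
so $\{\nu_\alpha\}$ is strongly Cauchy. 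Vague compactness of the set $\{\mu \in \mathfrak M^+ : \mu(\mathrm X) \leqslant 1\}$ furnishes a vague cluster point $\lambda$, and the consistency axiom (C$_1$) upgrades this to strong convergence $\nu_\alpha \to \lambda$, yielding $\|\lambda\|^2 = w$; since each $S(\nu_\alpha) \subset \overline E$, one has $\lambda \in \mathcal E^+_{\overline E}$. I would then set the candidate $\theta_E := C(E)\lambda$.

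Next I would derive the pointwise equilibrium relations for $\lambda$ by first-order perturbation. For any $\nu_0 \in \mathcal H_1$ the curve $\lambda_\epsilon := (1-\epsilon)\lambda + \epsilon\nu_0$ remains in $\mathcal H_1$ for $\epsilon \in [0,1]$; the expansion $\|\lambda_\epsilon\|^2 - \|\lambda\|^2 = 2\epsilon(\kappa(\lambda,\nu_0) - w) + O(\epsilon^2)$ and minimality of $\lambda$ force $\kappa(\lambda,\nu_0) \geqslant w$. Letting $\nu_0$ concentrate near a variable point of $E$ delivers $\kappa(\cdot,\lambda) \geqslant w$ n.e.\ on $E$, the n.e.\ qualifier being inherent since admissible perturbations cannot be built on sets of interior capacity zero. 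A companion perturbation that transfers mass inside $S(\lambda)$ from a region of high potential to one of low potential produces the reverse inequality $\kappa(\cdot,\lambda) \leqslant w$, initially $\lambda$-a.e.\ on $S(\lambda)$ and then everywhere on $S(\lambda)$ by lower semicontinuity of $\kappa(\cdot,\lambda)$. Integrating the resulting identity $\kappa(\cdot,\lambda) = w$ $\lambda$-a.e.\ against $\lambda$ --- legitimate because a finite-energy measure does not charge sets of interior capacity zero --- yields $\|\lambda\|^2 = w\,\lambda(\mathrm X)$, whence $\lambda(\mathrm X) = 1$: no mass is lost in the vague passage. Rescaling by $C(E)$ then delivers all of \eqref{equF:1} together with the two inequalities in the statement.

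For the variational characterization over $\Gamma_E$, I would take an arbitrary $\nu \in \Gamma_E$ and integrate the defining inequality $\kappa(\cdot,\nu) \geqslant 1$ against $\theta_E$; since $\theta_E$ ignores sets of interior capacity zero, this yields $\kappa(\nu,\theta_E) \geqslant \theta_E(\mathrm X) = C(E) = \|\theta_E\|^2$, and the Cauchy--Schwarz inequality then forces $\|\nu\|^2 \geqslant \|\theta_E\|^2$. Uniqueness up to a summand of zero seminorm follows from Lemma~\ref{lemma:convex} applied to the convex set $\Gamma_E$, since $\|\nu - \theta_E\|^2 \leqslant \|\nu\|^2 - \|\theta_E\|^2 = 0$ for any other minimizer $\nu$. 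The main obstacle I expect is the passage to the vague limit when $E$ is noncompact: \emph{a priori} the cluster point $\lambda$ could carry total mass strictly less than $1$, placing $\theta_E$ outside $\mathcal H_1$; consistency secures the strong convergence regardless, and the correct total mass is recovered only \emph{a posteriori} from the equilibrium identity $\kappa(\cdot,\lambda) = w$ on $S(\lambda)$.
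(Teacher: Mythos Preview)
The paper does not give a proof of this theorem; it is quoted from Fuglede~\cite{F1} as a known result, so there is no argument in the paper to compare yours against. Judged on its own merits, your outline follows the standard variational route, but there is a genuine circularity in the perturbation step.

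You write that for $\nu_0\in\mathcal H_1$ the curve $\lambda_\epsilon=(1-\epsilon)\lambda+\epsilon\nu_0$ remains in $\mathcal H_1$, and then invoke ``minimality of $\lambda$'' to deduce $\kappa(\lambda,\nu_0)\geqslant w$. But $\lambda_\epsilon\in\mathcal H_1$ requires $\lambda\in\mathcal H_1$, i.e.\ both $\lambda(\mathrm X)=1$ and $\lambda$ concentrated on $E$ (not merely on~$\overline E$). Neither is available at this stage: you only recover $\lambda(\mathrm X)=1$ \emph{after} the equilibrium identities, as you yourself flag in the last paragraph, and the vague limit is a~priori only in $\mathcal E^+_{\overline E}$. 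Consequently the comparison $\|\lambda_\epsilon\|^2\geqslant\|\lambda\|^2$ is unjustified where you use it, and the same objection hits your ``companion perturbation'' for the upper bound on $S(\lambda)$.

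The fix is easy but must be made explicit: run the convex perturbation with the approximants~$\nu_\alpha$ (which \emph{do} lie in~$\mathcal H_1$) in place of~$\lambda$, obtain $\|(1-\epsilon)\nu_\alpha+\epsilon\nu_0\|^2\geqslant w$, and only then pass to the strong (hence weak) limit to get $\kappa(\lambda,\nu_0)\geqslant w$; the upper bound on $S(\lambda)$ then follows from integrating the n.e.\ lower bound against $\lambda$ together with $\|\lambda\|^2=w$, rather than from a second perturbation. Alternatively, and this is how Fuglede organizes it, first settle the compact case---where the admissible class is vaguely compact, so the limit keeps mass~$1$ and support in~$E$---and then pass to arbitrary $E$ with $C(E)<\infty$ along the upward-filtering family of compact subsets, using consistency to control the strong convergence of the associated equilibrium measures.
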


\begin{remark} In~\cite{Z-Pot,ZPot2} we have shown that the concept of consistent or perfect kernels
is efficient, as well, in minimal energy problems over classes of
{\it vector measures\/} of finite or infinite dimensions, associated
with a condenser~$\boldsymbol A$. This is guaranteed by a theorem on
the completeness of certain topological subspaces of the semimetric
space~$\mathcal E^+(\boldsymbol{A})$ (see~\cite[Theorem~13.1]{Z-Pot}
and \cite[Theorem~9.1]{ZPot2}, cf.~also Theorem~\ref{th:complete}
below; compare with Theorem~\ref{th:1}).\end{remark}

\section{Auxiliary results related to scalar measures}\label{sec:prel}

In the following Lemmas~\ref{Lemma2.3.1}--\ref{lemma:vague.cont},
the kernel is arbitrary (not necessarily positive definite).

\begin{lemma}{\rm(Fuglede \cite{F1})}\label{Lemma2.3.1}
For any given\/ $E\subset\mathrm X$, it holds that\/
$C(E)=0\Longleftrightarrow\mathcal E^+_E=\varnothing$.\end{lemma}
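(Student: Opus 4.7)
The plan is to establish both implications by the contrapositive, reducing the entire argument to a single truncation-and-scaling construction.

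For the implication $\mathcal{E}^+_E=\varnothing\Rightarrow C(E)=0$: if the class $\mathcal{E}^+_E$ admits no (nonzero) element, then \emph{a fortiori} no $\nu\in\mathcal{E}^+_E$ satisfies the normalization $\nu(E)=1$, so the infimum defining $1/C(E)$ is taken over an empty set. By the convention stated in the footnote, this infimum equals $+\infty$, whence $C(E)=0$.

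For the converse, I argue the contrapositive: assuming there exists a nonzero $\nu_0\in\mathcal{E}^+_E$, I exhibit an admissible competitor in the variational problem for $C(E)$, thereby forcing $C(E)>0$. First, since $\nu_0$ is a nonzero Radon measure, inner regularity supplies a compact set $K\subset\mathrm{X}$ with $0<\nu_0(K)<\infty$. Put $\nu_1:=\nu_0|_K$; this is nonnegative and bounded, and is still concentrated on $E$, because $E^c$ is locally $\nu_0$-negligible, hence locally $\nu_1$-negligible, so $\nu_1\in\mathfrak{M}^+_E$. Finally, normalize by setting $\nu:=\nu_1/\nu_1(\mathrm{X})$: then $\nu$ is concentrated on $E$, satisfies $\nu(E)=\nu(\mathrm{X})=1$, and has $\kappa(\nu,\nu)=\kappa(\nu_1,\nu_1)/\nu_1(\mathrm{X})^2$. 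If the latter is finite, $\nu$ is admissible in the definition of $C(E)$ and hence $C(E)\geqslant 1/\kappa(\nu,\nu)>0$.

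The only non-routine point is the finiteness of $\kappa(\nu_1,\nu_1)$, i.e., the preservation of finite energy under restriction to a compact subset. This is where the $\Phi$-class hypothesis on $\kappa$ enters decisively: by definition, $\kappa\geqslant 0$ unless $\mathrm{X}\times\mathrm{X}$ is compact. In the non-compact case the integrand is nonnegative, so the monotonicity
$$\kappa(\nu_1,\nu_1)=\int\kappa\,d(\nu_1\otimes\nu_1)\leqslant\int\kappa\,d(\nu_0\otimes\nu_0)=\kappa(\nu_0,\nu_0)<\infty$$
is immediate. In the remaining case $\mathrm{X}$ itself is compact, every Radon measure is already bounded, and the truncation step can be bypassed altogether by taking $\nu_1:=\nu_0$, whose energy is finite by the hypothesis $\nu_0\in\mathcal{E}$. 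I expect this small case-split to be the only delicate bookkeeping; the rest of the argument is purely formal manipulation of the definitions of $C(E)$, of ``concentrated on $E$'', and of the class $\mathcal{E}^+_E$.
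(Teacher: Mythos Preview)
Your argument is correct. The paper does not actually prove this lemma; it is stated with attribution to Fuglede~\cite{F1} and used as a black box throughout. Your truncation-and-normalization construction is precisely the standard route (and essentially Fuglede's): the only substantive point is that restriction to a compact set preserves finite energy, which you handle cleanly via the $\Phi$-class hypothesis (nonnegativity of~$\kappa$ when $\mathrm X$ is noncompact, and the triviality of the truncation when $\mathrm X$ is compact). One cosmetic remark: as written in the paper, $\mathcal E^+_E$ is a convex cone and hence always contains~$0$, so the statement should really read $\mathcal E^+_E=\{0\}$ rather than $\mathcal E^+_E=\varnothing$; you correctly interpret it this way by working with a \emph{nonzero} $\nu_0$.
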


\begin{lemma}\label{a.e.}If $\nu\in\mathcal E^+_{E}$ is bounded, then any proposition
holds\/ $\nu$-almost everywhere {\rm(}$\nu$-a.e.\/{\rm)}
in\/~$\mathrm X$, provided that it holds n.e.~in\/~$E$.\end{lemma}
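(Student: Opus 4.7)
The plan is to reduce the lemma to showing that any subset $F_0\subset E$ with $C(F_0)=0$ is $\nu$-negligible; the part of the exceptional set lying in $E^c$ is automatically $\nu$-negligible, because $\nu$ is bounded and concentrated on $E$, hence $\nu(E^c)=0$. The hypothesis that the proposition holds n.e.\ in $E$ means precisely that the exceptional set intersected with $E$ has interior capacity zero, so Lemma~\ref{Lemma2.3.1} immediately yields $\mathcal E^+_{F_0}=\varnothing$.

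I would then argue by contradiction. Assume some compact $K\subset F_0$ satisfies $\nu(K)>0$. The restriction $\nu|_K$ is a nonzero bounded positive Radon measure concentrated on $K\subset F_0$; provided that $\kappa(\nu|_K,\nu|_K)<\infty$, the normalized measure $\nu|_K/\nu(K)$ is a nonzero element of $\mathcal E^+_{F_0}$, contradicting $\mathcal E^+_{F_0}=\varnothing$. Having thereby shown that every compact subset of $F_0$ is $\nu$-null, inner regularity of the bounded Radon measure $\nu$ delivers $\nu^*(F_0)=0$, i.e.\ $F_0$ is $\nu$-negligible.

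The main obstacle, and the only delicate point, is justifying the finiteness of $\kappa(\nu|_K,\nu|_K)$, since the kernel is not assumed to be positive definite and may even take negative values when $\mathrm X$ is compact. If $\mathrm X$ is noncompact, then $\kappa\geqslant 0$ by the very definition of $\mathrm\Phi(\mathrm X\times\mathrm X)$, and the pointwise inequality $\nu|_K\leqslant\nu$ gives $\kappa(\nu|_K,\nu|_K)\leqslant\kappa(\nu,\nu)<\infty$ at once. In the remaining compact case, lower semicontinuity of $\kappa$ on the compact space $\mathrm X\times\mathrm X$ forces $\kappa\geqslant -M$ for some finite $M$; applying the preceding monotonicity argument to $\kappa+M\geqslant 0$ yields $\kappa(\nu|_K,\nu|_K)<\infty$ as well. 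With the finiteness of the restricted energy in hand, the contradiction goes through in both cases and the lemma follows.
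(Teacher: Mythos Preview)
Your reduction to showing that the capacity-zero set $F_0\subset E$ is $\nu$-negligible is correct, and your energy argument for $\nu|_K$ (including the case split on whether $\mathrm X$ is compact) is fine. The problem is the last step. From ``every compact $K\subset F_0$ satisfies $\nu(K)=0$'' you invoke inner regularity to conclude $\nu^*(F_0)=0$. Inner regularity says $\nu(B)=\sup\{\nu(K):K\subset B\text{ compact}\}$ only for $\nu$-\emph{measurable} $B$; for an arbitrary set this supremum is the \emph{inner} measure, which can be strictly smaller than the outer measure. The exceptional set $F_0$ carries no measurability hypothesis (the ``proposition'' in the lemma is arbitrary), so you have only shown $\nu_*(F_0)=0$, not $\nu^*(F_0)=0$. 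A non-measurable subset of $[0,1]$ with inner Lebesgue measure~$0$ and outer measure~$1$ shows the inference is invalid in general.

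The paper's argument avoids this by quoting the full content of Fuglede's Lemma~\ref{Lemma2.3.1}: a set of interior capacity zero is \emph{locally $\xi$-negligible} for every $\xi\in\mathcal E^+$, i.e.\ $\xi^*(F_0\cap K)=0$ for every compact $K\subset\mathrm X$ --- a statement about outer measure, not about compact subsets of $F_0$. Then, since $\nu$ is bounded and concentrated on $E$, the set $E$ is $\nu$-integrable, and \cite[Proposition~4.14.1]{E2} upgrades local $\nu$-negligibility of $F_0\subset E$ to genuine $\nu$-negligibility. Your restriction-and-energy idea is in fact the germ of Fuglede's proof of that local-negligibility statement, but one has to work with $\nu^*(F_0\cap K)$ for arbitrary compact $K$, not with $\nu(K)$ for compact $K\subset F_0$; that is the missing ingredient.
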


\begin{proof}Indeed, then $E$ is $\nu$-integrable and hence,
by~\cite[Proposition~4.14.1]{E2}, any locally $\nu$-neg\-ligible
subset of~$E$ is $\nu$-negligible. Since, according to
Lemma~\ref{Lemma2.3.1}, a set of interior capacity zero is locally
$\xi$-negligible for any $\xi\in\mathcal E^+$, the lemma
follows.
\end{proof}

\subsection{On continuity of potentials}\label{sec:cont}

We shall need the following lemmas on continuity, the first being
well known (see, e.g.,~\cite{F1}).

\begin{lemma}\label{lemma:lower}
For any\/ $\psi\in\mathrm\Phi(\mathrm X)$ the map\/
$\nu\mapsto\langle\psi,\nu\rangle$ is vaguely lower semicontinuous
on\/~$\mathfrak M^+$.\end{lemma}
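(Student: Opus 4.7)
The plan is to reduce the assertion to the vague continuity of the pairing against test functions in $\mathrm C_0(\mathrm X)$, which holds by definition of the vague topology, by expressing $\psi$ as the pointwise supremum of an upward-directed family of such test functions.

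First I would distinguish two cases according to the definition of $\mathrm\Phi(\mathrm X)$. If $\mathrm X$ is compact, then $\mathrm C_0(\mathrm X)=\mathrm C(\mathrm X)$ and the lower semicontinuous function $\psi$ is automatically bounded from below; by a standard consequence of Urysohn's lemma $\psi$ is the pointwise supremum of a family in $\mathrm C(\mathrm X)$, which after closing under pointwise maxima may be taken upward directed. Otherwise $\mathrm X$ is not compact, so $\psi\geqslant 0$ by the definition of $\mathrm\Phi(\mathrm X)$; using complete regularity of the locally compact Hausdorff space $\mathrm X$ together with truncation by compactly supported cut-offs, $\psi$ is the pointwise supremum of the upward-directed family
\[
\Psi:=\{\varphi\in\mathrm C_0(\mathrm X):\ 0\leqslant\varphi\leqslant\psi\}.
\]

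Next I would invoke the identity
\[
\langle\psi,\nu\rangle=\sup_{\varphi\in\Psi}\,\langle\varphi,\nu\rangle\quad\text{for every }\nu\in\mathfrak M^+,
\]
which follows from the monotone convergence theorem for positive Radon measures applied to the directed net $\Psi$ (in the compact case, after subtracting a lower bound of $\psi$ to reduce to the nonnegative setting); the supremum may equal $+\infty$, which is harmless. Since each map $\nu\mapsto\langle\varphi,\nu\rangle$ with $\varphi\in\mathrm C_0(\mathrm X)$ is vaguely continuous on~$\mathfrak M^+$ by definition of the vague topology, the map $\nu\mapsto\langle\psi,\nu\rangle$ is a pointwise supremum of vaguely continuous functions, hence vaguely lower semicontinuous.

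The only real step is the approximation of $\psi$ from below by functions in $\mathrm C_0(\mathrm X)$; this rests on complete regularity of $\mathrm X$ in the noncompact case and on normality in the compact one. Once this approximation is in hand, the passage from the supremum of integrals to the integral of the supremum is a direct application of monotone convergence, and lower semicontinuity is inherited automatically from the pointwise supremum of continuous functions.
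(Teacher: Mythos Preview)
Your argument is correct and is the standard proof of this classical fact. The paper does not supply its own proof of this lemma; it simply records the statement as well known, with a reference to Fuglede~\cite{F1}, so there is nothing to compare beyond noting that your approximation-by-$\mathrm C_0$ argument is precisely the one underlying the cited result.
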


In particular, this implies that the potential $\kappa(\cdot,\nu)$
of any $\nu\in\mathfrak M^+$ belongs to~$\mathrm\Phi(\mathrm X)$.

\begin{lemma}\label{lemma:vague.cont2} Assume that\/ $\nu\in\mathfrak M^+$ is
bounded, $\kappa(x,y)$ is continuous for\/ $x\ne y$ and
\begin{equation}\label{kern.bound} \sup_{x\in K,\,y\in
S_\nu}\,\kappa(x,y)<\infty\quad\text{for every compact \ }K\subset
S^c_\nu.\end{equation}  Then the potential\/ $\kappa(\cdot,\nu)$ is
continuous at every\/ $x_0\notin S_\nu$.
\end{lemma}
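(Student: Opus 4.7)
The plan is to prove continuity of $\kappa(\cdot,\nu)$ at $x_0\notin S_\nu$ by a two-piece estimate that combines uniform continuity of $\kappa$ on a compact product set avoiding the diagonal with inner regularity of the bounded Radon measure~$\nu$.

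First I would use local compactness of~$\mathrm X$ together with the closedness of~$S_\nu$ to choose a compact neighborhood~$V$ of~$x_0$ disjoint from~$S_\nu$. Hypothesis~(\ref{kern.bound}) then supplies a constant $M<\infty$ with $\kappa(x,y)\leqslant M$ on $V\times S_\nu$, which in particular yields $\kappa(x_0,\nu)\leqslant M\nu(\mathrm X)<\infty$, since $\nu$ is bounded. I would also record that $\kappa$ is bounded below on $V\times S_\nu$: by~$0$ when $\mathrm X$ is non-compact (this is the convention $\kappa\geqslant0$ built into the definition of a kernel), and by a finite constant otherwise, because then $\kappa$ is lower semicontinuous on the compact space $\mathrm X\times\mathrm X$. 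Call the resulting two-sided bound on $|\kappa|$ over $V\times S_\nu$ simply $M'$.

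Next, given $\varepsilon>0$, inner regularity of~$\nu$ yields a compact subset $K\subset S_\nu$ with $\nu(S_\nu\setminus K)<\varepsilon$. Since $V\times K$ is a compact subset of the open set $\{(x,y)\in\mathrm X\times\mathrm X:\,x\ne y\}$ on which $\kappa$ is continuous, $\kappa$ is uniformly continuous on $V\times K$. A standard finite-subcover argument in the compact second variable~$K$ then extracts a neighborhood $U\subset V$ of~$x_0$ such that $|\kappa(x,y)-\kappa(x_0,y)|<\varepsilon$ for all $x\in U$ and $y\in K$.

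Finally, for $x\in U$ I would split $\int|\kappa(x,y)-\kappa(x_0,y)|\,d\nu(y)$ into the contributions from~$K$ and from~$S_\nu\setminus K$, estimating the first by $\varepsilon\,\nu(\mathrm X)$ and the second by $2M'\varepsilon$. Since $\varepsilon>0$ was arbitrary, this gives $|\kappa(x,\nu)-\kappa(x_0,\nu)|\to0$ as $x\to x_0$, proving the lemma. The only real obstacle is that $S_\nu$ need not be compact, which is precisely what forces both the reduction to a compact set~$K$ via inner regularity and the exploitation of the uniform bound~(\ref{kern.bound}) to control the tail on $S_\nu\setminus K$; once these two ingredients are in place, the argument reduces to a routine $\varepsilon$-estimate.
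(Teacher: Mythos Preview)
Your argument is correct, but it follows a genuinely different route from the paper's. The paper never splits the integral or invokes inner regularity; instead it observes that $\kappa(\cdot,\nu)$ is already lower semicontinuous (as the potential of a nonnegative measure, cf.~Lemma~\ref{lemma:lower}), so it suffices to prove upper semicontinuity at~$x_0$. To do this the paper flips the sign: on $V_{x_0}\times S_\nu$ it sets $\kappa^*(x,y):=-\kappa(x,y)+\sup_{V_{x_0}\times S_\nu}\kappa$, which is nonnegative and continuous by hypothesis, so its potential $\kappa^*(\cdot,\nu)$ is lower semicontinuous; since $\nu$ is bounded and the supremum finite, $\kappa^*(\cdot,\nu)$ differs from $-\kappa(\cdot,\nu)$ by a finite constant, and the claim follows. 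The paper's trick is shorter and reuses the general lower-semicontinuity lemma already in hand, avoiding any explicit $\varepsilon$-bookkeeping; your approach is more self-contained and does not appeal to that lemma, at the cost of the extra inner-regularity step to handle a possibly noncompact~$S_\nu$. Both work without difficulty here.
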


\begin{proof} Having fixed a point $x_0\notin S_\nu$
and its compact neighborhood $V_{x_0}$ so that $V_{x_0}\cap
S_\nu=\varnothing$, we consider the function $\kappa^*(x,y)$ on
$V_{x_0}\times S_\nu$ defined by
\begin{equation}\label{kappastar}
\kappa^*(x,y):=-\kappa(x,y)+\sup_{{x'}\in V_{x_0},\,{y'}\in
S_\nu}\,\kappa({x'},{y'}).
\end{equation}
Under the hypotheses of the lemma, $\kappa^*$ is nonnegative and
continuous; hence,
\[\kappa^*(x,\nu)=\int\kappa^*(x,y)\,d\nu(y),\quad x\in
V_{x_0},\] is lower semicontinuous as the potential of
$\nu\in\mathfrak M^+$ relative to the kernel~$\kappa^*$.

On the other hand, integrating~(\ref{kappastar}) with respect to the
(bounded) measure~$\nu$, we conclude from
assumption~(\ref{kern.bound}) that $\kappa^*(x,\nu)$, $x\in
V_{x_0}$, coincides up to a finite summand with the restriction
of~$-\kappa(x,\nu)$ to~$V_{x_0}$. What has been shown just above
therefore implies that $-\kappa(\cdot,\nu)$ is lower semicontinuous
at~$x_0$, and the lemma follows.
\end{proof}

\begin{definition}\label{inf}A kernel $\kappa$ is said to {\it possess the property\/}
$(\infty_\mathrm X)$ if $\kappa(\cdot,y)\to0$ as $y\to\infty_\mathrm
X$ uniformly on compact sets; then for every $\varepsilon>0$ and
every compact $K\subset\mathrm X$ there exists a compact set
$K'\subset\mathrm X$ such that $|\kappa(x,y)|<\varepsilon$ for all
$x\in K$ and $y\in\mathrm X\setminus K'$.\end{definition}

For any $b\in(0,\infty)$, write $\mathfrak
M_b:=\bigl\{\nu\in\mathfrak M: \ |\nu|(\mathrm X)\leqslant
b\bigr\}$.

\begin{lemma}\label{lemma:vague.cont} Fix a closed set\/ $F\subset\mathrm
X$, a closed subset\/~$Q$ of\/~$F^c$, and\/ $b\in(0,\infty)$. If a
kernel\/ $\kappa(x,y)$ is continuous for\/ $x\ne y$ and possesses
the property\/~$(\infty_\mathrm X)$, then the mapping
\[(x,\nu)\mapsto\kappa(x,\nu)\quad\text{on \ }Q\times\bigl(\mathfrak
M^+_F\cap\mathfrak M_b\bigr)\]  is continuous in the product
topology, where\/ $Q$ and\/ $\mathfrak M^+_F\cap\mathfrak M_b$ are
considered to be topological subspaces of\/~$\mathrm X$
and\/~$\mathfrak M$, respectively.\footnote{Compare with~\cite[Lemma
2.2.1, assertion~(b)]{F1}.}
\end{lemma}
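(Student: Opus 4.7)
The plan is to verify continuity directly at an arbitrary point $(x_0,\nu_0)\in Q\times(\mathfrak M^+_F\cap\mathfrak M_b)$ by considering a net $(x_\alpha,\nu_\alpha)\to(x_0,\nu_0)$ and splitting
\[
\kappa(x_\alpha,\nu_\alpha)-\kappa(x_0,\nu_0)=\bigl[\kappa(x_\alpha,\nu_\alpha)-\kappa(x_0,\nu_\alpha)\bigr]+\bigl[\kappa(x_0,\nu_\alpha)-\kappa(x_0,\nu_0)\bigr],
\]
then bounding each bracket by a quantity that can be made arbitrarily small. Since $x_0\in Q\subset F^c$ and $F$ is closed, local compactness lets me fix a compact neighborhood $V_{x_0}$ of~$x_0$ disjoint from~$F$. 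Given $\varepsilon>0$, property $(\infty_\mathrm X)$ applied with $K=V_{x_0}$ produces a compact $K'\subset\mathrm X$ such that $|\kappa(x,y)|<\varepsilon$ whenever $x\in V_{x_0}$ and $y\in\mathrm X\setminus K'$.

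For the first bracket I would exploit that $\kappa$ is continuous on the compact product $V_{x_0}\times(K'\cap F)$, where the diagonal is never hit because $V_{x_0}\cap F=\varnothing$. A standard finite-cover argument (for each $y\in K'\cap F$ pick a product neighborhood on which $\kappa$ varies by less than $\varepsilon/2$, extract a finite cover of $K'\cap F$, and intersect the $x$-factors) delivers an open neighborhood $U$ of~$x_0$ such that $|\kappa(x,y)-\kappa(x_0,y)|<\varepsilon$ uniformly in $(x,y)\in U\times(K'\cap F)$. Combined with the $(\infty_\mathrm X)$-tail bound for $y\in F\setminus K'$, this yields $|\kappa(x_\alpha,\nu_\alpha)-\kappa(x_0,\nu_\alpha)|\leqslant 3\varepsilon b$ for $\alpha$ large.

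For the second bracket I would use vague convergence against a compactly supported continuous cut-off $\phi:\mathrm X\to[0,1]$ with $\phi=1$ on $K'\cap F$ and $\mathrm{supp}\,\phi$ disjoint from $V_{x_0}$; such a $\phi$ is delivered by Urysohn's lemma in the locally compact Hausdorff space $\mathrm X$, because the compact sets $V_{x_0}$ and $K'\cap F$ are disjoint. Since $x_0\notin\mathrm{supp}\,\phi$, the function $y\mapsto\phi(y)\kappa(x_0,y)$ is continuous with compact support, and vague convergence gives $\int\phi\,\kappa(x_0,\cdot)\,d\nu_\alpha\to\int\phi\,\kappa(x_0,\cdot)\,d\nu_0$. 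The residual $\int(1-\phi)\kappa(x_0,\cdot)\,d\nu$, for any $\nu\in\mathfrak M^+_F\cap\mathfrak M_b$, is supported on $F\setminus K'$ (because $\phi\equiv1$ on $K'\cap F$ and $\nu$ is concentrated on $F$) and thus bounded in modulus by $\varepsilon b$; so the second bracket is at most $2\varepsilon b+o(1)$.

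Combining the two estimates, $\limsup_\alpha|\kappa(x_\alpha,\nu_\alpha)-\kappa(x_0,\nu_0)|\leqslant 5\varepsilon b$, and letting $\varepsilon\downarrow0$ closes the argument. The main obstacle I anticipate is the coordinated choice of the data: one must arrange $V_{x_0}$, the compact $K'$ from $(\infty_\mathrm X)$, and the Urysohn cut-off $\phi$ so that $V_{x_0}\cap F=\varnothing$, the $(\infty_\mathrm X)$-tail falls outside $K'$ for all $x\in V_{x_0}$, and $\mathrm{supp}\,\phi$ is disjoint from $V_{x_0}$ while still covering $K'\cap F$. All of this is secured by local compactness and Hausdorff separation of the disjoint compact sets $V_{x_0}$ and $K'\cap F$; once these geometric choices are in place, the rest is a routine $\varepsilon$-bookkeeping.
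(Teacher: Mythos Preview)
Your argument is correct. Both your proof and the paper's follow the same overarching idea---use property~$(\infty_{\mathrm X})$ to control the tail of the integral outside a compact set, then exploit continuity of~$\kappa$ on the remaining compact piece together with vague convergence---but they organize the work differently. You split $\kappa(x_\alpha,\nu_\alpha)-\kappa(x_0,\nu_0)$ into an ``$x$-variation'' bracket and a ``$\nu$-variation'' bracket, handling the first by a finite-cover (tube-lemma) argument on $V_{x_0}\times(K'\cap F)$ and the second by testing vague convergence against a one-variable Urysohn cutoff~$\phi(y)$ with $\phi\equiv1$ on $K'\cap F$ and $\mathrm{supp}\,\phi$ disjoint from~$V_{x_0}$. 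The paper instead constructs a single two-variable function $\varphi\in C_0(W_{x_0}\times F)$ via the Tietze--Urysohn extension theorem that agrees with~$\kappa$ on $W_{x_0}\times(W\cap F)$ and is uniformly small elsewhere; it then reads off both the $x$-uniformity (from compactness of $\mathrm{supp}\,\varphi$) and the test function $\varphi(x_0,\cdot)\in C_0(F)$ for vague convergence from this one object. Your route is a bit more elementary in that it avoids the Tietze extension and keeps the two sources of variation cleanly separated; the paper's is slightly more unified, treating the $x$- and $\nu$-variations simultaneously through the single extension~$\varphi$.
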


\begin{proof}Fix $x_0,x_s\in Q$ and $\nu_0,\nu_s\in\mathfrak
M^+_F\cap\mathfrak M_b$, $s\in S$, such that
$(x_s,\nu_s)\to(x_0,\nu_0)$ (as $s$ ranges along~$S$) in the
topology of the product space $Q\times\bigl(\mathfrak
M^+_F\cap\mathfrak M_b\bigr)$. We need to show that
\begin{equation}\label{cont1}\kappa(x_0,\nu_0)=\lim_{s\in
S}\,\kappa(x_s,\nu_s).\end{equation}

Due to the property $(\infty_\mathrm X)$, for any  $\varepsilon>0$
one can choose a compact neighborhood~$W_{x_0}$ of the point~$x_0$
and a compact neighborhood~$W$ of the set~$W_{x_0}$ so that
$W_{x_0}\cap F=\varnothing$ and
\begin{equation}\label{115}
|\kappa(x,y)|<\varepsilon b^{-1}\quad\text{for all \ }(x,y)\in
W_{x_0}\times W^c.\end{equation}  Certainly, there is no loss of
generality in assuming $V:=W\cap F\ne\varnothing$.

Let $E^{c_F}$ and~$\partial_FE$ denote respectively the complement
and the boundary of~$E$ relative to~$F$, where $F$ is treated as a
topological subspace of~$\mathrm X$. Having observed that
$\kappa|_{W_{x_0}\times F}$ is continuous, we proceed by
constructing a function $\varphi\in\mathrm C_0(W_{x_0}\times F)$
such that
\begin{equation}
\varphi|_{W_{x_0}\times V}=\kappa|_{W_{x_0}\times V},\label{rest}
\end{equation}
\begin{equation}
\left|\varphi(x,y)\right|\leqslant\varepsilon b^{-1}\quad\mbox{for
all \ } (x,y)\in W_{x_0}\times V^{c_F}.\label{118}
\end{equation}

To this end, consider a compact neighborhood~$V_*$ of~$V$ in~$F$ and
write \[ g:=\left\{
\begin{array}{cl} \kappa & \mbox{ \ on \ }
W_{x_0}\times\partial_FV,\\ 0 & \mbox{ \ on \ }
W_{x_0}\times\partial_FV_*.\\ \end{array} \right. \] Note that
$K:=(W_{x_0}\times\partial_FV)\cup (W_{x_0}\times\partial_FV_*)$ is
a compact subset of the Hausdorff and compact (hence, normal) space
$W_{x_0}\times V_*$, while the function~$g$ is continuous on~$K$. By
using the Tietze--Urysohn extension theorem (see, e.g.,
\cite[Chapter~0]{E2}), we deduce from relation~(\ref{115}) that
there exists a continuous function $\tilde{g}: \ W_{x_0}\times
V_*\to[-\varepsilon b^{-1},\varepsilon b^{-1}]$ such that
$\tilde{g}|_K=g|_K$. Hence, the function in question can be defined
by means of the formula \[ \varphi:=\left\{
\begin{array}{cl} \kappa & \mbox{ \ on \ }
W_{x_0}\times V,\\
\tilde{g} & \mbox{ \ on \ } W_{x_0}\times(V_*\setminus V),\\ 0 &
\mbox{ \ on \ } W_{x_0}\times V_*^{c_F}.
\end{array}
\right.\]

Furthermore, since such a function $\varphi$ is continuous on
$W_{x_0}\times F$ and has compact support, one can choose a compact
neighborhood~$U_{x_0}$ of~$x_0$ in~$W_{x_0}$ so that
\begin{equation}
\left|\varphi(x,y)-\varphi(x_0,y)\right|<\varepsilon
b^{-1}\quad\mbox{for all \ } (x,y)\in U_{x_0}\times F.\label{119}
\end{equation}
Therefore, for any $\nu\in\mathfrak M^+_F\cap\mathfrak M_b$ and
$x\in U_{x_0}$ we get, due to
relations~\mbox{(\ref{115})--(\ref{119})},
\begin{equation}
|\kappa(x,\nu|_{W^c})|\leqslant\varepsilon,\label{121}
\end{equation}
\begin{equation}
\kappa(x,\nu|_{W})=\int\varphi(x,y)\,d(\nu-\nu|_{W^c})(y),\label{122}
\end{equation}
\begin{equation}
\Bigl|\int\varphi(x,y)\,d\nu|_{W^c}(y)\Bigr|\leqslant\varepsilon,\label{123}
\end{equation}
\begin{equation}
\Bigl|\int
\bigl[\varphi(x,y)-\varphi(x_0,y)\bigr]\,d\nu(y)\Bigr|\leqslant
\varepsilon.\label{124}
\end{equation}

Choose $s_0\in S$ so that for all $s\in S$ that follow~$s_0$ there
hold $x_s\in U_{x_0}$ and
\[\Bigl|\int\varphi(x_0,y)\,d(\nu_s-\nu_0)(y)\Bigr|<\varepsilon.\]
Combined with relations (\ref{121})--(\ref{124}), this shows that
for all $s\geqslant s_0$,
\begin{equation*}
\begin{split}
|\kappa(x_s,\nu_s)-\kappa(x_0,\nu_0)|&\leqslant|\kappa(x_s,\nu_s|_{W})-
\kappa(x_0,\nu_0|_{W})|+2\varepsilon\\
{}&\leqslant\Bigl|\int\varphi(x_s,y)\,d\nu_s(y)-
\int\varphi(x_0,y)\,d\nu_0(y)\Bigr|+4\varepsilon\\
{}&\leqslant\Bigl|\int\bigl[\varphi(x_s,y)-\varphi(x_0,y)\bigr]\,d\nu_s(y)\Bigr|+\Bigl|\int
\varphi(x_0,y)\,d(\nu_s-\nu_0)(y)\Bigr|+4\varepsilon\\
{}&\leqslant\varepsilon+\varepsilon+4\varepsilon=6\varepsilon,
\end{split}
\end{equation*}
which in view of the arbitrary choice of $\varepsilon$
establishes~(\ref{cont1}).
\end{proof}

\subsection{Orthogonal projections in $\mathcal E$}\label{sec:proj}

Throughout this section we require the kernel $\kappa$ to be perfect
(see Definition~\ref{def:const}).

Fix $\nu\in\mathcal E$ and a closed set $F\subset\mathrm X$ with
$C(F)>0$, and let $P_F$ be the operator of {\it orthogonal
projection\/} in the pre-Hilbert space~$\mathcal E$ onto the convex
cone~$\mathcal E^+_F$ (see \cite[Section~1.12.3]{E2}; note that
$\mathcal E^+_F\ne\varnothing$ due to Lemma~\ref{Lemma2.3.1}). Then
$P_F\nu$ is a measure in $\mathcal E^+_F$ such that
\[\|\nu-P_F\nu\|=
\inf_{\omega\in\mathcal E^+_F}\,\|\nu-\omega\|=:\varrho(\nu,\mathcal
E^+_F).\] Observe that $\mathcal E^+_F$, treated as a metric
subspace of~$\mathcal E$, is complete in consequence
of~Theorem~\ref{th:1}; hence, according
to~\cite[Theorem~1.12.3]{E2}, $P_F\nu$ {\it exists and is determined
uniquely}.

\begin{lemma}\label{lemma:hryu} If\/
$K$ ranges through the increasing filtering family\/ $\{K\}_F$ of
all compact subsets of\/~$F$, then
\begin{equation*}P_K\nu\to P_F\nu\quad\text{strongly and vaguely}.\end{equation*}
\end{lemma}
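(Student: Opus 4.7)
The strategy is to deduce a quantitative comparison between $P_K\nu$ and $P_F\nu$ from the minimum-norm characterization of projections, and then to reduce the resulting distance bound to a pure strong-approximation statement for restrictions of a positive measure to compact sets. Applying Lemma~\ref{lemma:convex} to the convex set $\mathcal{E}^+_F-\nu$ (whose unique minimum-norm element is $P_F\nu-\nu$), I would obtain, for every $\omega\in\mathcal{E}^+_F$,
\begin{equation*}
\|\omega-P_F\nu\|^2\leqslant\|\omega-\nu\|^2-\|P_F\nu-\nu\|^2.
\end{equation*}
Specializing to $\omega:=P_K\nu\in\mathcal{E}^+_K\subset\mathcal{E}^+_F$ yields
\begin{equation*}
\|P_K\nu-P_F\nu\|^2\leqslant\varrho(\nu,\mathcal{E}^+_K)^2-\varrho(\nu,\mathcal{E}^+_F)^2,
\end{equation*}
so strong convergence $P_K\nu\to P_F\nu$ reduces to showing $\varrho(\nu,\mathcal{E}^+_K)\downarrow\varrho(\nu,\mathcal{E}^+_F)$ along the filtering family $\{K\}_F$.

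The lower bound $\varrho(\nu,\mathcal{E}^+_K)\geqslant\varrho(\nu,\mathcal{E}^+_F)$ and the monotonicity of $K\mapsto\varrho(\nu,\mathcal{E}^+_K)$ are immediate from $\mathcal{E}^+_K\subset\mathcal{E}^+_F$. For the matching upper bound I would test the infimum against the explicit admissible competitor $(P_F\nu)|_K\in\mathcal{E}^+_K$, giving
\begin{equation*}
\varrho(\nu,\mathcal{E}^+_K)\leqslant\|\nu-(P_F\nu)|_K\|\leqslant\|\nu-P_F\nu\|+\|P_F\nu-(P_F\nu)|_K\|.
\end{equation*}
Thus the entire statement reduces to the strong-approximation claim that $\omega|_K\to\omega$ strongly along $\{K\}_F$ for every $\omega\in\mathcal{E}^+_F$.

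This last step is where the main difficulty lies. If $\mathrm{X}$ is compact then so is $F$, hence $F\in\{K\}_F$ and the assertion is trivial; so I would assume $\mathrm{X}$ non-compact, in which case $\kappa\geqslant 0$ by the very definition of a kernel. Since $\omega-\omega|_K=\omega|_{F\setminus K}$, positivity of $\kappa$ together with $\omega|_{F\setminus K}\leqslant\omega$ gives
\begin{equation*}
\|\omega-\omega|_K\|^2=\int_{F\setminus K}\kappa(x,\omega|_{F\setminus K})\,d\omega(x)\leqslant\int_{F\setminus K}\kappa(x,\omega)\,d\omega(x),
\end{equation*}
and this quantity decreases monotonically as $K$ grows in $\{K\}_F$. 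Now $\int_F\kappa(\cdot,\omega)\,d\omega=\|\omega\|^2<\infty$, so the positive measure $\kappa(\cdot,\omega)\,d\omega$ is a finite Radon measure on $F$; inner regularity produces, for any $\varepsilon>0$, a compact $K_0\subset F$ with $\int_{F\setminus K_0}\kappa(\cdot,\omega)\,d\omega<\varepsilon$, and the monotonicity then yields $\|\omega-\omega|_K\|^2<\varepsilon$ for all $K\supset K_0$ in $\{K\}_F$, as required.

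Finally, vague convergence comes for free once strong convergence is established: since $\kappa$ is perfect, Theorem~\ref{th:1} ensures that the strong topology on $\mathcal{E}^+$ is finer than the vague one, and both $P_K\nu$ and $P_F\nu$ lie in $\mathcal{E}^+$.
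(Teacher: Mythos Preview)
Your argument is correct and follows the same overall strategy as the paper: both reduce to showing $\varrho(\nu,\mathcal E^+_K)\downarrow\varrho(\nu,\mathcal E^+_F)$ by testing against the competitor $(P_F\nu)|_K$, and both invoke Lemma~\ref{lemma:convex}. There is, however, a genuine streamlining in your version. The paper applies Lemma~\ref{lemma:convex} to the sets $\nu-\mathcal E^+_{\widehat K}$ for pairs $K\subset\widehat K$, obtains that $(P_K\nu)_K$ is strongly Cauchy, and then appeals to the strong completeness of $\mathcal E^+_F$ (a consequence of perfectness) to produce a limit $\omega_0$, which it finally identifies with $P_F\nu$ via uniqueness. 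You instead apply Lemma~\ref{lemma:convex} once, to $\mathcal E^+_F-\nu$ with its known minimizer $P_F\nu-\nu$, and read off the direct estimate $\|P_K\nu-P_F\nu\|^2\leqslant\varrho(\nu,\mathcal E^+_K)^2-\varrho(\nu,\mathcal E^+_F)^2$; this bypasses the Cauchy--completeness step entirely. For the approximation $(P_F\nu)|_K\to P_F\nu$, the paper quotes Fuglede's \cite[Lemma~1.2.2]{F1} to handle the relevant integrals separately, whereas your inner-regularity argument (after the case split that secures $\kappa\geqslant 0$) gives the same conclusion by estimating $\|\omega-\omega|_K\|^2$ directly. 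Both routes are sound; yours is shorter and does not use completeness, while the paper's has the minor advantage that its approximation step works without the sign assumption on~$\kappa$.
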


\begin{proof} For each $K\in\{K\}_F$ one can certainly assume
$C(K)>0$, which causes no loss of generality because of
$C(F)=\sup_{K\subset F}\,C(K)$ (see~\cite[p.~153]{F1}); hence, the
projection~$P_K\nu$ exists (and is unique). We next observe that
$\varrho(\nu,\mathcal E^+_K)$ decreases as $K\uparrow F$ and
\begin{equation}\label{preg}\varrho(\nu,\mathcal E^+_F)\leqslant
\lim_{K\uparrow F}\,\varrho(\nu,\mathcal E^+_K).\end{equation} On
the other hand, applying \cite[Lemma~1.2.2]{F1} to the measure
$P_F\nu\otimes P_F\nu$ and the function~$\kappa$, as well as to
$P_F\nu$ and each of~$\kappa(\cdot,\nu^+)$
and~$\kappa(\cdot,\nu^-)$, we obtain
\[\|P_F\nu\|=\lim_{K\uparrow
F}\,\|(P_F\nu)|_K\|\quad\text{and}\quad\kappa(\nu,P_F\nu)=\lim_{K\uparrow
F}\,\kappa(\nu,(P_F\nu)|_K),\]  therefore
\[\varrho(\nu,\mathcal E^+_F)=\|\nu-P_F\nu\|=\lim_{K\uparrow F}\,\|\nu-(P_F\nu)|_K\|\geqslant
\lim_{K\uparrow F}\,\varrho(\nu,\mathcal E^+_K).\] When combined
with relation~(\ref{preg}), this establishes the equality
\begin{equation}\label{preg1}\varrho(\nu,\mathcal E^+_F)=\lim_{K\uparrow F}\,\varrho(\nu,\mathcal E^+_K).
\end{equation}

Fix $K,\widehat{K}\in\{K\}_F$ such that $K\subset\widehat K$.
Applying Lemma~\ref{lemma:convex} to the (convex) set
\[\nu-\mathcal E^+_{\widehat K}:=\Bigl\{\nu-\omega: \
\omega\in\mathcal E^+_{\widehat K}\Bigr\},\] in view of
$\nu-P_K\nu\in\nu-\mathcal E^+_{\widehat K}$  we get
\[\bigl\|P_{\widehat K}\nu-P_K\nu\bigr\|^2=\bigl\|(\nu-P_{\widehat K}\nu)-
(\nu-P_K\nu)\bigr\|^2\leqslant\varrho^2(\nu,\mathcal
E^+_K)-\varrho^2(\nu,\mathcal E^+_{\widehat K}).\] As
$\varrho(\nu,\mathcal E^+_K)$, $K\in\{K\}_F$, is fundamental
in~$\mathbb R$ because of~(\ref{preg1}), the last relation shows
that $(P_K\nu)_{K\in\{K\}_F}$ is a strong Cauchy net in~$\mathcal
E^+_F$. Since $\mathcal E^+_F$ is strongly complete, this net
converges to some $\omega_0\in\mathcal E^+_F$ strongly and, hence,
weakly. Repeated application of (\ref{preg1}) then yields
\[\lim_{K\uparrow F}\,\varrho(\nu,\mathcal E^+_K)=\lim_{K\uparrow F}\,\|\nu-P_K\nu\|=\|\nu-\omega_0\|=
\varrho(\nu,\mathcal E^+_F),\] which due to the uniqueness statement
gives $\omega_0=P_F\nu$, and the lemma follows.
\end{proof}

As an immediate consequence of Lemmas~\ref{lemma:lower}
and~\ref{lemma:hryu}, we get
\begin{align}
\label{proj:masses:lower}P_F\nu(\mathrm
X)&\leqslant\liminf_{K\uparrow F}P_K\nu(\mathrm X),\\
\label{proj:pot:lower}\kappa(x,P_F\nu)&\leqslant\liminf_{K\uparrow
F}\,\kappa(x,P_K\nu)\quad\text{n.e.~in \ }\mathrm X.\end{align}

\begin{lemma}\label{lemma:proj.desc} It holds that
\begin{equation}\label{orth1}
\kappa(x,P_F\nu)\geqslant\kappa(x,\nu)\quad\text{n.e.~in \
}F.\end{equation} If, moreover, $\kappa(x,y)$ is continuous for\/
$x\ne y$, $\nu^+$ is bounded, $S(\nu^+)\cap F=\varnothing$ and
\begin{equation*}\sup_{x\in K,\,y\in
S(\nu^+)}\,\kappa(x,y)<\infty\quad\text{for every compact \ }
K\subset F,\end{equation*} then
\begin{equation}\label{orth2}\kappa(x,P_F\nu)\leqslant\kappa(x,\nu)\quad\text{for
all \ }x\in S(P_F\nu)
\end{equation}
and therefore
\begin{equation}\label{orth3}
\kappa(x,P_F\nu)=\kappa(x,\nu)\quad \text{n.e.~in \
}S(P_F\nu).\end{equation}
\end{lemma}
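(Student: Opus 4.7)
The plan is to exploit the variational characterization $\|\nu - P_F\nu\| = \inf_{\omega\in\mathcal E^+_F}\,\|\nu - \omega\|$ by means of two one-parameter perturbations of~$P_F\nu$ inside $\mathcal E^+_F$, translating each resulting first-order inequality into its pointwise counterpart.

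\emph{Upward perturbation; the inequality~(\ref{orth1}).} For arbitrary $\sigma\in\mathcal E^+_F$ and $t>0$, the measure $P_F\nu+t\sigma$ still lies in $\mathcal E^+_F$, so expanding minimality and dividing by~$t$ before letting $t\downarrow 0$ yields
\[\kappa(\nu-P_F\nu,\sigma)\leqslant 0\quad\text{for every }\sigma\in\mathcal E^+_F.\]
To upgrade this integrated inequality to the pointwise n.e.\ bound on~$F$, I will argue by contradiction. If the Borel set $N_\varepsilon:=\{x\in F:\kappa(x,\nu)-\kappa(x,P_F\nu)>\varepsilon\}$ had $C(N_\varepsilon)>0$ for some $\varepsilon>0$, the definition of interior capacity would supply $\sigma\in\mathcal E^+_{N_\varepsilon}\subset\mathcal E^+_F$ with $\sigma(N_\varepsilon)=1$, and testing the integrated inequality against this~$\sigma$ would give $0\geqslant\int(\kappa(\cdot,\nu)-\kappa(\cdot,P_F\nu))\,d\sigma\geqslant\varepsilon$, a contradiction. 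This capacity-to-pointwise extraction is what I expect to be the main technical step; its justification rests on Cauchy--Schwarz to secure $\sigma$-integrability of each potential and on $N_\varepsilon$ being Borel up to a set of capacity zero (as a difference of l.s.c.\ potentials).

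\emph{Downward perturbation; an integral identity on compact subsets.} For each compact $B\subset F$ and $0<t\leqslant 1$, the decomposition $P_F\nu-t(P_F\nu)|_B=(1-t)(P_F\nu)|_B+(P_F\nu)|_{B^c}$ shows that this perturbation also lies in $\mathcal E^+_F$; minimality then gives $\kappa(\nu-P_F\nu,(P_F\nu)|_B)\geqslant 0$, while the integrated inequality above applied with $\sigma=(P_F\nu)|_B$ provides the reverse. Hence
\[\int\bigl(\kappa(x,P_F\nu)-\kappa(x,\nu)\bigr)\,d(P_F\nu)|_B(x)=0\quad\text{for every compact }B\subset F.\]
Since $(P_F\nu)|_B$ is bounded (as $B$ is compact) and the integrand is $\geqslant 0$ n.e.\ on~$F$ by (\ref{orth1}), Lemma~\ref{a.e.} forces it to be $\geqslant 0$ $(P_F\nu)|_B$-a.e.; combined with the vanishing integral, it must equal zero $(P_F\nu)|_B$-a.e.

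\emph{Continuity upgrades a.e.\ to everywhere on $S(P_F\nu)$.} Under the additional hypotheses, Lemma~\ref{lemma:vague.cont2} applies to $\nu^+$ at every point of~$F$ (since $S(\nu^+)\cap F=\varnothing$ and the sup bound holds there), so $\kappa(\cdot,\nu^+)$ is continuous on~$F$. Consequently,
\[g(x):=\kappa(x,P_F\nu)-\kappa(x,\nu)=\kappa(x,P_F\nu)+\kappa(x,\nu^-)-\kappa(x,\nu^+)\]
is lower semicontinuous on~$F$, being the sum of two l.s.c.\ potentials and a continuous function. If $g(x_0)>0$ at some $x_0\in S(P_F\nu)\subset F$, then $\{g>0\}\cap F$ is open in~$F$ and contains~$x_0$, hence carries positive $P_F\nu$-mass by definition of the support; choosing a compact neighborhood $B$ of~$x_0$ in~$F$ would then contradict the a.e.\ vanishing established above. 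Therefore $g\leqslant 0$ on $S(P_F\nu)$, which is~(\ref{orth2}), and (\ref{orth3}) follows at once from (\ref{orth1}) and (\ref{orth2}) since $S(P_F\nu)\subset F$.
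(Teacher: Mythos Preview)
Your argument is correct and follows the same route as the paper's: both obtain the variational inequality $\kappa(\nu-P_F\nu,\omega)\leqslant 0$ for all $\omega\in\mathcal E^+_F$ together with the complementary equality $\kappa(\nu-P_F\nu,P_F\nu)=0$, upgrade the former to the n.e.\ bound~(\ref{orth1}), combine them to get $\kappa(\cdot,P_F\nu)=\kappa(\cdot,\nu)$ a.e.\ with respect to (a restriction of)~$P_F\nu$, and then pass to every point of~$S(P_F\nu)$ using the continuity of $\kappa(\cdot,\nu^+)$ on~$F$ furnished by Lemma~\ref{lemma:vague.cont2}. The only cosmetic differences are that the paper cites \cite[Proposition~1.12.4]{E2} for the variational characterization rather than deriving it by perturbation, uses a trace $\omega|_E$ in place of your $\varepsilon$-level sets for~(\ref{orth1}), and first reduces to compact~$F$ via~(\ref{proj:pot:lower}) rather than localizing to compact $B\subset F$ as you do.
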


\begin{proof}
According to \cite[Proposition~1.12.4]{E2}, $P_F\nu$ is uniquely
characterized by the relations
\begin{align}\label{orth4}
\kappa(\nu-P_F\nu,\omega)&\leqslant0\quad\text{for all \ }
\omega\in\mathcal E^+_F,\\
\label{orth5}\kappa(\nu-P_F\nu,P_F\nu)&=0,
\end{align}
We observe that assertion~(\ref{orth1}) can be obtained from
inequality~(\ref{orth4}) with the help of arguments similar to those
in \cite[Proof of Theorem~4.16]{L}. Indeed, for each
$\omega\in\mathcal E^+_F$ the set
\begin{equation*}\label{q}E:=\Bigl\{x\in F:
\ \kappa(x,P_F\nu)<\kappa(x,\nu)\Bigr\}\end{equation*} is
$\omega$-measurable and, hence, one can consider $\omega|_E$, the
trace of~$\omega$ on~$E$. Since $\omega|_E$ is an element of
$\mathcal E^+_F$ as well, relation~(\ref{orth4}) gives
\[\int\kappa(x,\nu-P_F\nu)\,d\omega|_E(x)\leqslant0,\]
which, however, is possible only provided that $E$ is locally
$\omega$-negligible. In view of the arbitrary choice of
$\omega\in\mathcal E^+_F$, this together with Lemma~\ref{Lemma2.3.1}
yields $C(E)=0$ as desired.

Let now all the hypotheses of the second part of the lemma  be
satisfied. It follows from inequality~(\ref{proj:pot:lower}) that,
while proving assertion~(\ref{orth2}), one can assume $F$ to be
compact. Then, by Lemma~\ref{a.e.}, relation~(\ref{orth1}) holds
$P_F\nu$-a.e. in~$\mathrm X$. This implies
\[\kappa(x,\nu-P_F\nu)=0\quad
P_F\nu\text{-a.e.~in \ }\mathrm X,\] for if not, we would arrive at
a contradiction with~(\ref{orth5}) when integrating
inequality~(\ref{orth1}) with respect to~$P_F\nu$. In turn, the last
relation yields that for every $x\in S(P_F\nu)$ one can choose a net
$(x_s)_{s\in S}\subset F$ with the properties that $x_s\to x$ and
\[\kappa(x_s,\nu-P_F\nu)=0\quad\text{for all \ }s\in S.\]
Hence, assertion (\ref{orth2}) will be proved once we show that
$\kappa(x,\nu-P_F\nu)$ is upper semicontinuous on~$F$. As
$\kappa(x,\nu^-+P_F\nu)$ is lower semicontinuous on~$\mathrm X$, it
is enough to establish the continuity of $\kappa(x,\nu^+)$ on~$F$,
but this is a direct consequence of
Lemma~\ref{lemma:vague.cont2}.
\end{proof}

\begin{definition}{\rm(see, e.g., \cite{L})}\label{def:h} A kernel $\kappa$ satisfies the {\it generalized maximum principle\/}
with a constant $h$ if for every finite $\nu\in\mathfrak M^+$ with
the property
\[\sup_{x\in S(\nu)}\,\kappa(x,\nu)=:M<\infty\] one has
$\kappa(x,\nu)\leqslant hM$ for all $x\in\mathrm X$.
\end{definition}

\begin{lemma}\label{lemma:bound}Suppose that the kernel\/~$\kappa$ is\/ ${}\geqslant0$ and satisfies the
generalized maximum principle with a constant\/~$h$.\footnote{Then,
obviously, $h\geqslant 1$.} If, moreover, all the assumptions of
Lemma {\rm\ref{lemma:proj.desc}} hold true, then
\begin{equation*}
P_F\nu(\mathrm X)\leqslant h\nu^+(\mathrm X).\end{equation*}
\end{lemma}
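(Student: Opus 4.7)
The plan is to reduce to the case of compact $F$ by means of Lemma~\ref{lemma:hryu} and inequality~(\ref{proj:masses:lower}), and then to exploit the interior equilibrium measure of the support $K_0 := S(P_K\nu)$ as a test measure. By vague lower semicontinuity of the total mass, $P_F\nu(\mathrm X) \leqslant \liminf_{K\uparrow F} P_K\nu(\mathrm X)$, so it suffices to prove $P_K\nu(\mathrm X) \leqslant h\nu^+(\mathrm X)$ for each compact $K \subset F$. Assuming $P_K\nu \neq 0$ (otherwise the claim is trivial), $K_0$ is a nonempty compact subset of $K$ of positive interior capacity (by Lemma~\ref{Lemma2.3.1}, since $P_K\nu \in \mathcal E^+_{K_0}$), so its equilibrium measure $\theta := \theta_{K_0}$ is provided by Theorem~\ref{thF:1}.

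The central computation is a double use of Fubini's theorem (justified throughout by $\kappa \geqslant 0$). First, combining $\kappa(\cdot,\theta) \geqslant 1$ n.e.\ in $K_0$ with Lemma~\ref{a.e.} (applicable since $P_K\nu$ is bounded and concentrated on $K_0$), one obtains
\[
P_K\nu(\mathrm X) \leqslant \int \kappa(x,\theta)\,dP_K\nu(x) = \int \kappa(x,P_K\nu)\,d\theta(x).
\]
Next, I apply Lemma~\ref{lemma:proj.desc} to $P_K\nu$ with $F$ replaced by the compact set $K$; its hypotheses transfer automatically because $K \subset F$ ensures $S(\nu^+)\cap K = \varnothing$ and the boundedness of $\kappa$ on products of compact subsets of $K$ with $S(\nu^+)$. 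Combining (\ref{orth2}) with $\kappa \geqslant 0$ yields the pointwise bound $\kappa(x,P_K\nu) \leqslant \kappa(x,\nu^+)$ for every $x \in S(P_K\nu) = K_0 \supset S(\theta)$, hence
\[
\int \kappa(x,P_K\nu)\,d\theta(x) \leqslant \int \kappa(x,\nu^+)\,d\theta(x) = \int \kappa(x,\theta)\,d\nu^+(x).
\]
Finally, $\theta$ is a finite measure with $\kappa(\cdot,\theta) \leqslant 1$ on $S(\theta)$, so the generalized maximum principle of Definition~\ref{def:h} gives $\kappa(x,\theta) \leqslant h$ for every $x \in \mathrm X$; the last integral is therefore $\leqslant h\nu^+(\mathrm X)$, and chaining the three estimates concludes the argument.

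The crux---and the step whose choice is least mechanical---is taking the equilibrium measure of $K_0 = S(P_K\nu)$ rather than of $K$ itself. This is precisely what guarantees the inclusion $S(\theta) \subset S(P_K\nu)$, so that the domination $\kappa(x,P_K\nu) \leqslant \kappa(x,\nu^+)$ of Lemma~\ref{lemma:proj.desc}, which is available only on $S(P_K\nu)$, can be integrated against $\theta$. Everything else is routine: verifying that Lemma~\ref{lemma:proj.desc} still applies with $K$ in place of $F$, and passing from compact $K$ back to $F$ via Lemma~\ref{lemma:hryu}.
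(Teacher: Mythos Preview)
Your proof is correct and follows essentially the same route as the paper: reduce to compact $F$ via~(\ref{proj:masses:lower}), take the equilibrium measure~$\theta$ of $S(P_F\nu)$, and chain the estimates $P_F\nu(\mathrm X)\leqslant\kappa(\theta,P_F\nu)\leqslant\kappa(\theta,\nu^+)\leqslant h\nu^+(\mathrm X)$ using Lemma~\ref{lemma:proj.desc} and the generalized maximum principle. The only cosmetic difference is that the paper invokes the equality~(\ref{orth3}) $\theta$-a.e.\ and then drops the $\nu^-$-term, whereas you use the pointwise inequality~(\ref{orth2}) on $S(P_F\nu)\supset S(\theta)$ directly; both are equivalent here.
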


\begin{proof}It follows from inequality~(\ref{proj:masses:lower}) that, while
proving the lemma, one can assume~$F$ to be compact; then
$C(F)<\infty$ ($\kappa$ being strictly positive definite). Hence,
according to Theorem~\ref{thF:1}, an equilibrium measure
$\theta=\theta_{S(P_F\nu)}$ of~$S(P_F\nu)$ exists and satisfies the
relations
\begin{align} \label{equ:1}
\kappa(x,\theta)&\geqslant1\quad\text{n.e.~in \ }S(P_F\nu),\\
\kappa(x,\theta)&\leqslant1\quad\text{for all \ }x\in
S(\theta).\label{equ:2}\end{align} Since then, due to the
boundedness of both~$P_F\nu$ and~$\theta$, relations~(\ref{orth3})
and (\ref{equ:1}) hold true $\theta$-a.e. and $P_F\nu$-a.e.,
respectively, on account of $\kappa\geqslant0$ we get
\begin{equation*}P_F\nu(\mathrm
X)\leqslant\int\kappa(x,\theta)\,dP_F\nu(x)=\int\kappa(x,\nu)\,d\theta(x)
\leqslant\int\kappa(x,\theta)\,d\nu^+(x).\end{equation*} On the
other hand, inequality~(\ref{equ:2}) yields, by
Definition~\ref{def:h},
\[\kappa(x,\theta)\leqslant h\quad\text{for all \ }x\in\mathrm X.\]
When substituted into the preceding
relation, this yields the lemma.
\end{proof}

\section{Vector measures associated with condensers; their energies and potentials. Strong completeness
theorem}\label{sec:cond}

Let $I^+$ and $I^-$ be fixed countable, disjoint sets of
indices~$i\in\mathbb N$, where the latter is allowed to be empty,
and let $I:=I^+\cup I^-$. Assume that to every $i\in I$ there
corresponds a nonempty, closed set~$A_i\subset\mathrm X$.

\begin{definition} A collection $\boldsymbol{A}=(A_i)_{i\in I}$ is
called an $(I^+,I^-)$-{\it condenser\/} (or simply a {\it
condenser\/}) in~$\mathrm X$ if every compact subset of~$\mathrm X$
intersects with at most finitely many~$A_i$ and
\begin{equation}
A_i\cap A_j=\varnothing\quad\mbox{for all \ } i\in I^+, \ j\in I^-.
\label{non}
\end{equation}\end{definition}

We call $\boldsymbol{A}$ {\it compact\/} if so are all~$A_i$, $i\in
I$, and {\it finite\/} if $I$ is finite. The sets $A_i$, $i\in I^+$,
and $A_j$, $j\in I^-$, are called the {\it positive\/} and {\it
negative plates}, respectively. (Note that any two equally sign\-ed
plates can intersect each other or even coincide.) For any
$I_0\subseteq I$, write
\[CI_0:=I\setminus I_0,\quad A_{I_0}:=\bigcup_{i\in I_0}\,A_i,\quad
A^+:=A_{I^+},\quad A^-:=A_{I^-},\quad A:=A_{I};\] then $A_{I_0}$ is
closed ($\boldsymbol{A}$ being locally finite), and it is compact if
$A_i$, $i\in I_0$, are compact while $I_0$ is finite.

Given $\boldsymbol{A}$, let $\mathfrak M^+(\boldsymbol{A})$ consist
of all (nonnegative) {\it vector measures\/}
$\boldsymbol\mu=(\mu^i)_{i\in I}$, where $\mu^i\in\mathfrak
M^+(A_i)$ for all $i\in I$. The product topology on~$\mathfrak
M^+(\boldsymbol{A})$, where every $\mathfrak M^+(A_i)$ is equipped
with the vague topology, is likewise called {\it vague\/}.

In accordance with an electrostatic interpretation of a condenser,
we assume that the law of interaction between the charges lying on
the plates~$A_i$, $i\in I$, is determined by the matrix
$(\alpha_i\alpha_j)_{i,j\in I}$, where $\alpha_i$ equals~$+1$ if
$i\in I^+$ and~$-1$ otherwise. Given
$\boldsymbol\mu,\boldsymbol\mu_1\in\mathfrak M^+(\boldsymbol{A})$,
we then define the {\it mutual energy\/}
$\kappa(\boldsymbol\mu,\boldsymbol\mu_1)$ and the {\it vector
potential\/} $\kappa_{\boldsymbol{\mu}}(x)$, $x\in\mathrm X$, by
relations~(\ref{vectoren}) and~(\ref{vectorpot}), respectively. For
$\boldsymbol\mu=\boldsymbol\mu_1$, the mutual energy
$\kappa(\boldsymbol\mu,\boldsymbol\mu_1)$ defines the {\it energy\/}
of~$\boldsymbol\mu$. Let $\mathcal E^+(\boldsymbol{A})$ consist of
all $\boldsymbol\mu\in\mathfrak M^+(\boldsymbol{A})$ with
$-\infty<\kappa(\boldsymbol\mu,\boldsymbol\mu)<\infty$.

From now on we shall always require the kernel $\kappa$ to be
positive definite. Then, according to~\cite[Lemma~3.3]{ZPot2}, for
$\boldsymbol\mu\in\mathfrak M^+(\boldsymbol{A})$ to have finite
energy, it is sufficient that $\sum_{i\in I}\,\|\mu^i\|<\infty$.

In this paper we are concerned with minimal energy problems over
subsets of~$\mathcal E^+(\boldsymbol{A})$, to be treated in the
frame of the approach developed in~\cite{ZPot2}. For the convenience
of the reader, in this and the next sections we summarize without
proof some results from~\cite{ZPot2}, necessary for the
understanding of the subject matter.

Due to the local finiteness of $\boldsymbol A$, one can define the
mapping $R:\mathfrak M^+(\boldsymbol{A})\to\mathfrak M$,
\[R\boldsymbol\mu(\varphi)=\sum_{i\in I}\,\alpha_i\mu^i(\varphi)\quad\mbox{for
all \ }\varphi\in\mathrm C_0(\mathrm X).\]  Such a mapping is, in
general, non-injective; it is injective if and only if $A_i$, $i\in
I$, are mutually disjoint. Also observe that $R\boldsymbol\mu$ is a
{\it signed\/} (scalar) measure; because of assumption~(\ref{non}),
the positive and negative parts in the Hahn--Jordan decomposition
of~$R\boldsymbol\mu$ are given by
\begin{equation}\label{HJ} R\boldsymbol\mu^+=\sum_{i\in
I^+}\,\mu^i\quad\text{and}\quad R\boldsymbol\mu^-=\sum_{i\in
I^-}\,\mu^i.\end{equation}

\begin{lemma}\label{lemma3.5}For any\/ $\psi\in\mathrm\Phi(\mathrm X)$ and\/
$\boldsymbol\mu\in\mathfrak M^+(\boldsymbol{A})$, the value\/
$\langle\psi,R\boldsymbol\mu\rangle$ is finite if and only if\/
$\sum_{i\in I}\,|\langle\psi,\mu^i\rangle|<\infty$, and then
\[\langle\psi,R\boldsymbol\mu\rangle=\sum_{i\in I}\,\alpha_i\langle\psi,\mu^i\rangle.\]
\end{lemma}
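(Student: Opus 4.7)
The plan is to reduce the lemma to the Hahn--Jordan identity~(\ref{HJ}) together with the standard fact that the integral of a nonnegative lower semicontinuous function commutes with a countable, locally finite sum of positive Radon measures.

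First I would dispose of the trivial case where $\mathrm X$ is compact: then local finiteness of $\boldsymbol A$ forces $I$ to be finite, $R\boldsymbol\mu=\sum_{i\in I}\alpha_i\mu^i$ is a finite linear combination of bounded measures, $\psi$ is bounded below, and the claim is immediate from linearity. So the substantive case is $\mathrm X$ non-compact, in which the definition of $\Phi(\mathrm X)$ gives $\psi\geqslant 0$, hence $\langle\psi,\mu^i\rangle\in[0,\infty]$ for each~$i$.

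Next, by~(\ref{HJ}) the positive and negative parts of the signed Radon measure $R\boldsymbol\mu$ are $R\boldsymbol\mu^+=\sum_{i\in I^+}\mu^i$ and $R\boldsymbol\mu^-=\sum_{i\in I^-}\mu^i$, both sums being locally finite because $\boldsymbol A$ is. By definition, $\langle\psi,R\boldsymbol\mu\rangle$ is finite if and only if both $\langle\psi,R\boldsymbol\mu^+\rangle$ and $\langle\psi,R\boldsymbol\mu^-\rangle$ are finite. For a nonnegative lower semicontinuous integrand and a countable locally finite sum $\nu=\sum_k\nu_k$ of positive Radon measures, monotone convergence applied to the increasing partial sums (or equivalently the definition of the upper integral as the supremum over minorants in $\mathrm C_0^+(\mathrm X)$, combined with the sup-of-sums identity) yields $\langle\psi,\nu\rangle=\sum_k\langle\psi,\nu_k\rangle$. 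Applying this to $R\boldsymbol\mu^{\pm}$ gives
\[\langle\psi,R\boldsymbol\mu^{\pm}\rangle=\sum_{i\in I^{\pm}}\langle\psi,\mu^i\rangle\in[0,\infty].\]

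Since all terms are nonnegative, the simultaneous finiteness of these two sums is equivalent to $\sum_{i\in I}\langle\psi,\mu^i\rangle<\infty$, which coincides with $\sum_{i\in I}|\langle\psi,\mu^i\rangle|<\infty$; this settles the ``finite iff'' part. Subtracting the two displays and using Lemma~\ref{lemma3.5}'s absolute-convergence clause (which permits rearrangement of the resulting series) produces $\langle\psi,R\boldsymbol\mu\rangle=\sum_{i\in I^+}\langle\psi,\mu^i\rangle-\sum_{i\in I^-}\langle\psi,\mu^i\rangle=\sum_{i\in I}\alpha_i\langle\psi,\mu^i\rangle$, as required. No genuine obstacle arises; the only point that needs a line of justification is the additivity of the $\psi$-integral over a countable sum of positive Radon measures, which is a routine consequence of monotone convergence once one recalls that local finiteness of $\boldsymbol A$ guarantees $\sum_{i\in I^{\pm}}\mu^i$ is itself Radon.
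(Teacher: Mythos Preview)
Your argument is correct. Note, however, that the paper does not actually supply a proof of this lemma: Section~\ref{sec:cond} explicitly states that the results therein are ``summarize[d] without proof'' from~\cite{ZPot2}, and Lemma~\ref{lemma3.5} is one of those quoted results. So there is no in-paper proof to compare against; your reduction via the Hahn--Jordan identity~(\ref{HJ}) together with countable additivity of the upper integral of a nonnegative l.s.c.\ function is the natural argument and is presumably what appears in~\cite{ZPot2}.
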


\begin{corollary}\label{lemma3.3}
$\boldsymbol\mu\in\mathfrak M^+(\boldsymbol{A})$ has finite energy
if and only if so does its scalar\/ $R$-image,
$R\boldsymbol\mu$.\footnote{This implies in particular that
$\mathcal E^+(\boldsymbol A)$ forms a {\it convex
cone}.}\end{corollary}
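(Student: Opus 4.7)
The plan is to deduce the equivalence from the identity
\[\kappa(|R\boldsymbol\mu|,|R\boldsymbol\mu|)=\sum_{i,j\in I}\kappa(\mu^i,\mu^j)\qquad\text{in \ }[0,\infty],\]
which bridges the two notions of energy through the Hahn--Jordan decomposition (\ref{HJ}). First, from (\ref{HJ}) and the disjointness assumption (\ref{non}) one reads off that $|R\boldsymbol\mu|=R\boldsymbol\mu^++R\boldsymbol\mu^-=\sum_{i\in I}\mu^i$, the countable sum being a well defined Radon measure because of the local finiteness of $\boldsymbol A$.

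In the only nontrivial case where $\mathrm X$ is noncompact (for $\mathrm X$ compact, local finiteness forces $I$ to be finite and the identity reduces to linearity), the requirement $\kappa\in\mathrm\Phi(\mathrm X\times\mathrm X)$ gives $\kappa\geqslant0$. Applying the monotone convergence theorem to the product measure $|R\boldsymbol\mu|\otimes|R\boldsymbol\mu|$ along the increasing net of finite subfamilies of $(\mu^i)_{i\in I}$ then yields
\[\kappa(|R\boldsymbol\mu|,|R\boldsymbol\mu|)=\int\kappa\,d\Bigl(\sum_{i\in I}\mu^i\otimes\sum_{j\in I}\mu^j\Bigr)=\sum_{i,j\in I}\kappa(\mu^i,\mu^j),\]
which is the two-variable analogue of Lemma~\ref{lemma3.5} in the nonnegative setting.

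Now I tie the pieces together. On the one hand, by the summation convention adopted in the footnote after (\ref{ag}) (the Riemann series theorem), $\kappa(\boldsymbol\mu,\boldsymbol\mu)=\sum_{i,j}\alpha_i\alpha_j\kappa(\mu^i,\mu^j)$ is a well defined finite number if and only if the double series converges absolutely; since $\kappa\geqslant0$, this is the same as $\sum_{i,j\in I}\kappa(\mu^i,\mu^j)<\infty$. On the other hand, writing $R\boldsymbol\mu=R\boldsymbol\mu^+-R\boldsymbol\mu^-$ and again using $\kappa\geqslant0$, one sees that $R\boldsymbol\mu\in\mathcal E$ if and only if each of the three nonnegative pairings $\kappa(R\boldsymbol\mu^\pm,R\boldsymbol\mu^\pm)$ and $\kappa(R\boldsymbol\mu^+,R\boldsymbol\mu^-)$ is finite, equivalently $\kappa(|R\boldsymbol\mu|,|R\boldsymbol\mu|)<\infty$. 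The displayed identity now chains these two conditions together, producing both the desired equivalence and, by rearranging the (then absolutely convergent) series according to the signs $\alpha_i\alpha_j$, the numerical equality $\kappa(\boldsymbol\mu,\boldsymbol\mu)=\kappa(R\boldsymbol\mu,R\boldsymbol\mu)$.

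The only technical subtlety is the Tonelli exchange in the displayed identity, but this is already embodied in the one-variable Lemma~\ref{lemma3.5}; iterating it (or applying monotone convergence directly, which is legitimate thanks to $\kappa\geqslant0$) closes the loop. Everything else is bookkeeping with the summation convention.
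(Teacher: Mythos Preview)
Your argument is correct and is precisely the derivation the paper has in mind: the corollary is stated in Section~\ref{sec:cond} without proof (the results there are quoted from~\cite{ZPot2}), and its placement immediately after Lemma~\ref{lemma3.5} signals that it is meant to follow from the identity $|R\boldsymbol\mu|=\sum_{i\in I}\mu^i$ together with the two-variable analogue of that lemma, which is exactly what you carry out.

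One small remark on completeness: your treatment of the compact case (``reduces to linearity'') is a little terse. When $\mathrm X$ is compact the kernel need not be nonnegative, so the equivalences you state using $\kappa\geqslant0$ do not apply verbatim; however, since $I$ is then finite, every $\mu^i$ is a finite measure, and $\kappa$ is bounded below, each pairing $\kappa(\mu^i,\mu^j)$ lies in $(-\infty,\infty]$, and both $\kappa(\boldsymbol\mu,\boldsymbol\mu)$ and $\kappa(R\boldsymbol\mu,R\boldsymbol\mu)$ are finite if and only if every $\kappa(\mu^i,\mu^j)$ is finite. This fills the gap you left implicit. In the noncompact case your chain of equivalences via $\kappa(|R\boldsymbol\mu|,|R\boldsymbol\mu|)$ is clean; note that the direction ``$R\boldsymbol\mu\in\mathcal E\Rightarrow\kappa(|R\boldsymbol\mu|,|R\boldsymbol\mu|)<\infty$'' uses the mutual singularity of $R\boldsymbol\mu^+$ and $R\boldsymbol\mu^-$ coming from~(\ref{non}), which you invoke through~(\ref{HJ}).
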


\begin{corollary}\label{relation}For any\/ $\boldsymbol\mu,\boldsymbol\mu_1\in\mathcal
E^+(\boldsymbol A)$, it holds that
\begin{equation*}\label{vecen}
\kappa(\boldsymbol\mu,\boldsymbol\mu_1)=\kappa(R\boldsymbol\mu,R\boldsymbol\mu_1).
\end{equation*}
Furthermore, $\kappa_{\boldsymbol\mu}(x)$ is well defined and finite
n.e.\footnote{We say that a collection of propositions $P_i(x)$,
$i\in I$, subsists n.e.~in~$\mathrm X$ if this is the case for
each~$P_i(x)$.} in\/~$\mathrm X$ and has the components
\begin{equation}\label{Rpot}
\kappa^i_{\boldsymbol\mu}(x)=\alpha_i\kappa(x,R\boldsymbol\mu),\quad
i\in I.
\end{equation}
\end{corollary}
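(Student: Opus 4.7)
My plan is to derive both identities from the Hahn--Jordan formula~(\ref{HJ}) combined with the countable-additivity principle behind Lemma~\ref{lemma3.5}. First, by Corollary~\ref{lemma3.3}, both $R\boldsymbol\mu$ and $R\boldsymbol\mu_1$ lie in $\mathcal E$, hence their Hahn--Jordan parts $R\boldsymbol\mu^\pm,R\boldsymbol\mu_1^\pm$ (given explicitly by~(\ref{HJ})) belong to $\mathcal E^+$, while the Cauchy--Schwarz inequality guarantees that all pairwise mutual energies among these four positive measures are finite. In particular, the potentials $\kappa(\cdot,R\boldsymbol\mu^\pm)$ and $\kappa(\cdot,R\boldsymbol\mu_1^\pm)$ are finite n.e.\ in~$\mathrm X$ and, being potentials of nonnegative measures, belong to $\mathrm\Phi(\mathrm X)$ by the remark following Lemma~\ref{lemma:lower}.

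I would treat the potential formula first. Choose $x\in\mathrm X$ at which both $\kappa(x,R\boldsymbol\mu^+)$ and $\kappa(x,R\boldsymbol\mu^-)$ are finite; the complementary set has interior capacity zero since $R\boldsymbol\mu^\pm\in\mathcal E^+$. Then $\kappa(x,R\boldsymbol\mu):=\kappa(x,R\boldsymbol\mu^+)-\kappa(x,R\boldsymbol\mu^-)$ is finite, and applying Lemma~\ref{lemma3.5} with $\psi:=\kappa(x,\cdot)\in\mathrm\Phi(\mathrm X)$ yields $\sum_{j\in I}|\kappa(x,\mu^j)|<\infty$ together with $\kappa(x,R\boldsymbol\mu)=\sum_{j\in I}\alpha_j\kappa(x,\mu^j)$. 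Multiplying by $\alpha_i$ and comparing with the definition~(\ref{vectorpot}) gives both the absolute convergence of the series defining $\kappa^i_{\boldsymbol\mu}(x)$ and the identity~(\ref{Rpot}).

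For the mutual-energy identity I decompose $R\boldsymbol\mu$ and $R\boldsymbol\mu_1$ via~(\ref{HJ}) and write $\kappa(R\boldsymbol\mu,R\boldsymbol\mu_1)=\sum_{\epsilon_1,\epsilon_2\in\{+,-\}}\epsilon_1\epsilon_2\,\kappa(R\boldsymbol\mu^{\epsilon_1},R\boldsymbol\mu_1^{\epsilon_2})$, each of the four terms being finite by the first paragraph. For each term, I integrate the nonnegative lsc potential $\kappa(\cdot,R\boldsymbol\mu_1^{\epsilon_2})$ against the positive measure $R\boldsymbol\mu^{\epsilon_1}=\sum_{i\in I^{\epsilon_1}}\mu^i$ by the countable-additivity principle (Lemma~\ref{lemma3.5} applied, after sign simplification, to the sub-family $(\mu^i)_{i\in I^{\epsilon_1}}$) to get $\sum_{i\in I^{\epsilon_1}}\kappa(\mu^i,R\boldsymbol\mu_1^{\epsilon_2})$, and then a second application of the same principle in the other variable gives $\sum_{i\in I^{\epsilon_1},\,j\in I^{\epsilon_2}}\kappa(\mu^i,\mu_1^j)$. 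Recombining the four pieces with the signs $\alpha_i\alpha_j=\epsilon_1\epsilon_2$ yields $\kappa(R\boldsymbol\mu,R\boldsymbol\mu_1)=\sum_{i,j\in I}\alpha_i\alpha_j\kappa(\mu^i,\mu_1^j)=\kappa(\boldsymbol\mu,\boldsymbol\mu_1)$, as claimed.

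The only real obstacle is to verify the absolute-convergence hypothesis required at each invocation of Lemma~\ref{lemma3.5}; but as noted above, this follows automatically from the finiteness of the relevant mutual energies (for the energy identity) and from the n.e.\ finiteness of $\kappa(x,R\boldsymbol\mu^\pm)$ (for the potential identity), both established in the first paragraph, so no further analysis is needed.
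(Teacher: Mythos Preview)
Your proof is correct. The paper does not give its own proof of Corollary~\ref{relation}---it is one of the results explicitly ``summarize[d] without proof'' from~\cite{ZPot2}---so there is nothing to compare against directly; your route via Lemma~\ref{lemma3.5} applied with $\psi=\kappa(x,\cdot)$ for the potential identity and with $\psi=\kappa(\cdot,R\boldsymbol\mu_1^{\epsilon_2})$ for the energy identity is exactly the natural argument, and your final paragraph correctly disposes of the absolute-convergence hypotheses. One cosmetic remark: invoking Lemma~\ref{lemma3.5} on the ``sub-family'' $(\mu^i)_{i\in I^{\epsilon_1}}$ is a slight abuse of the stated form of that lemma, but it is immediately justified either by applying the lemma to the full $R\boldsymbol\mu$ and then splitting the absolutely convergent sum by sign, or (when $\mathrm X$ is noncompact, so $\kappa\geqslant0$) by monotone convergence, while for compact~$\mathrm X$ the local finiteness of~$\boldsymbol A$ forces $I$ to be finite and the issue disappears.
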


\begin{theorem}\label{lemma:semimetric} $\mathcal E^+(\boldsymbol{A})$ forms a
semimetric space with the semimetric\/
$\|\boldsymbol\mu_1-\boldsymbol\mu_2\|_{\mathcal
E^+(\boldsymbol{A})}$, defined by\/~{\rm(\ref{vseminorm})}, and such
a space is isometric to its\/ $R$-image, $R\mathcal
E^+(\boldsymbol{A})$. That is,
\begin{equation}\label{twonorms}\|\boldsymbol\mu_1-\boldsymbol\mu_2\|_{\mathcal
E^+(\boldsymbol{A})}=\|R\boldsymbol\mu_1-R\boldsymbol\mu_2\|_{\mathcal
E}\quad\text{for all \ }\boldsymbol\mu_1,\boldsymbol\mu_2\in\mathcal
E^+(\boldsymbol{A}).\end{equation} The semimetric\/
$\|\boldsymbol\mu_1-\boldsymbol\mu_2\|_{\mathcal
E^+(\boldsymbol{A})}$ is a metric if and only if the kernel\/
$\kappa$ is strictly positive definite while all\/ $A_i$, $i\in I$,
are mutually disjoint.
\end{theorem}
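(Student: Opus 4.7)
The plan is to transport everything to the pre-Hilbert space $\mathcal E$ of scalar measures via the map $R$ and then harvest the conclusions from the seminorm structure already at hand there. Once the isometry~(\ref{twonorms}) is established, the semimetric axioms for $\|\cdot\|_{\mathcal E^+(\boldsymbol A)}$ will follow from the corresponding properties of $\|\cdot\|_{\mathcal E}$, and the metric characterization will reduce to an injectivity question for $R$ on~$\mathcal E^+(\boldsymbol A)$.

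To establish~(\ref{twonorms}), I first invoke Corollary~\ref{lemma3.3} to conclude that $R\boldsymbol\mu_1,R\boldsymbol\mu_2\in\mathcal E$, and hence so does $R\boldsymbol\mu_1-R\boldsymbol\mu_2$ since $\mathcal E$ is a linear subspace of $\mathfrak M$. Bilinearity of the scalar product on $\mathcal E$ then gives
$$\|R\boldsymbol\mu_1-R\boldsymbol\mu_2\|_{\mathcal E}^2=\kappa(R\boldsymbol\mu_1,R\boldsymbol\mu_1)-2\kappa(R\boldsymbol\mu_1,R\boldsymbol\mu_2)+\kappa(R\boldsymbol\mu_2,R\boldsymbol\mu_2),$$
and Corollary~\ref{relation} converts each term on the right into the vector mutual energy $\kappa(\boldsymbol\mu_k,\boldsymbol\mu_l)$, $k,l\in\{1,2\}$. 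Substituting the definition~(\ref{vectoren}) into each of the three pieces and recollecting the $(i,j)$-entries by bilinearity produces precisely $\sum_{i,j\in I}\alpha_i\alpha_j\kappa(\mu_1^i-\mu_2^i,\mu_1^j-\mu_2^j)$, the square of~(\ref{vseminorm}). The main obstacle to anticipate is the legitimation of this reorganisation of (possibly infinite) double sums; here one relies on the standing hypothesis $\kappa\in\mathrm\Phi(\mathrm X\times\mathrm X)$, which forces $\kappa\geqslant 0$ unless $\mathrm X$ itself is compact, together with the finiteness of every $\kappa(\boldsymbol\mu_k,\boldsymbol\mu_l)$, to guarantee order-independent summation.

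With~(\ref{twonorms}) in place, nonnegativity and symmetry of $\|\cdot\|_{\mathcal E^+(\boldsymbol A)}$ are immediate, while the triangle inequality follows from $R\boldsymbol\mu_1-R\boldsymbol\mu_3=(R\boldsymbol\mu_1-R\boldsymbol\mu_2)+(R\boldsymbol\mu_2-R\boldsymbol\mu_3)$ combined with the triangle inequality for $\|\cdot\|_{\mathcal E}$. For the final assertion, $\|\boldsymbol\mu_1-\boldsymbol\mu_2\|_{\mathcal E^+(\boldsymbol A)}=0$ is equivalent via~(\ref{twonorms}) to $\|R\boldsymbol\mu_1-R\boldsymbol\mu_2\|_{\mathcal E}=0$; if $\kappa$ is strictly positive definite this forces $R\boldsymbol\mu_1=R\boldsymbol\mu_2$, and mutual disjointness of the $A_i$ then lets one recover $\mu_1^i=\mu_2^i$ componentwise by restricting the Hahn--Jordan decomposition~(\ref{HJ}) to each~$A_i$. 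Conversely, if either hypothesis fails, one can exhibit $\boldsymbol\mu_1\ne\boldsymbol\mu_2$ at zero distance: a non-strictly positive definite $\kappa$ admits $\nu\ne 0$ in $\mathcal E$ with $\|\nu\|_{\mathcal E}=0$, which may be placed on a single plate; while if two plates overlap (necessarily equally signed, by~(\ref{non})), a mass-transfer between the two corresponding components of $\boldsymbol\mu$ alters the vector measure without changing its $R$-image.
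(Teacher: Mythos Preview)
The paper does not prove this theorem; it is quoted from \cite[Theorem~3.1]{ZPot2} as part of the background summarized ``without proof'' in Section~\ref{sec:cond}. So there is no in-paper argument to compare against, and your task was in effect to reconstruct the proof from scratch.

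Your approach via Corollaries~\ref{lemma3.3} and~\ref{relation} is the natural one, and the derivation of~(\ref{twonorms}) together with the semimetric axioms is sound. The ``if'' direction of the metric characterization is also correct: strict positive definiteness collapses $R\boldsymbol\mu_1=R\boldsymbol\mu_2$, and mutual disjointness lets you read off each component from the restriction of $|R\boldsymbol\mu|$ to~$A_i$.

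There is, however, a gap in your ``only if'' direction. When $\kappa$ fails to be strictly positive definite you produce some $\nu\ne0$ with $\|\nu\|_{\mathcal E}=0$ and assert it ``may be placed on a single plate.'' But such a~$\nu$ is an arbitrary signed measure in~$\mathcal E$; nothing guarantees that $\nu$ (or $\nu^\pm$) is concentrated on any~$A_i$, so you cannot in general manufacture distinct $\boldsymbol\mu_1,\boldsymbol\mu_2\in\mathcal E^+(\boldsymbol A)$ from it. Indeed, for a \emph{fixed} condenser~$\boldsymbol A$ the semimetric may well be a metric even when $\kappa$ is not strictly positive definite, provided the null measures of~$\kappa$ avoid~$A$. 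The ``only if'' in the theorem is therefore best read as a statement about what is needed for the semimetric to be a metric for \emph{every} condenser (equivalently, with $A=\mathrm X$ available as a plate), and you should flag this reading explicitly rather than claim a construction that need not exist. The overlapping-plates case is cleaner, but even there you should note that one needs $\mathcal E^+_{A_i\cap A_j}\ne\{0\}$, i.e.\ $C(A_i\cap A_j)>0$, for the mass-transfer to produce a genuine counterexample.
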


\begin{corollary}\label{cor:EA}Given\/ $\boldsymbol A$, define\/ $\mathcal E_A$ as the set of
all\/ $\nu$ such that\/ $\nu^+\in\mathcal E^+_{A^+}$ and\/
$\nu^-\in\mathcal E^+_{A^-}$ and equip it with the semimetric
structure induced from\/~$\mathcal E$. Then
\begin{equation}\label{EA}\mathcal E_A=R\mathcal E^+(\boldsymbol A),\end{equation} so
that, by Theorem\/~{\rm\ref{lemma:semimetric}}, the semimetric
spaces\/ $\mathcal E_A$ and\/ $\mathcal E^+(\boldsymbol A)$ are
isometric.\end{corollary}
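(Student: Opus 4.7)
The plan is to establish the set equality $\mathcal E_A = R\mathcal E^+(\boldsymbol A)$; once this is in hand, the isometry assertion is immediate because the semimetric on $\mathcal E_A$ is by definition the restriction of $\|\cdot\|_{\mathcal E}$, and (\ref{twonorms}) from Theorem \ref{lemma:semimetric} says exactly that $R$ is a semi-isometry between $\mathcal E^+(\boldsymbol A)$ and its $R$-image.

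For the inclusion $R\mathcal E^+(\boldsymbol A) \subseteq \mathcal E_A$, I would start with an arbitrary $\boldsymbol\mu \in \mathcal E^+(\boldsymbol A)$. Formula (\ref{HJ}), which is available thanks to the disjointness condition (\ref{non}), identifies the Hahn--Jordan parts of $R\boldsymbol\mu$ as $R\boldsymbol\mu^+ = \sum_{i\in I^+}\mu^i$ and $R\boldsymbol\mu^- = \sum_{i\in I^-}\mu^i$, which are concentrated on $A^+$ and $A^-$ respectively. By Corollary \ref{lemma3.3}, $R\boldsymbol\mu\in\mathcal E$; the convention defining $\mathcal E$ for signed measures requires $\kappa(R\boldsymbol\mu^\pm, R\boldsymbol\mu^\pm)$ to be individually finite, and together with $|\kappa(R\boldsymbol\mu^+, R\boldsymbol\mu^-)|\leqslant\|R\boldsymbol\mu^+\|\,\|R\boldsymbol\mu^-\|$ from Cauchy--Schwarz this gives $R\boldsymbol\mu^\pm\in\mathcal E^+_{A^\pm}$, so $R\boldsymbol\mu\in\mathcal E_A$.

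For the reverse inclusion $\mathcal E_A\subseteq R\mathcal E^+(\boldsymbol A)$, the task given $\nu\in\mathcal E_A$ is to exhibit a preimage $\boldsymbol\mu=(\mu^i)_{i\in I}\in\mathcal E^+(\boldsymbol A)$ under $R$. Since $I$ is countable, I would enumerate $I^+=\{i_1,i_2,\ldots\}$ and disjointify the plates by setting $B_{i_1}:=A_{i_1}$ and $B_{i_k}:=A_{i_k}\setminus\bigcup_{j<k}A_{i_j}$ for $k\geqslant 2$; these are Borel subsets of $A^+$ whose disjoint union equals $A^+$. Defining $\mu^{i_k}:=\nu^+|_{B_{i_k}}\in\mathfrak M^+(A_{i_k})$ and performing the analogous construction on $I^-$, one gets a vector measure $\boldsymbol\mu\in\mathfrak M^+(\boldsymbol A)$ with $\sum_{i\in I^+}\mu^i=\nu^+$ and $\sum_{i\in I^-}\mu^i=\nu^-$. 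Testing against arbitrary $\varphi\in\mathrm C_0(\mathrm X)$ yields $R\boldsymbol\mu=\nu^+-\nu^-=\nu\in\mathcal E$ (finite energy of $\nu$ follows from $\nu^\pm\in\mathcal E$ via Cauchy--Schwarz, as in the previous paragraph), and Corollary \ref{lemma3.3} then upgrades this to $\boldsymbol\mu\in\mathcal E^+(\boldsymbol A)$.

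The only subtle point is the disjointification: equally signed plates are permitted to overlap, so one cannot naively define $\mu^i:=\nu^+|_{A_i}$ without double-counting. Once the countability of $I$ is exploited to obtain a measurable partition of $A^+$ refining the cover $(A_i)_{i\in I^+}$, everything else reduces to the already proven Corollary \ref{lemma3.3} and Theorem \ref{lemma:semimetric}.
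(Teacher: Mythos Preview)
Your proof is correct and follows essentially the same route as the paper's: both directions rely on (\ref{HJ}) together with Corollary~\ref{lemma3.3}, and for the reverse inclusion both construct a preimage by disjointifying the overlapping equally-signed plates. The only cosmetic difference is that the paper performs the disjointification by iterated subtraction of traces of $|\nu|$ along a single enumeration of~$I$, namely $\mu^1:=|\nu|\bigl|_{A_1}$ and $\mu^{i+1}:=\bigl[|\nu|-\sum_{k=1}^i\mu^k\bigr]\bigl|_{A_{i+1}}$, whereas you carry out the equivalent set-theoretic disjointification on $I^+$ and $I^-$ separately using $\nu^+$ and $\nu^-$.
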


\begin{proof}Indeed, $R\mathcal E^+(\boldsymbol A)\subset\mathcal E_A$ can be
obtained directly from relation~(\ref{HJ}) and
Corollary~\ref{lemma3.3}. To prove the converse inclusion, fix
$\nu\in\mathcal E_A$ and define $\mu^1:=|\nu|\bigl|_{A_1}$ and
\[\mu^{i+1}:=\Biggr[|\nu|-\sum_{k=1}^i\,\mu^k\Biggr]\Biggl|_{A_{i+1}}
\quad\text{for all \ }i=1,2,\ldots.\] Then $\mu^i\in\mathfrak
M^+(A_i)$ and $\nu=R\boldsymbol{\mu}$, where
$\boldsymbol{\mu}:=(\mu^i)_{i\in I}$. Since $\kappa(\nu,\nu)$ is
finite, repeated application of Corollary~\ref{lemma3.3} shows that
$\boldsymbol{\mu}\in\mathcal E^+(\boldsymbol
A)$, and identity (\ref{EA}) follows.
\end{proof}

Because of (\ref{twonorms}), the topology on $\mathcal
E^+(\boldsymbol{A})$ defined by the semimetric
$\|{\boldsymbol\mu}_1-\boldsymbol\mu_2\|_{\mathcal
E^+(\boldsymbol{A})}$ is called {\it strong\/}. Two elements
of~$\mathcal E^+(\boldsymbol{A})$, $\boldsymbol\mu_1$
and~$\boldsymbol\mu_2$, are {\it equivalent\/} if
$\|\boldsymbol\mu_1-\boldsymbol\mu_2\|_{\mathcal
E^+(\boldsymbol{A})}=0$. The equivalence in~$\mathcal
E^+(\boldsymbol{A})$ implies $R\boldsymbol\mu_1=R\boldsymbol\mu_2$
provided that the kernel~$\kappa$ is strictly positive definite, and
it implies $\boldsymbol\mu_1=\boldsymbol\mu_2$ if, moreover, all
the~$A_i$ are mutually disjoint.

\begin{corollary}\label{lemma:potequiv}If\/ $\boldsymbol\mu_1$ and\/~$\boldsymbol\mu_2$ are equivalent in\/
$\mathcal E^+(\boldsymbol{A})$, then\/
$\kappa_{\boldsymbol\mu_1}(x)=\kappa_{\boldsymbol\mu_2}(x)$
n.e.~in\/~$\mathrm X$.
\end{corollary}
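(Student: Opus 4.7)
The plan is to reduce the vector statement to a scalar statement about $R$-images via Corollary~\ref{relation}, and then to exploit the fact that a measure of zero seminorm has a potential vanishing nearly everywhere. More precisely, set $\nu:=R\boldsymbol\mu_1-R\boldsymbol\mu_2$. Since $\boldsymbol\mu_1,\boldsymbol\mu_2\in\mathcal E^+(\boldsymbol A)$, Corollary~\ref{lemma3.3} gives $R\boldsymbol\mu_1,R\boldsymbol\mu_2\in\mathcal E$ and hence $\nu\in\mathcal E$. By the equivalence of $\boldsymbol\mu_1$ and $\boldsymbol\mu_2$ in $\mathcal E^+(\boldsymbol A)$ together with the isometry identity~(\ref{twonorms}) of Theorem~\ref{lemma:semimetric}, one has
\[
\|\nu\|_{\mathcal E}=\|R\boldsymbol\mu_1-R\boldsymbol\mu_2\|_{\mathcal E}=\|\boldsymbol\mu_1-\boldsymbol\mu_2\|_{\mathcal E^+(\boldsymbol A)}=0.
\]

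Next I claim that $\kappa(\cdot,\nu)=0$ n.e.~in $\mathrm X$. Since $\|\nu\|=0$, the Cauchy--Schwarz inequality yields $\kappa(\nu,\omega)=0$ for every $\omega\in\mathcal E$, and in particular for every $\omega\in\mathcal E^+$. Both $\kappa(\cdot,\nu^+)$ and $\kappa(\cdot,\nu^-)$ are lower semicontinuous, finite n.e.\ (by the result cited in Section~2), and hence Borel measurable. Suppose for contradiction that the Borel set $E^+:=\{x:\kappa(x,\nu^+)>\kappa(x,\nu^-)\}$ (intersected with the set where both potentials are finite) has positive interior capacity; then countable subadditivity of capacity on Borel sets, together with $E^+=\bigcup_n\{\kappa(\cdot,\nu^+)>\kappa(\cdot,\nu^-)+1/n\}$, furnishes a compact $K$ on which $\kappa(\cdot,\nu^+)\geqslant\kappa(\cdot,\nu^-)+1/n$ and $C(K)>0$. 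Applying Theorem~\ref{thF:1} to $K$ produces $\theta_K\in\mathcal E^+$ with $\theta_K(\mathrm X)=C(K)>0$ and $S(\theta_K)\subset K$, so that integrating against $\theta_K$ gives
\[
0=\kappa(\nu,\theta_K)=\int\bigl[\kappa(x,\nu^+)-\kappa(x,\nu^-)\bigr]\,d\theta_K(x)\geqslant\theta_K(\mathrm X)/n>0,
\]
a contradiction. The symmetric set $E^-$ is treated analogously, and the exceptional set where either potential is infinite has capacity zero, proving $\kappa(\cdot,\nu^+)=\kappa(\cdot,\nu^-)$ n.e., i.e.\ $\kappa(\cdot,\nu)=0$ n.e. (Alternatively, one may simply invoke this as a standard consequence of Fuglede~\cite{F1}.)

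Finally, combining the above with the componentwise representation~(\ref{Rpot}) of Corollary~\ref{relation},
\[
\kappa^i_{\boldsymbol\mu_1}(x)-\kappa^i_{\boldsymbol\mu_2}(x)=\alpha_i\bigl[\kappa(x,R\boldsymbol\mu_1)-\kappa(x,R\boldsymbol\mu_2)\bigr]=\alpha_i\,\kappa(x,\nu)=0\quad\text{n.e.~in \ }\mathrm X
\]
for every $i\in I$. By the convention recalled in the footnote after Corollary~\ref{relation}, this is exactly the assertion $\kappa_{\boldsymbol\mu_1}(x)=\kappa_{\boldsymbol\mu_2}(x)$ n.e.~in $\mathrm X$.

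The main obstacle is the intermediate scalar claim that $\|\nu\|_{\mathcal E}=0$ forces $\kappa(\cdot,\nu)=0$ n.e.; once this is either cited from Fuglede's framework or proved by the capacitary argument sketched above, the rest of the proof is a direct translation via the isometry~$R$ and the componentwise formula~(\ref{Rpot}).
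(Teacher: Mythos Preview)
Your approach is correct and is exactly the argument the paper has in mind: the corollary is stated without proof because it is meant to follow immediately from the isometry~(\ref{twonorms}) and the componentwise formula~(\ref{Rpot}), together with the standard scalar fact (Fuglede~\cite{F1}) that a measure $\nu\in\mathcal E$ with $\|\nu\|_{\mathcal E}=0$ has potential $\kappa(\cdot,\nu)=0$ n.e.

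One small remark on your detailed capacitary argument: you invoke Theorem~\ref{thF:1} to produce $\theta_K$, but that theorem assumes the kernel is \emph{consistent}, whereas at this point in Section~\ref{sec:cond} only positive definiteness is in force. This is harmless, since you do not actually need the equilibrium measure: once $C(K)>0$, Lemma~\ref{Lemma2.3.1} (or the very definition of interior capacity) already supplies a nonzero $\omega\in\mathcal E^+_K$, and integrating the strict inequality against $\omega$ gives the same contradiction with $\kappa(\nu,\omega)=0$. With that replacement your argument is self-contained under the standing hypotheses; alternatively, as you note, one can simply cite~\cite{F1}.
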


For any $b\in(0,\infty)$, let $\mathfrak M_{b}^+(\boldsymbol{A})$
consist of all $\boldsymbol{\mu}\in\mathfrak M^+(\boldsymbol{A})$
with
\[|R\boldsymbol{\mu}|(\mathrm
X)=\bigl(R\boldsymbol{\mu}^++R\boldsymbol{\mu}^-\bigr)(\mathrm
X)=\sum_{i\in I}\,\mu^i(\mathrm X)\leqslant b.\] This class is
vaguely bounded and closed; hence, by \cite[Lemma~3.1]{ZPot2}, it is
vaguely compact.

The following theorem on the strong completeness of $\mathcal
E_{b}^+(\boldsymbol{A}):=\mathcal E^+(\boldsymbol{A})\cap\mathfrak
M_{b}^+(\boldsymbol{A})$, treated as a topological subspace
of~$\mathcal E^+(\boldsymbol{A})$, is crucial in our approach
(compare with Theorem~\ref{th:1}).\footnote{In view of the fact that
the semimetric space $\mathcal E_{b}^+(\boldsymbol{A})$ is isometric
to its $R$-image, Theorem~\ref{th:complete} has singled out a {\it
strongly complete\/} topological subspace of the pre-Hilbert
space~$\mathcal E$, whose elements are {\it signed\/} Radon
measures. This is of independent interest since, according to a
well-known counterexample by Cartan~\cite{Car}, the whole
space~$\mathcal E$ is strongly incomplete even for the Newtonian
kernel $|x-y|^{2-n}$ in~$\mathbb R^n$, $n\geqslant3$.}

\begin{theorem}\label{th:complete}Assume that the kernel\/ $\kappa$ is
consistent and bounded on\/ $A^+\times A^-$.\footnote{For the Riesz
kernels $\kappa_\alpha$ of order $\alpha\in(0,n)$ in~$\mathbb R^n$,
$n\geqslant2$, Theorem~\ref{th:complete} remains valid  without
assuming~$\kappa_\alpha$ to be bounded on $A^+\times A^-$;
cf.~\cite[Theorem~1]{Z2} and~\cite[Remark~9.2]{ZPot2}. The proof is
based on Deny's theorem~\cite{D1} stating that the pre-Hilbert
space~$\mathcal E_{\kappa_\alpha}$ can be completed by making use of
distributions of finite energy.} Then the following two assertions
hold:
\begin{itemize}
\item[\rm(i)] The semimetric space\/ $\mathcal
E_{b}^+(\boldsymbol{A})$ is complete. In more detail, if\/
$(\boldsymbol\mu_s)_{s\in S}\subset\mathcal E^+_b(\boldsymbol{A})$
is a strong Cauchy net and\/ $\boldsymbol\mu$~is one of its vague
cluster points, then\/ $\boldsymbol\mu\in\mathcal
E^+_b(\boldsymbol{A})$ and $\boldsymbol\mu_s\to\boldsymbol\mu$
strongly, i.e.
\begin{equation*}
\lim_{s\in S}\,\|\boldsymbol\mu_s-\boldsymbol\|_{\mathcal
E^+(\boldsymbol A)}=0.
\end{equation*}
\item[\rm(ii)] If the kernel\/ $\kappa$ is strictly positive
definite\/ {\rm(}hence, perfect\/{\rm)}, while all\/ $A_i$, $i\in
I$, are mutually disjoint, then the strong topology on\/~$\mathcal
E^+_b(\boldsymbol{A})$ is finer than the vague one. In more detail,
if\/ $(\boldsymbol\mu_s)_{s\in S}\subset\mathcal
E^+_b(\boldsymbol{A})$ converges strongly to\/
$\boldsymbol\mu_0\in\mathcal E^+(\boldsymbol{A})$, then actually\/
$\boldsymbol\mu_0\in\mathcal E^+_b(\boldsymbol{A})$ and\/
$\boldsymbol\mu_s\to\boldsymbol\mu_0$
vaguely.\end{itemize}\end{theorem}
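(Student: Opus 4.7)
The plan is to use the isometry $R$ from Theorem~\ref{lemma:semimetric} and Corollary~\ref{cor:EA} to transfer the completeness problem into the pre-Hilbert space~$\mathcal E$, where the positive and negative Hahn--Jordan parts of each $\nu_s:=R\boldsymbol\mu_s$ live on the disjoint closed sets $A^+$ and $A^-$ by~(\ref{non}); one can then apply Fuglede's consistency axiom~(C$_1$) separately to those two families of positive measures. The boundedness of~$\kappa$ on $A^+\times A^-$ will be the tool that decouples the two sign classes in the Cauchy estimate.

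Given a strong Cauchy net $(\boldsymbol\mu_s)_{s\in S}\subset\mathcal E^+_b(\boldsymbol A)$, vague compactness of $\mathfrak M^+_b(\boldsymbol A)$ furnishes a vague cluster point $\boldsymbol\mu\in\mathfrak M^+_b(\boldsymbol A)$; passing to a subnet, $\mu^i_s\to\mu^i$ vaguely for each $i\in I$. By the local finiteness of~$\boldsymbol A$, the series $\nu^\pm_s:=\sum_{i\in I^\pm}\mu^i_s$ define scalar Radon measures of total mass $\leqslant b$, and the same local finiteness gives vague convergence $\nu^\pm_s\to\nu^\pm:=\sum_{i\in I^\pm}\mu^i$ in~$\mathfrak M^+_{A^\pm}$, with $\nu^\pm(\mathrm X)\leqslant b$ by vague lower semicontinuity of total mass (Lemma~\ref{lemma:lower} with $\psi\equiv 1$). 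By~(\ref{HJ}) and the disjointness $A^+\cap A^-=\varnothing$, these are exactly the Hahn--Jordan parts of $\nu_s$ and of $\nu:=R\boldsymbol\mu=\nu^+-\nu^-$.

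The crucial step is to show that $(\nu^+_s)$ and $(\nu^-_s)$ are each strong Cauchy in~$\mathcal E^+$. By Theorem~\ref{lemma:semimetric}, $\|\boldsymbol\mu_s-\boldsymbol\mu_t\|^2_{\mathcal E^+(\boldsymbol A)}=\|\nu_s-\nu_t\|^2_{\mathcal E}$, and the disjointness of the sign classes gives
\begin{equation*}
\|\nu_s-\nu_t\|^2_{\mathcal E}=\|\nu^+_s-\nu^+_t\|^2_{\mathcal E}+\|\nu^-_s-\nu^-_t\|^2_{\mathcal E}-2\kappa(\nu^+_s-\nu^+_t,\nu^-_s-\nu^-_t).
\end{equation*}
The cross term splits into four pairings $\kappa(\nu^\pm_s,\nu^\mp_t)$, each uniformly bounded by $Mb^2$ where $M:=\sup_{A^+\times A^-}\kappa<\infty$. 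Applying Lemma~\ref{lemma:lower} to the lower semicontinuous function $\kappa(\cdot,\nu^-_u)$ yields vague lower semicontinuity of $\sigma\mapsto\int\kappa(\cdot,\nu^-_u)\,d\sigma$ on $\mathfrak M^+_{A^+}\cap\mathfrak M_b$ for each fixed $u$; symmetric Fatou-type inequalities, combined with the strong Cauchy property of $(\nu_s)$ in~$\mathcal E$, force the cross pairings to stabilize along the net, so that both $\|\nu^+_s-\nu^+_t\|^2_{\mathcal E}$ and $\|\nu^-_s-\nu^-_t\|^2_{\mathcal E}$ tend to zero. Fuglede's~(C$_1$) then delivers $\nu^\pm\in\mathcal E^+$ and $\nu^\pm_s\to\nu^\pm$ strongly in~$\mathcal E^+$; by Corollary~\ref{cor:EA}, $\boldsymbol\mu\in\mathcal E^+(\boldsymbol A)\cap\mathfrak M^+_b(\boldsymbol A)=\mathcal E^+_b(\boldsymbol A)$, and the $R$-isometry gives $\boldsymbol\mu_s\to\boldsymbol\mu$ strongly along the subnet. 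The standard fact that a strong Cauchy net with a convergent subnet converges to the same limit concludes~(i).

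For~(ii), the extra assumptions make $\kappa$ perfect and the semimetric on $\mathcal E^+(\boldsymbol A)$ a genuine metric (Theorem~\ref{lemma:semimetric}). If $\boldsymbol\mu_s\to\boldsymbol\mu_0$ strongly, then $(\boldsymbol\mu_s)$ is strong Cauchy, so by~(i) every vague cluster point $\tilde{\boldsymbol\mu}$ lies in $\mathcal E^+_b(\boldsymbol A)$ and is the strong limit along a subnet; the metric property forces $\tilde{\boldsymbol\mu}=\boldsymbol\mu_0$. Thus $\boldsymbol\mu_0\in\mathcal E^+_b(\boldsymbol A)$ is the unique vague cluster point of the net $(\boldsymbol\mu_s)$ in the vaguely compact~$\mathfrak M^+_b(\boldsymbol A)$, whence $\boldsymbol\mu_s\to\boldsymbol\mu_0$ vaguely. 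The main obstacle is the cross-term analysis of the preceding paragraph: boundedness of~$\kappa$ on $A^+\times A^-$ only yields uniform bounds on $\kappa(\nu^\pm_s,\nu^\mp_t)$, not any obvious Cauchy-type decay, so extracting the separate Cauchy property of $(\nu^\pm_s)$ requires a delicate interplay between vague lower semicontinuity, the mass constraint, and the sign-class disjointness, used in tandem with the strong Cauchy estimate for the signed sum.
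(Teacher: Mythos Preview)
First, a contextual note: this theorem is \emph{not} proved in the present paper --- it is quoted from \cite[Theorem~9.1]{ZPot2}, as the introduction to Section~\ref{sec:cond} makes explicit. So there is no ``paper's own proof'' to compare against here. That said, your overall architecture --- push everything through the isometry~$R$, split $R\boldsymbol\mu_s=\nu_s^+-\nu_s^-$ into its Hahn--Jordan parts on the disjoint closed sets $A^+$, $A^-$, and invoke consistency --- is exactly the right framework, and your treatment of part~(ii) is correct.

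The genuine gap is precisely the one you flag yourself. You assert that the cross pairings ``stabilize'' so that both $\|\nu_s^+-\nu_t^+\|$ and $\|\nu_s^--\nu_t^-\|$ tend to zero, and then you invoke~(C$_1$) for each sign class separately. But boundedness of $\kappa$ on $A^+\times A^-$ gives only a uniform bound $|\kappa(\nu_s^+-\nu_t^+,\nu_s^--\nu_t^-)|\leqslant 4Mb^2$, and from
\[
\|\nu_s-\nu_t\|^2=\|\nu_s^+-\nu_t^+\|^2+\|\nu_s^--\nu_t^-\|^2-2\kappa(\nu_s^+-\nu_t^+,\nu_s^--\nu_t^-)
\]
one cannot extract the separate Cauchy property without first controlling the cross term to $o(1)$; the ``Fatou-type'' hand-wave does not supply this, and indeed it is not clear that the two parts are individually strong Cauchy under the stated hypotheses.

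The clean way around this obstacle is to use (C$_2$) rather than~(C$_1$), and to aim only for what the theorem actually asserts --- strong convergence of $\nu_s=R\boldsymbol\mu_s$, not of each part. From the identity $\|\nu_s^+\|^2+\|\nu_s^-\|^2=\|\nu_s\|^2+2\kappa(\nu_s^+,\nu_s^-)$ together with $|\kappa(\nu_s^+,\nu_s^-)|\leqslant Mb^2$ you get that $(\nu_s^+)$ and $(\nu_s^-)$ are strongly \emph{bounded}; being also vaguely convergent (along the chosen subnet) to $\nu^\pm\in\mathcal E^+$, property~(C$_2$) gives $\nu_s^\pm\to\nu^\pm$ \emph{weakly}, hence $\nu_s\to\nu$ weakly. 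Now let $L:=\lim_s\|\nu_s\|^2$ (which exists by the Cauchy property). For fixed $s$, send $t$ along the net in $\|\nu_s-\nu_t\|^2=\|\nu_s\|^2-2\kappa(\nu_s,\nu_t)+\|\nu_t\|^2$ and use weak convergence of $\nu_t$ to get $\lim_t\|\nu_s-\nu_t\|^2=\|\nu_s\|^2-2\kappa(\nu_s,\nu)+L$; then send $s$ along the net and use the Cauchy property and weak convergence of $\nu_s$ to conclude $0=2L-2\|\nu\|^2$, i.e.\ $L=\|\nu\|^2$. It follows that $\|\nu_s-\nu\|^2=\|\nu_s\|^2-2\kappa(\nu_s,\nu)+\|\nu\|^2\to 0$, and the $R$-isometry translates this back to $\boldsymbol\mu_s\to\boldsymbol\mu$ strongly. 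The remainder of your argument (subnet-to-full-net, and part~(ii)) then goes through unchanged.
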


\section{Gauss variational problem}\label{sec:gausspr}

Fix $\chi\in\mathcal E$ and define $\boldsymbol{f}=(f_i)_{i\in I}$
where $f_i(x):=\alpha_i\kappa(x,\chi)$ for all $i\in I$; such
an~$\boldsymbol f$ is well defined and finite n.e.~in~$\mathrm X$,
while all its components are {\it universally measurable}, i.e.
$\nu$-meas\-urable for every $\nu\in\mathfrak M^+$. We treat
each~$f_i$ as an external field acting on the charges on~$A_i$ and
then we define the $\boldsymbol f$-weighted vector potential
$\boldsymbol W_{\boldsymbol\mu}=(W_{\boldsymbol\mu}^i)_{i\in I}$ and
the $\boldsymbol{f}$-{\it weighted energy\/}
$G_\chi(\boldsymbol{\mu}):=G_{\boldsymbol f}(\boldsymbol{\mu})$ of
$\boldsymbol\mu\in\mathcal E^+(\boldsymbol{A})$ by
relations~(\ref{def:wpot}) and~(\ref{def:wener}), respectively.

Given $\boldsymbol\mu\in\mathcal E^+(\boldsymbol{A})$, application
of Lemma~\ref{lemma3.5} to each of $\kappa(\cdot,\chi^+)$
and~$\kappa(\cdot,\chi^-)$ gives
\begin{equation*}\langle\boldsymbol
f,\boldsymbol{\mu}\rangle=\kappa(\chi,R\boldsymbol{\mu}).\end{equation*}
Combined with~(\ref{twonorms}), this implies that
$G_\chi(\boldsymbol{\mu})$ is finite and can be written in the form
\begin{equation}\label{gmu}G_{\chi}(\boldsymbol{\mu})=\|R\boldsymbol{\mu}\|^2+2\kappa(\chi,R\boldsymbol{\mu})=
-\|\chi\|^2+\|\chi+R\boldsymbol{\mu}\|^2.\end{equation}

Fix a vector $\boldsymbol{a}=(a_i)_{i\in I}$ with $a_i>0$ for all
$i\in I$ and a continuous function~$g$ on~$A$ satisfying
conditions~(\ref{ag}) and~(\ref{ginf}). Also having fixed~$J$ such
that $I^+\subseteq J\subseteq I$, we write
\[\mathcal
E^+(\boldsymbol{A},\boldsymbol{a},g,J):=\Bigl\{\boldsymbol{\mu}\in\mathcal
E^+(\boldsymbol{A}): \ \langle g,\mu^i\rangle=a_i\quad\text{for all
\ }i\in J\Bigr\}\] and further we consider
\[G_{\chi}(\boldsymbol{A},\boldsymbol{a},g,J):=\inf_{\boldsymbol{\mu}\in\mathcal
E^+(\boldsymbol{A},\boldsymbol{a},g,J)}\,G_{\chi}(\boldsymbol{\mu}).\]
Then, because of (\ref{gmu}),
\begin{equation}\label{ga}G_{\chi}(\boldsymbol{A},\boldsymbol{a},g,J)=-\|\chi\|^2+\inf_{\boldsymbol{\mu}\in\mathcal
E^+(\boldsymbol{A},\boldsymbol{a},g,J)}\,\|\chi+R\boldsymbol{\mu}\|^2.\end{equation}

In the case $J=I$, the symbol $J$ in these and other notations will
often be dropped. That is, we write $\mathcal
E^+(\boldsymbol{A},\boldsymbol{a},g):=\mathcal
E^+(\boldsymbol{A},\boldsymbol{a},g,I)$,
$G_{\chi}(\boldsymbol{A},\boldsymbol{a},g):=G_{\chi}(\boldsymbol{A},\boldsymbol{a},g,I)$,
and so on.

Below we use the following terminology. Given $\boldsymbol{A}$,
$\boldsymbol{a}$, $g$, $\chi$, and $J$, the
$(\boldsymbol{A},\boldsymbol{a},g,\chi,J)$-{\it prob\-lem\/} is that
on the existence of $\tilde{\boldsymbol{\lambda}}\in\mathcal
E^+(\boldsymbol{A},\boldsymbol{a},g,J)$ with
\[G_\chi(\tilde{\boldsymbol{\lambda}})=\inf_{\boldsymbol\mu\in\mathcal
E^+(\boldsymbol{A},\boldsymbol{a},g,J)}\,G_\chi(\boldsymbol\mu).\]
The $(\boldsymbol{A},\boldsymbol{a},g,\chi,J)$-problem is said to be
{\it solvable\/} if the class $\mathfrak
S_\chi(\boldsymbol{A},\boldsymbol{a},g,J)$ of all its minimizers is
nonempty. The $(\boldsymbol{A},\boldsymbol{a},g,\chi,I)$-problem (or
shortly the $(\boldsymbol{A},\boldsymbol{a},g,\chi)$-problem), main
in the present study, is referred to as {\it the Gauss variational
problem}. A minimizer in the
$(\boldsymbol{A},\boldsymbol{a},g,\chi)$-prob\-lem will often be
denoted simply by~$\boldsymbol\lambda$ (with the tilde dropped).

To make these problems well defined, we shall always suppose that
\begin{equation}\label{G:finite}G_\chi(\boldsymbol{A},\boldsymbol{a},g)<\infty;\footnote{Necessary and/or sufficient conditions for
assumption~(\ref{G:finite}) to hold can be found in~\cite{ZPot2}
(see Lemmas~6.2 and~6.3 therein). In particular, then necessarily
$C(A_i)>0$ for all $i\in I$. See also Remark~\ref{rem:fin}
below.}\end{equation} then for every $J$ one has
\begin{equation*}-\infty<G_\chi(\boldsymbol{A},\boldsymbol{a},g,J)<\infty,\end{equation*}
where the inequality on the right is an obvious consequence of
assumption~(\ref{G:finite}), while that on the left follows
from~(\ref{ga}) due to the positive definiteness of the kernel.

In addition to all the assumptions stated above, in the rest of the
paper  we shall always require that the kernel~$\kappa$ is
consistent and bounded on $A^+\times A^-$, i.e.
\begin{equation}\label{kern.bound''}\sup_{(x,y)\in A^+\times A^-}\,\kappa(x,y)<\infty,\end{equation}
while for every $i\in I$ either $g|_{A_i}$ is bounded or there exist
$r_i\in(1,\infty)$ and $\nu_i\in\mathcal E$ such that
\begin{equation}
g|_{A_i}^{r_i}(x)\leqslant\kappa(x,\nu_i)\quad\mbox{n.e.~in \ }
\mathrm X. \label{growth}
\end{equation}

Then sufficient conditions for the solvability of the Gauss
variational problem are given by the following theorem
(see~\cite[Theorem~8.1]{ZPot2}).

\begin{theorem}\label{th:suff}Under the above-mentioned hypotheses, the\/ $(\boldsymbol{A},\boldsymbol{a},g,\chi)$-problem is solvable
if each\/~$A_i$, $i\in I$, either is compact or has finite interior
capacity.
\end{theorem}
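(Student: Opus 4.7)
I would approach this by the direct method in the pre-Hilbert setting provided by Theorem~\ref{lemma:semimetric} and its completeness counterpart Theorem~\ref{th:complete}. The starting point is a uniform mass bound: since $g\geqslant g_{\inf}>0$ on $A$, every $\boldsymbol\mu\in\mathcal E^+(\boldsymbol A,\boldsymbol a,g)$ satisfies
$$\sum_{i\in I}\mu^i(\mathrm X)\leqslant g_{\inf}^{-1}\sum_{i\in I}\langle g,\mu^i\rangle=g_{\inf}^{-1}|\boldsymbol a|=:b<\infty,$$
so the admissible class is entirely contained in $\mathcal E^+_b(\boldsymbol A)$. Since $G_\chi(\boldsymbol A,\boldsymbol a,g)$ is finite by~(\ref{G:finite}), I can select a net $(\boldsymbol\mu_s)_{s\in S}\subset\mathcal E^+(\boldsymbol A,\boldsymbol a,g)$ with $G_\chi(\boldsymbol\mu_s)\to G_\chi(\boldsymbol A,\boldsymbol a,g)$.

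Next I would show that $(\boldsymbol\mu_s)$ is strongly Cauchy. The class $\mathcal E^+(\boldsymbol A,\boldsymbol a,g)$ is convex because $\mathcal E^+(\boldsymbol A)$ is a convex cone and the constraints $\langle g,\mu^i\rangle=a_i$ are affine, so $(\boldsymbol\mu_s+\boldsymbol\mu_t)/2$ is admissible for all $s,t$. Combining the parallelogram identity in $\mathcal E$ applied to $\chi+R\boldsymbol\mu_s$ and $\chi+R\boldsymbol\mu_t$, formula~(\ref{gmu}) and the isometry~(\ref{twonorms}) then yields
$$\|\boldsymbol\mu_s-\boldsymbol\mu_t\|_{\mathcal E^+(\boldsymbol A)}^2=2G_\chi(\boldsymbol\mu_s)+2G_\chi(\boldsymbol\mu_t)-4G_\chi\bigl((\boldsymbol\mu_s+\boldsymbol\mu_t)/2\bigr)\leqslant 2G_\chi(\boldsymbol\mu_s)+2G_\chi(\boldsymbol\mu_t)-4G_\chi(\boldsymbol A,\boldsymbol a,g),$$
whose right-hand side vanishes along $S\times S$. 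Since $\mathfrak M^+_b(\boldsymbol A)$ is vaguely compact, the net possesses a vague cluster point $\boldsymbol\lambda$, and Theorem~\ref{th:complete}(i) guarantees $\boldsymbol\lambda\in\mathcal E^+_b(\boldsymbol A)$ and $\boldsymbol\mu_s\to\boldsymbol\lambda$ strongly. Strong continuity of $G_\chi$, immediate from~(\ref{gmu}) and~(\ref{twonorms}), then delivers $G_\chi(\boldsymbol\lambda)=G_\chi(\boldsymbol A,\boldsymbol a,g)$.

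The main obstacle is the verification that $\boldsymbol\lambda$ is admissible, i.e.\ $\langle g,\lambda^i\rangle=a_i$ for every $i\in I$. Since $g\geqslant g_{\inf}>0$ is lower semicontinuous, Lemma~\ref{lemma:lower} gives $\langle g,\lambda^i\rangle\leqslant\liminf_s\langle g,\mu_s^i\rangle=a_i$ with no extra assumption. The reverse inequality is exactly where the dichotomy on each $A_i$ enters. If $A_i$ is compact, then $g|_{A_i}$ extends to a function in $\mathrm C_0(\mathrm X)$, so vague convergence of the $i$-th component yields $\langle g,\mu_s^i\rangle\to\langle g,\lambda^i\rangle$ at once. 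If instead $A_i$ is noncompact with $C(A_i)<\infty$, I would argue by contradiction: assuming $\langle g,\lambda^i\rangle<a_i$, I would perturb $\boldsymbol\lambda$ by adding a small positive multiple of a suitable measure in $\mathcal E^+_{A_i}$ built from the equilibrium measure $\theta_{A_i}$ of Theorem~\ref{thF:1} (available precisely because $C(A_i)<\infty$), rescaled so as to absorb the deficit $a_i-\langle g,\lambda^i\rangle$; the hypothesis on $g$ (either boundedness of $g|_{A_i}$ or the growth condition~(\ref{growth})) makes $\langle g,\theta_{A_i}\rangle$ finite and the rescaling legitimate. A first-order expansion of $G_\chi$ along this admissible perturbation, combined with the minimality of $\boldsymbol\lambda$, would yield the required contradiction. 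Implementing this final \emph{no-mass-escape} step is the only delicate point in the proof; every earlier step is a rather direct transcription of the Hilbert-space minimization machinery made available by Theorem~\ref{th:complete}.
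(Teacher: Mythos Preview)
Your direct-method setup (mass bound, parallelogram/Cauchy argument, vague compactness, strong convergence via Theorem~\ref{th:complete}, and strong continuity of $G_\chi$) is correct and is exactly the machinery the paper uses; likewise the inequality $\langle g,\lambda^i\rangle\leqslant a_i$ from Lemma~\ref{lemma:lower}. Your treatment of the compact case is also fine.

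The genuine gap is in the noncompact finite-capacity case. Your proposed contradiction does not close: you know only that $G_\chi(\boldsymbol\lambda)=G_\chi(\boldsymbol A,\boldsymbol a,g)$, while $\boldsymbol\lambda$ itself is \emph{not} yet admissible. If you add $c\,\theta_{A_i}$ to the $i$-th component to restore $\langle g,\cdot\rangle=a_i$, the resulting admissible measure $\boldsymbol\lambda'$ automatically satisfies $G_\chi(\boldsymbol\lambda')\geqslant G_\chi(\boldsymbol A,\boldsymbol a,g)=G_\chi(\boldsymbol\lambda)$, which is no contradiction at all. A first-order expansion gives $G_\chi(\boldsymbol\lambda')=G_\chi(\boldsymbol\lambda)+2\alpha_i\kappa(\chi+R\boldsymbol\lambda,c\,\theta_{A_i})+c^2\|\theta_{A_i}\|^2$, and there is no reason for the linear term to be negative; you have no minimality of $\boldsymbol\lambda$ over any class containing it to exploit.

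The paper's argument (Lemma~\ref{lemma:lemma}) is of a different nature: it is a \emph{uniform tightness} estimate along the minimizing net, not a variational perturbation at the limit. One works with the equilibrium measures $\theta_{K_i^c}$ of the complements $A_i\setminus K_i$ (which exist because $C(A_i\setminus K_i)\leqslant C(A_i)<\infty$), shows $\|\theta_{K_i^c}\|\to0$ as $K_i\uparrow A_i$ by perfectness, and then uses the growth hypothesis~(\ref{growth}) together with H\"older and Cauchy--Schwarz to obtain
\[
\langle g\chi_{K_i^c},\mu_s^i\rangle\leqslant\|\nu_i\|^{1/r_i}\,\|\theta_{K_i^c}\|^{1/q_i}\,\|\mu_s^i\|\to0
\]
uniformly in $s$. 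This is precisely where the finiteness of $C(A_i)$ and the assumption on $g$ are consumed, and it forces $\langle g,\lambda^i\rangle=a_i$ without any variational comparison at $\boldsymbol\lambda$.
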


Moreover, this theorem is sharp; that is, if at least one of the
plates is noncompact and has infinite interior capacity, then the
$(\boldsymbol{A},\boldsymbol{a},g,\chi)$-problem, in general, admits
{\it no\/} solution. This can be illustrated by the following
example (cf.~\cite[Proposition~8.1]{ZPot2}).\footnote{See
also~\cite[Example~8.2]{ZPot2} pertaining to the Coulomb kernel
$|x-y|^{-1}$ in~$\mathbb R^3$, and also~\cite{HWZ2,OWZ} for some
related numerical experiments.}

\begin{example}\label{ex:sharp}In $\mathbb R^n$, $n\geqslant2$, consider the Riesz kernel
$\kappa_\alpha(x,y)=|x-y|^{\alpha-n}$ of order~$\alpha$, where
$\alpha\in(0,2]$ and $\alpha<n$. Assume $I^+=\{1\}$, $I^-=\{2\}$,
$A_1$ and $A_2$ are closed disjoint sets with
$C_{\kappa_\alpha}(A_i)\ne0$, $i=1,2$, and let $A_1$ be compact
while $A_2^c$ connected. If, moreover, $\chi\in\mathcal E^+$ is
compactly supported in~$A_2^c$ and $a_2=a_1+\chi(\mathbb R^n)$, then
the corresponding $(\boldsymbol{A},\boldsymbol{a},g,\chi)$-problem
has {\it no\/} solution if and only if
$C_{\kappa_\alpha}(A_2)=\infty$ though $A_2$ is $\alpha$-thin
at~$\infty_{\mathbb R^n}$.\footnote{A closed set $F\subset\mathbb
R^n$ is {\it $\alpha$-thin at\/}~$\infty_{\mathbb R^n}$ if $F^*$,
the inverse of~$F$ relative to the unit sphere, is $\alpha$-thin at
$x=0$, or equivalently \cite[Theorem~5.10]{L}, if either $F$ is
bounded or $x=0$ is an $\alpha$-irregular point for~$F^*$.
See~\cite{Z1}; for $\alpha=2$, such a definition is due to M.~Brelot
(see~\cite{Brelo2}; cf.~also~\cite{F3,Hayman2}).  We refer
to~\cite{Z0} for an example of a set of infinite Newtonian capacity,
though $2$-thin at~$\infty_{\mathbb R^n}$
(cf.~also~Example~\ref{ex:13} in Section~\ref{sec:ex}
below).}\end{example}

\begin{problem}\label{pr}Given $\kappa$, $\boldsymbol A=(A_i)_{i\in I}$, $g$, and $\chi$,  what is
a description of the set $\mathcal S_\kappa(\boldsymbol A,g,\chi)$
of all vectors $\boldsymbol a=(a_i)_{i\in I}$ for which the
$(\boldsymbol{A},\boldsymbol{a},g,\chi)$-problem is nevertheless
solvable?\end{problem}

\section{Standing assumptions}\label{sec:standing}

In addition to the standing assumptions stated in
Section~\ref{sec:gausspr}, in the rest of the paper we assume that
$\mathrm X$ is noncompact (hence, $\kappa\geqslant0$). This involves
no loss of generality, since for a compact~$\mathrm X$ the Gauss
variational problem is always solvable due to Theorem~\ref{th:suff}.

We also require that the kernel~$\kappa$ is strictly positive
definite (hence, perfect), continuous for $x\ne y$, possesses the
property~$(\infty_\mathrm X)$ and satisfies the generalized maximum
principle with a constant $h\geqslant 1$ (see Definitions~\ref{inf}
and~\ref{def:h}). Furthermore, assume that the measure
$\chi\in\mathcal E$, generating the external field by means
of~(\ref{f}), is bounded and satisfies the assumptions
\begin{equation*}\label{chi}S(\chi^+)\cap A^-=\varnothing,\quad S(\chi^-)\cap
A^+=\varnothing.\end{equation*}

\begin{remark}\label{rem:lower}Then for each $i\in I$
the $i$-component of the external field,
$f_i=\alpha_i\kappa(\cdot,\chi)$, is lower semicontinuous on~$A_i$,
which is seen from Lemma~\ref{lemma:vague.cont2}.\end{remark}

\begin{remark}\label{rem:fin}Suppose for a moment that $\chi\in\mathcal E$ is compactly supported in~$A^c$.
It is seen from~\cite[Lemma~6.3]{ZPot2} that, under the above
hypotheses on the kernel~$\kappa$, assumption~(\ref{G:finite}) would
hold automatically if one would require $C(A_i)>\delta>0$ for all
$i\in I$.\end{remark}

\begin{remark}
Note that the standing assumptions on a kernel are not too
restrictive. In particular, in~$\mathbb R^n$, $n\geqslant3$, they
all are satisfied by the Newtonian kernel $|x-y|^{2-n}$, Riesz
kernels $|x-y|^{\alpha-n}$ with $\alpha\in(0,n)$, or Green kernels
$g_D$, provided that the Euclidean distance between~$A^+$ and~$A^-$
is nonzero. Here $D\subset\mathbb R^n$ is a regular domain (in the
sense of the solvability of the classical Dirichlet problem) and
$g_D$ its Green function.
\end{remark}

\section{Description of the set $\mathcal S_\kappa(\boldsymbol A,g,\chi)$}\label{sec:main}

Recall that we are working under the standing assumptions stated in
Sections~\ref{sec:gausspr} and~\ref{sec:standing}.

Before having formulated an answer to Problem~\ref{pr}, we observe
that in the case $J\ne I$ the auxiliary $(\boldsymbol A,\boldsymbol
a,g,\chi,J)$-problem can equivalently be formulated in terms of
orthogonal projections~$P_{A_{CJ}}$ onto the convex cone\/~$\mathcal
E^+_{A_{CJ}}$ in the sense of the following lemma.\footnote{Note
that $\mathcal E^+_{A_{CJ}}\ne\varnothing$ due to the fact that
$C(A_i)>0$ for all $i\in I$ (see the footnote to
formula~(\ref{G:finite})), so that for any $\nu\in\mathcal E$ an
orthogonal projection $P_{A_{CJ}}\nu$ exists and is determined
uniquely (see~Section~\ref{sec:proj}).}

Given $\boldsymbol A$ and $J\ne I$, define $\boldsymbol
a_J:=(a_i)_{i\in J}$ and $\boldsymbol A_J:=(A_i)_{i\in J}$, the
latter being thought of as an $(I^+,I^-\cap J)$-condenser. For any
$\boldsymbol{\mu}=(\mu^i)_{i\in I}\in\mathfrak M^+(\boldsymbol A)$,
also write $\boldsymbol{\mu}_{J}:=(\mu^i)_{i\in J}$.

With these assumptions and notations, consider the minimum energy
problem
\begin{equation}\label{pr:pr}\inf_{\boldsymbol\nu\in\mathcal E^+(\boldsymbol A_J,\boldsymbol
a_J,g)}\,\bigl\|\chi+R\boldsymbol\nu-P_{A_{CJ}}(\chi+R\boldsymbol\nu)\bigr\|^2.\end{equation}

\begin{lemma}\label{aux1}For\/~$\tilde{\boldsymbol\lambda}$ to solve the\/ $(\boldsymbol
A,\boldsymbol a,g,\chi,J)$-problem with\/ $J\ne I$, it is necessary
and sufficient that there exist a solution\/~$\boldsymbol\sigma$ to
the problem\/~{\rm(\ref{pr:pr})} such that
\begin{equation}\label{relrel}\tilde{\boldsymbol\lambda}_J=\boldsymbol\sigma\quad\text{and}\quad
R\tilde{\boldsymbol\lambda}_{CJ}=P_{A_{CJ}}(\chi+R\boldsymbol\sigma).\end{equation}
\end{lemma}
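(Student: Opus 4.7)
The plan is to reduce the $(\boldsymbol A,\boldsymbol a,g,\chi,J)$-problem to the auxiliary problem~(\ref{pr:pr}) by performing the inner minimization over the unconstrained components $\boldsymbol\mu_{CJ}$ explicitly, via orthogonal projection in the pre-Hilbert space~$\mathcal E$.

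First, by identity~(\ref{gmu}) the $(\boldsymbol A,\boldsymbol a,g,\chi,J)$-problem is equivalent to minimizing $\|\chi+R\boldsymbol\mu\|^2$ over $\boldsymbol\mu\in\mathcal E^+(\boldsymbol A,\boldsymbol a,g,J)$. I would write any admissible $\boldsymbol\mu$ as the pair $(\boldsymbol\mu_J,\boldsymbol\mu_{CJ})$, where the mass constraints $\langle g,\mu^i\rangle=a_i$, $i\in J$, are carried by $\boldsymbol\mu_J\in\mathcal E^+(\boldsymbol A_J,\boldsymbol a_J,g)$, while $\boldsymbol\mu_{CJ}$ ranges freely over $\mathcal E^+(\boldsymbol A_{CJ})$. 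Since $I^+\subseteq J$, the complementary index set $CJ$ lies in $I^-$, so $\boldsymbol A_{CJ}$ is a condenser with only negatively signed plates, and Corollary~\ref{cor:EA} identifies $R\mathcal E^+(\boldsymbol A_{CJ})$ with $-\mathcal E^+_{A_{CJ}}$.

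Second, fix $\boldsymbol\mu_J$ and set $\nu:=\chi+R\boldsymbol\mu_J\in\mathcal E$. Under the substitution $\omega:=-R\boldsymbol\mu_{CJ}\in\mathcal E^+_{A_{CJ}}$, which ranges over all of $\mathcal E^+_{A_{CJ}}$ by the identification just made, one has $\|\chi+R\boldsymbol\mu\|^2=\|\nu-\omega\|^2$. Since $C(A_i)>0$ for every $i\in I$ (and hence $\mathcal E^+_{A_{CJ}}\ne\varnothing$ by Lemma~\ref{Lemma2.3.1}) and $\kappa$ is perfect, the orthogonal projection $P_{A_{CJ}}\nu$ onto the strongly complete convex cone $\mathcal E^+_{A_{CJ}}$ exists and is unique (Section~\ref{sec:proj}); accordingly the inner infimum over $\boldsymbol\mu_{CJ}$ equals $\|\nu-P_{A_{CJ}}\nu\|^2$ and is attained precisely when $-R\boldsymbol\mu_{CJ}=P_{A_{CJ}}(\chi+R\boldsymbol\mu_J)$. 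Substituting back, the remaining minimization over $\boldsymbol\mu_J$ is exactly~(\ref{pr:pr}).

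This two-step reduction delivers both directions of the lemma. For necessity, given a minimizer $\tilde{\boldsymbol\lambda}$ of the $(\boldsymbol A,\boldsymbol a,g,\chi,J)$-problem, the optimality of $\tilde{\boldsymbol\lambda}_{CJ}$ in the inner problem (with $\boldsymbol\mu_J=\tilde{\boldsymbol\lambda}_J$ held fixed) together with the uniqueness of the orthogonal projection forces the projection identity in~(\ref{relrel}), and then $\boldsymbol\sigma:=\tilde{\boldsymbol\lambda}_J$ must solve~(\ref{pr:pr}). For sufficiency, given $\boldsymbol\sigma$ solving~(\ref{pr:pr}), Corollary~\ref{cor:EA} furnishes some $\boldsymbol\tau\in\mathcal E^+(\boldsymbol A_{CJ})$ with $-R\boldsymbol\tau=P_{A_{CJ}}(\chi+R\boldsymbol\sigma)$, and the concatenation $\tilde{\boldsymbol\lambda}:=(\boldsymbol\sigma,\boldsymbol\tau)$ minimizes $\|\chi+R\boldsymbol\mu\|^2$, hence $G_\chi(\boldsymbol\mu)$. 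No serious obstacle arises; the one substantive ingredient is the existence and uniqueness of $P_{A_{CJ}}$, which rests on the perfectness of~$\kappa$ (Theorem~\ref{th:1}).
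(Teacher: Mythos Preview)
Your proof is correct and follows essentially the same route as the paper's: both reduce the problem via identity~(\ref{gmu}) to minimizing $\|\chi+R\boldsymbol\mu\|^2$, split $R\boldsymbol\mu=R\boldsymbol\mu_J+R\boldsymbol\mu_{CJ}$, and recognize the inner minimization over the unconstrained $CJ$-components as orthogonal projection onto~$\mathcal E^+_{A_{CJ}}$, invoking Corollary~\ref{cor:EA} for the $R$-preimage. Your exposition is somewhat more explicit than the paper's (in particular you keep careful track of the sign, writing $-R\boldsymbol\mu_{CJ}=P_{A_{CJ}}(\chi+R\boldsymbol\mu_J)$, whereas the paper's display suppresses the minus), but the argument is the same.
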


\begin{proof} Fix $\boldsymbol{\mu}\in\mathcal
E^+(\boldsymbol{A},\boldsymbol{a},g,J)$ and let
$\boldsymbol\omega=(\omega^i)_{i\in CJ}$ be one of the $R$-preimages
of~$P_{A_{CJ}}(\chi+R\boldsymbol{\mu}_J)$. Consider
$\boldsymbol{\mu}'$ such that
$\boldsymbol{\mu}'_J=\boldsymbol{\mu}_J$ and
$\boldsymbol{\mu}'_{CJ}=\boldsymbol\omega$. Then, by
Corollaries~\ref{lemma3.3} and~\ref{cor:EA},
$\boldsymbol{\mu}'\in\mathcal E^+(\boldsymbol{A})$; actually,
$\boldsymbol{\mu}'\in\mathcal
E^+(\boldsymbol{A},\boldsymbol{a},g,J)$. Therefore, using
representation~(\ref{gmu}), we get
\begin{align*}\label{gchi2}G_\chi(\boldsymbol{\mu})&=-\|\chi\|^2+\|\chi+R\boldsymbol{\mu}\|^2\geqslant
-\|\chi\|^2+\|\chi+R\boldsymbol{\mu}_{J}-P_{A_{CJ}}(\chi+R\boldsymbol{\mu}_{J})\|^2\\{}&=
G_\chi(\boldsymbol{\mu}')\geqslant
G_\chi(\boldsymbol{A},\boldsymbol{a},g,J),\end{align*} where the
first inequality is an equality if and only if
$R\boldsymbol\mu_{CJ}= P_{A_{CJ}}(\chi+R\boldsymbol\mu_J)$. In view
of the arbitrary choice of $\boldsymbol{\mu}\in\mathcal
E^+(\boldsymbol{A},\boldsymbol{a},g,J)$,
this yields the lemma.
\end{proof}

\subsection{Main results}

Let $L$ consist of all $\ell\in I$ such that $C(A_\ell)=\infty$.
Assume $L\ne\varnothing$, for otherwise Problem~\ref{pr} has already
been solved by Theorem~\ref{th:suff}. In all that follows, we also
suppose that $L\subseteq I^-$.

A description of the set $\mathcal S_\kappa(\boldsymbol A,g,\chi)$,
required in Problem~\ref{pr}, is given by
Theorems~\ref{th:solvable1} and~\ref{th:solvable2}, the main results
of this study. It is formulated in terms of a solution to the
auxiliary $(\boldsymbol A,\boldsymbol a,g,\chi,CL)$-problem, while
the solvability of the latter is guarantied by the following theorem
with $J=CL$.

\begin{theorem}\label{lemma:exist}
Given\/ $J$, where\/ $I^+\subseteq J\subseteq I$, assume that
\begin{equation}\label{capfinite}
C(A_j)<\infty\quad\text{for all \ } j\in J.
\end{equation}
Then the\/ $(\boldsymbol A,\boldsymbol a,g,\chi,J)$-problem is
solvable, and the class\/ $\mathfrak S_\chi(\boldsymbol
A,\boldsymbol a,g,J)$ of all its solutions is vaguely compact.
\end{theorem}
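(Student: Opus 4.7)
My plan is to apply the direct method in the strong topology of $\mathcal E^+(\boldsymbol A)$, using the Hilbert-type identity $G_\chi(\boldsymbol{\mu})=-\|\chi\|^2+\|\chi+R\boldsymbol{\mu}\|^2$ from~(\ref{gmu}), the isometry~(\ref{twonorms}), the completeness Theorem~\ref{th:complete}, and the orthogonal-projection reduction Lemma~\ref{aux1}. When $J=I$ the assumption $C(A_j)<\infty$ for every $j\in J$ coincides with the hypothesis of Theorem~\ref{th:suff}, so the conclusion is immediate; henceforth I assume $J\neq I$, whence $CJ\subseteq I\setminus J\subseteq I^-$ by the standing hypothesis $L\subseteq I^-$.

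\textbf{Step 1 (mass bounds for a minimizing net).} Given any minimizing net $(\boldsymbol\mu_s)\subset\mathcal E^+(\boldsymbol A,\boldsymbol a,g,J)$, I first pass to a more tractable one. Using Lemma~\ref{aux1} together with Corollary~\ref{cor:EA}, I replace each $\boldsymbol\mu_s$ by $\boldsymbol\mu_s'$ with $\boldsymbol\mu_{s,J}'=\boldsymbol\mu_{s,J}$ and with $\boldsymbol\mu_{s,CJ}'$ an $R$-preimage of the orthogonal projection of $\chi+R\boldsymbol\mu_{s,J}$ onto $\mathcal E^+_{A_{CJ}}$; by the minimizing property of the projection, $G_\chi(\boldsymbol\mu_s')\leq G_\chi(\boldsymbol\mu_s)$. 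For $j\in J$ the constraint and $g\geq g_{\inf}>0$ give $\mu_s'^{\,j}(A_j)\leq a_j/g_{\inf}$, while Lemma~\ref{lemma:bound} bounds $\sum_{i\in CJ}\mu_s'^{\,i}(\mathrm X)$ by $h\bigl[\chi^+(\mathrm X)+|\boldsymbol a|/g_{\inf}\bigr]$; its hypotheses are supplied by the standing assumptions (boundedness of $\chi$, disjointness of $S(\chi^{\pm})$ from $A^{\mp}$, boundedness of $\kappa$ on $A^+\times A^-$, and $I^+\subseteq J$). Hence $(\boldsymbol\mu_s')\subset\mathcal E^+_b(\boldsymbol A)$ for some fixed $b<\infty$.

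\textbf{Step 2 (strong Cauchy property; passage to a limit).} Since $\mathcal E^+(\boldsymbol A,\boldsymbol a,g,J)$ is convex, the midpoint $(\boldsymbol\mu_s'+\boldsymbol\mu_t')/2$ is still admissible, so the parallelogram identity in $\mathcal E$ applied to $\chi+R\boldsymbol\mu_s'$ and $\chi+R\boldsymbol\mu_t'$, together with~(\ref{gmu}) and~(\ref{twonorms}), yields
\[
\|\boldsymbol\mu_s'-\boldsymbol\mu_t'\|^2_{\mathcal E^+(\boldsymbol A)}\leq 2G_\chi(\boldsymbol\mu_s')+2G_\chi(\boldsymbol\mu_t')-4G_\chi(\boldsymbol A,\boldsymbol a,g,J),
\]
whose right-hand side tends to $0$ along the minimizing net. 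Vague compactness of $\mathfrak M^+_b(\boldsymbol A)$ supplies a vague cluster point $\tilde{\boldsymbol\lambda}$ of $(\boldsymbol\mu_s')$; Theorem~\ref{th:complete}(i) then provides $\tilde{\boldsymbol\lambda}\in\mathcal E^+_b(\boldsymbol A)$ and strong convergence $\boldsymbol\mu_s'\to\tilde{\boldsymbol\lambda}$, so $G_\chi(\tilde{\boldsymbol\lambda})=G_\chi(\boldsymbol A,\boldsymbol a,g,J)$ by continuity of $G_\chi$ in the strong topology.

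\textbf{Step 3 (main obstacle and vague compactness).} The remaining, most delicate point is to verify $\langle g,\tilde\lambda^{\,j}\rangle=a_j$ for $j\in J$: Lemma~\ref{lemma:lower} and $g\geq g_{\inf}$ only give the inequality $\langle g,\tilde\lambda^{\,j}\rangle\leq a_j$, and the reverse one can fail a priori if mass of $\mu_s'^{\,j}$ escapes to infinity. This is the principal hurdle, handled precisely by the standing growth hypothesis~(\ref{growth}) (or, in the alternative, boundedness of $g|_{A_j}$): together with the strong convergence $\mu_s'^{\,j}\to\tilde\lambda^{\,j}$, which by the Cauchy--Schwarz inequality entails $\kappa(\mu_s'^{\,j},\nu_j)\to\kappa(\tilde\lambda^{\,j},\nu_j)$, a H\"older-type estimate with exponents $r_j$ and $r_j/(r_j-1)$ upgrades vague lower semicontinuity of $\mu\mapsto\langle g,\mu\rangle$ to continuity along $(\mu_s'^{\,j})$ and yields equality. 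Finally, every element of $\mathfrak S_\chi(\boldsymbol A,\boldsymbol a,g,J)$ lies in $\mathcal E^+_b(\boldsymbol A)$ by the same Lemma~\ref{aux1}--Lemma~\ref{lemma:bound} argument, and both the constraint equalities and the minimality of $G_\chi$ are preserved under vague limits thanks to the very same growth tools; hence $\mathfrak S_\chi(\boldsymbol A,\boldsymbol a,g,J)$ is a vaguely closed subset of the vaguely compact set $\mathfrak M^+_b(\boldsymbol A)$, and is therefore vaguely compact.
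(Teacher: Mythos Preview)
Your Steps~1 and~2 are essentially the content of Lemma~\ref{lemma:H} and Lemma~\ref{lemma:WM}(i), and they are correct. The genuine gap is in Step~3.

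First, the asserted \emph{strong} convergence $\mu_s'^{\,j}\to\tilde\lambda^{\,j}$ of individual components does not follow from what you have proved. Strong convergence in $\mathcal E^+(\boldsymbol A)$ means $\|R\boldsymbol\mu_s'-R\tilde{\boldsymbol\lambda}\|_{\mathcal E}\to0$, cf.~(\ref{twonorms}); this says nothing about components separately. What you do have, via property~(C$_2$) applied to the strongly bounded, vaguely convergent nets $(\mu_s'^{\,j})$ (cf.~(\ref{twostars})), is \emph{weak} convergence, which does yield $\kappa(\mu_s'^{\,j},\nu_j)\to\kappa(\tilde\lambda^{\,j},\nu_j)$.

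Second, and more seriously, your ``H\"older-type upgrade'' is not spelled out, and it cannot work without explicitly invoking the hypothesis $C(A_j)<\infty$, which never appears in your Step~3. The growth condition $g^{r_j}\leqslant\kappa(\cdot,\nu_j)$ together with weak convergence only bounds $\int g^{r_j}\,d\mu_s'^{\,j}$ uniformly; it does not by itself prevent $g$-mass from escaping to infinity along~$A_j$. What is needed is spatial tightness: $\langle g\chi_{A_j\setminus K},\mu_s'^{\,j}\rangle\to0$ uniformly in~$s$ as $K\uparrow A_j$. The paper achieves this via the interior equilibrium measures~$\theta_{K^c_j}$ of Theorem~\ref{thF:1}: from $\kappa(\cdot,\theta_{K^c_j})\geqslant1$ n.e.\ on~$K^c_j$ one gets
\[
g\chi_{K^c_j}\leqslant\kappa(\cdot,\nu_j)^{1/r_j}\kappa(\cdot,\theta_{K^c_j})^{1/q_j}\quad\text{n.e.,}
\]
and hence, by H\"older and Cauchy--Schwarz, $\langle g\chi_{K^c_j},\mu_s'^{\,j}\rangle\leqslant\|\nu_j\|^{1/r_j}\|\theta_{K^c_j}\|^{1/q_j}\|\mu_s'^{\,j}\|$. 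The decisive point is that $\|\theta_{K^c_j}\|\to0$ as $K_j\uparrow A_j$, and \emph{this} is precisely where $C(A_j)<\infty$ enters (the net $(\theta_{K^c_j})$ is strongly Cauchy with vague limit~$0$; see the proof of Lemma~\ref{lemma:lemma}). Without this mechanism your Step~3 has a real hole, and the same hole reappears in your argument that the constraint equalities are preserved under vague limits when establishing vague compactness of~$\mathfrak S_\chi(\boldsymbol A,\boldsymbol a,g,J)$.
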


Fix a solution~$\tilde{\boldsymbol\lambda}$ to the auxiliary
$(\boldsymbol A,\boldsymbol a,g,\chi,CL)$-prob\-lem. Then,
equivalently, $\tilde{\boldsymbol\lambda}_{CL}$ gives a solution to
the minimum energy problem~(\ref{pr:pr}) with $J=CL$, while
$\tilde{\boldsymbol\lambda}_L$ is one of the $R$-pre\-images of
$P_{A_L}(\chi+R\tilde{\boldsymbol\lambda}_{CL})$; see
Lemma~\ref{aux1}.

\begin{theorem}\label{th:solvable1} Consider\/ $\boldsymbol a=(a_i)_{i\in I}$ such
that\/\footnote{The values $\langle g,\tilde{\lambda}^\ell\rangle$,
$\ell\in L$, can be shown to be finite
(see~Section~\ref{proof:th:solvable1}), so that
inequalities~(\ref{br}) make sense.}
\begin{equation}\label{br}a_\ell\geqslant\langle
g,\tilde{\lambda}^\ell\rangle\quad\text{for all \ }\ell\in
L.\end{equation} Then
\begin{equation}\label{LLL}G_\chi(\boldsymbol A,\boldsymbol a,g)=G_\chi(\boldsymbol A,\boldsymbol a,g,CL),\end{equation}
while the main\/ $(\boldsymbol A,\boldsymbol a,g,\chi)$-problem is
solvable if and only if all the inequalities\/~{\rm (\ref{br})} are
equalities.
\end{theorem}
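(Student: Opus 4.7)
The plan is to first establish the identity~(\ref{LLL}) by a density argument, and then deduce the solvability dichotomy from the strict convexity of $\|\chi+R\boldsymbol\mu\|^2$ in~$R\boldsymbol\mu$.

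For~(\ref{LLL}), the inclusion $\mathcal E^+(\boldsymbol A,\boldsymbol a,g)\subset\mathcal E^+(\boldsymbol A,\boldsymbol a,g,CL)$ yields $G_\chi(\boldsymbol A,\boldsymbol a,g)\geqslant G_\chi(\boldsymbol A,\boldsymbol a,g,CL)$, so it suffices to exhibit $\boldsymbol\mu_n\in\mathcal E^+(\boldsymbol A,\boldsymbol a,g)$ with $G_\chi(\boldsymbol\mu_n)\to G_\chi(\tilde{\boldsymbol\lambda})$. Write $b_\ell:=a_\ell-\langle g,\tilde\lambda^\ell\rangle\geqslant 0$; by the cited footnote these are finite, and summable since $\sum_\ell b_\ell\leqslant|\boldsymbol a|<\infty$. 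For each $\ell\in L$ I would construct a compactly supported $\omega_{\ell,n}\in\mathcal E^+_{A_\ell}$ with $\langle g,\omega_{\ell,n}\rangle=b_\ell$ and $\|\omega_{\ell,n}\|^2\leqslant Cb_\ell^2/n$ for a constant $C$ independent of~$\ell$. This is feasible because $C(A_\ell)=\infty$ delivers probability measures on $A_\ell$ of arbitrarily small energy, whose restriction to a sufficiently large compact subset of~$A_\ell$ retains at least half the mass (the kernel being $\geqslant0$, such a restriction cannot increase the energy); rescaling by $b_\ell/\langle g,\cdot\rangle$ --- legitimate since $g\geqslant g_{\inf}>0$ and $g$ is bounded on compacta --- yields $\omega_{\ell,n}$ as stated. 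Setting $\mu_n^i:=\tilde\lambda^i$ for $i\in CL$ and $\mu_n^\ell:=\tilde\lambda^\ell+\omega_{\ell,n}$ for $\ell\in L$, the $g$-weight equals $a_i$ for every $i\in I$, while $R\boldsymbol\mu_n-R\tilde{\boldsymbol\lambda}=-\sum_{\ell\in L}\omega_{\ell,n}$ has strong norm at most $\sum_\ell\|\omega_{\ell,n}\|\leqslant\sqrt{C/n}\,\sum_\ell b_\ell\to 0$. Hence $\boldsymbol\mu_n\in\mathcal E^+(\boldsymbol A,\boldsymbol a,g)$, and Cauchy--Schwarz together with~(\ref{gmu}) gives $G_\chi(\boldsymbol\mu_n)\to G_\chi(\tilde{\boldsymbol\lambda})=G_\chi(\boldsymbol A,\boldsymbol a,g,CL)$, establishing~(\ref{LLL}).

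For the solvability criterion, the ``if'' direction is immediate: when all inequalities in~(\ref{br}) are equalities, $\tilde{\boldsymbol\lambda}\in\mathcal E^+(\boldsymbol A,\boldsymbol a,g)$, and by~(\ref{LLL}) it is already a minimizer over this smaller class. For the converse, suppose a minimizer $\boldsymbol\lambda$ of the $(\boldsymbol A,\boldsymbol a,g,\chi)$-problem exists. Then~(\ref{LLL}) forces $G_\chi(\boldsymbol\lambda)=G_\chi(\boldsymbol A,\boldsymbol a,g,CL)=G_\chi(\tilde{\boldsymbol\lambda})$, so $\boldsymbol\lambda$ also solves the auxiliary $(\boldsymbol A,\boldsymbol a,g,\chi,CL)$-problem. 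Both $\boldsymbol\lambda$ and $\tilde{\boldsymbol\lambda}$ thus minimize $\boldsymbol\mu\mapsto\|\chi+R\boldsymbol\mu\|^2$ over the convex set $\mathcal E^+(\boldsymbol A,\boldsymbol a,g,CL)$; since strict positive definiteness makes $\|\cdot\|$ an actual norm on~$\mathcal E$, the parallelogram identity forces $R\boldsymbol\lambda=R\tilde{\boldsymbol\lambda}$ in~$\mathcal E$. Combining the Hahn--Jordan decomposition~(\ref{HJ}) with the disjointness~(\ref{non}) of $A^+$ and $A^-$ gives $\sum_{i\in I^-}\lambda^i=\sum_{i\in I^-}\tilde\lambda^i$ as scalar measures; integrating $g$ and cancelling the common contributions from $i\in I^-\setminus L\subset CL$ (where $\langle g,\lambda^i\rangle=a_i=\langle g,\tilde\lambda^i\rangle$), I obtain $\sum_{\ell\in L}a_\ell=\sum_{\ell\in L}\langle g,\tilde\lambda^\ell\rangle$, which in view of~(\ref{br}) forces equality for every $\ell\in L$.

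The main technical obstacle is the perturbation step: prescribing the $g$-weight $b_\ell$ while simultaneously driving the energy to zero at a rate uniform in $\ell\in L$, which may be an infinite index set. The infinite capacity $C(A_\ell)=\infty$ delivers low-energy measures on each~$A_\ell$, the lower bound $g_{\inf}>0$ converts small energy into a controlled $g$-integral denominator after rescaling, and the finiteness $\sum_\ell b_\ell\leqslant|\boldsymbol a|<\infty$ passes the single-plate estimate to the full sum. Once this is set up, both the limit underlying~(\ref{LLL}) and the identification $R\boldsymbol\lambda=R\tilde{\boldsymbol\lambda}$ in the converse reduce to manipulations in the pre-Hilbert space~$\mathcal E$ via Cauchy--Schwarz and strict convexity, with no further appeal to the finer topological machinery of Section~\ref{sec:prel}.
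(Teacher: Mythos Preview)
Your argument is correct. The perturbation step establishing~(\ref{LLL}) is essentially the paper's own construction: both add to~$\tilde{\boldsymbol\lambda}$, on each plate~$A_\ell$ with $\ell\in L$, a small-energy measure of prescribed $g$-mass $a_\ell-\langle g,\tilde\lambda^\ell\rangle$, exploiting $C(A_\ell)=\infty$ and $g\geqslant g_{\inf}>0$; the uniform-in-$\ell$ bound $\|\omega_{\ell,n}\|\leqslant C^{1/2}b_\ell n^{-1/2}$ with $C=4g_{\inf}^{-2}$ is exactly what the paper records as~(\ref{on}).

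Where you diverge is in the ``only if'' direction. The paper arranges in addition that the perturbing measures tend to zero \emph{vaguely}, so that the minimizing sequence $\boldsymbol\mu_n$ converges to~$\tilde{\boldsymbol\lambda}$ in both topologies; this places $\tilde{\boldsymbol\lambda}$ in the class $\mathfrak E_\chi(\boldsymbol A,\boldsymbol a,g)$ of extremal measures, and Corollary~\ref{extr.min} (built on Lemma~\ref{lemma:WM}) then says that solvability forces $\tilde{\boldsymbol\lambda}$ itself to be a minimizer, hence admissible. Your route is more elementary: given a minimizer $\boldsymbol\lambda$, identity~(\ref{LLL}) makes it also a minimizer in the auxiliary $CL$-problem, and the parallelogram identity in the strictly positive definite setting yields $R\boldsymbol\lambda=R\tilde{\boldsymbol\lambda}$ directly (this is the content of Corollary~\ref{cor:uniqu}, rederived from scratch); the equality of negative parts via~(\ref{HJ}) and the finiteness $\sum_{\ell\in L}a_\ell\leqslant|\boldsymbol a|<\infty$ then finish the job. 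This bypasses the vague-convergence requirement on the perturbations and the entire extremal-measure apparatus of Section~\ref{sec:extr}. What the paper's approach buys in return is the stronger conclusion that $\tilde{\boldsymbol\lambda}\in\mathfrak E_\chi(\boldsymbol A,\boldsymbol a,g)$ regardless of solvability --- information not needed for Theorem~\ref{th:solvable1} itself but consonant with the paper's systematic development of extremal measures.
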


In the case where $L$ is a singleton, Theorem~\ref{th:solvable1} can
be refined as follows to establish a {\it complete description\/} of
the set $\mathcal S_\kappa(\boldsymbol A,g,\chi)$.

\begin{theorem}\label{th:solvable2} Let\/ $L=\{\ell\}$ and assume, moreover, that
\begin{equation}\label{nonint}A_i\cap
A_\ell=\varnothing\quad\text{for all \ }i\ne\ell.\end{equation} Then
the set\/ $\mathcal S_\kappa(\boldsymbol A,g,\chi)$ consists of all
vectors\/ $\boldsymbol a=(a_i)_{i\in I}$ with the property
\begin{equation*}\label{aell}
a_\ell\leqslant\langle g,\tilde{\lambda}^\ell\rangle
\end{equation*}
or, equivalently,
\begin{equation}\label{aell''}
a_\ell\leqslant\Bigl\langle
g,P_{A_\ell}\Bigl(\chi+\sum_{i\ne\ell}\,\alpha_i\tilde{\boldsymbol\lambda}^i\Bigr)\Bigr\rangle=:\mathrm\Sigma_\ell.
\end{equation}
\end{theorem}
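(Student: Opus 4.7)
The plan is to reduce the two ``easy'' cases ($a_\ell > \Sigma_\ell$ and $a_\ell = \Sigma_\ell$) to Theorem~\ref{th:solvable1}, and then attack the residual strict case $a_\ell < \Sigma_\ell$ by a direct minimizing-net argument relying on the completeness Theorem~\ref{th:complete}. As a preliminary, one verifies the equivalence of the two bounds in~(\ref{aell''}): Lemma~\ref{aux1} applied with $J = CL$ (so that $CJ = L = \{\ell\}$ and, by~(\ref{nonint}), $A_\ell$ is disjoint from all other plates) identifies $\tilde{\lambda}^\ell$ as the unique $R$-preimage of $P_{A_\ell}(\chi + R\tilde{\boldsymbol\lambda}_{CL})$, yielding $\tilde{\lambda}^\ell = P_{A_\ell}\bigl(\chi + \sum_{i\neq\ell}\alpha_i\tilde{\lambda}^i\bigr)$. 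A decisive observation, used freely in what follows, is that $\Sigma_\ell$ depends only on $(a_i)_{i\neq\ell}$, since the $(\boldsymbol A,\boldsymbol a,g,\chi,CL)$-problem imposes no constraint on $\mu^\ell$.

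For the necessity direction, if $a_\ell > \Sigma_\ell$, the single inequality~(\ref{br}) is strict, so Theorem~\ref{th:solvable1} delivers nonsolvability of the main problem, hence $\boldsymbol a \notin \mathcal S_\kappa(\boldsymbol A,g,\chi)$. For sufficiency in the equality case $a_\ell = \Sigma_\ell$, Theorem~\ref{th:solvable1} yields $\tilde{\boldsymbol\lambda} \in \mathcal E^+(\boldsymbol A,\boldsymbol a,g)$ as a minimizer of the main problem.

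The heart of the argument is the strict case $a_\ell < \Sigma_\ell$. For any minimizing net $(\boldsymbol\mu_s)_{s\in S} \subset \mathcal E^+(\boldsymbol A,\boldsymbol a,g)$ for $G_\chi$, the bound $g \geq g_{\inf} > 0$ places the net in $\mathcal E_b^+(\boldsymbol A)$ with $b = |\boldsymbol a|/g_{\inf}$. The identity $G_\chi(\boldsymbol\mu) = \|\chi + R\boldsymbol\mu\|^2 - \|\chi\|^2$ combined with Theorem~\ref{lemma:semimetric} lets one apply the parallelogram identity in the pre-Hilbert space $\mathcal E$ to the convex set $\chi + R\mathcal E^+(\boldsymbol A,\boldsymbol a,g)$, forcing $(\boldsymbol\mu_s)$ to be strongly Cauchy. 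Theorem~\ref{th:complete} then supplies a vague cluster point $\boldsymbol\lambda \in \mathcal E_b^+(\boldsymbol A)$ to which the net converges both strongly and vaguely, and strong continuity of the energy yields $G_\chi(\boldsymbol\lambda) = G_\chi(\boldsymbol A,\boldsymbol a,g)$.

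The main obstacle is to verify that $\boldsymbol\lambda$ is admissible, i.e.\ $\langle g,\lambda^i\rangle = a_i$ for each $i\in I$, and not merely the inequality $\leq a_i$ supplied by vague lower semicontinuity (Lemma~\ref{lemma:lower}). For $i \in CL$ the finite interior capacity $C(A_i) < \infty$, together with the growth condition~(\ref{growth}) (or the boundedness of $g$ on $A_i$) and the equilibrium-measure techniques of Section~\ref{sec:proj}, forbids mass escape. The truly delicate index is $i = \ell$, where the noncompact infinite-capacity plate could in principle leak mass to infinity; it is precisely the strict hypothesis $a_\ell < \Sigma_\ell$ that rules this out, via a comparison of $\boldsymbol\lambda$ with the CL-minimizer $\tilde{\boldsymbol\lambda}$ using the variational characterization $\tilde{\lambda}^\ell = P_{A_\ell}(\chi + R\tilde{\boldsymbol\lambda}_{CL})$, the uniqueness (up to $R$-equivalence) of the CL-minimizer, and the disjointness~(\ref{nonint}) which on $A_\ell$ forces any $R$-coinciding measure to equal $\tilde{\lambda}^\ell$. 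This technical verification constitutes the bulk of the proof.
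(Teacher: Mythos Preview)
Your reduction of the cases $a_\ell>\Sigma_\ell$ and $a_\ell=\Sigma_\ell$ to Theorem~\ref{th:solvable1} is correct and matches the paper. The gap lies in the strict case $a_\ell<\Sigma_\ell$, specifically in your handling of the index~$\ell$.

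You propose to rule out mass loss on~$A_\ell$ ``via a comparison of $\boldsymbol\lambda$ with the CL-minimizer $\tilde{\boldsymbol\lambda}$ using \ldots\ the uniqueness (up to $R$-equivalence) of the CL-minimizer''. But your extremal measure $\boldsymbol\lambda$ is \emph{not} a CL-minimizer: it satisfies $G_\chi(\boldsymbol\lambda)=G_\chi(\boldsymbol A,\boldsymbol a,g)$, and when $a_\ell<\Sigma_\ell$ this value is in general \emph{strictly larger} than $G_\chi(\boldsymbol A,\boldsymbol a,g,CL)$ (the equality~(\ref{LLL}) is asserted only under~(\ref{br})). So there is no $R$-coincidence to invoke, and your sketch provides no mechanism linking the hypothesis $a_\ell<\Sigma_\ell$ to the conclusion $\langle g,\lambda^\ell\rangle=a_\ell$.

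The paper closes this gap with two ingredients you do not mention. First, rather than a generic minimizing net, it constructs a \emph{specific} one via compact exhaustion of~$A_\ell$: for each compact $K_\ell\subset A_\ell$ the condenser $\boldsymbol A_{K_\ell}$ has all plates of finite capacity, so a genuine minimizer $\boldsymbol\lambda_{\boldsymbol A_{K_\ell}}$ exists; a continuity lemma shows these form a minimizing net for the full problem. Second, the paper develops a description of the $\boldsymbol f$-weighted potential of an extremal measure (Section~\ref{sec:descr}): the inequality $a_iW^i_{\boldsymbol\gamma}\geqslant\langle W^i_{\boldsymbol\gamma},\gamma^i\rangle\,g$ n.e.\ on~$A_i$ holds for any extremal~$\boldsymbol\gamma$, while the reverse inequality on $S(\gamma^i)$ is obtained by passing to the limit in the corresponding support inequality for each $\boldsymbol\lambda_{\boldsymbol A_{K_\ell}}$ (this is where the exhaustion is essential). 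Integrating the resulting equality on $S(\gamma^\ell)$ gives $\langle g,\gamma^\ell\rangle=a_\ell$ \emph{provided} the ``Lagrange multiplier'' $\langle W^\ell_{\boldsymbol\gamma},\gamma^\ell\rangle$ is nonzero. A separate lemma shows that if it \emph{were} zero, then $\boldsymbol\gamma$ would in fact be a CL-minimizer, whence $\gamma^\ell=\tilde\lambda^\ell$ by~(\ref{nonint}) and $\langle g,\gamma^\ell\rangle=\Sigma_\ell>a_\ell$, contradicting~(\ref{inequal}). Thus the comparison with~$\tilde{\boldsymbol\lambda}$ that you gesture at does occur, but only inside a proof by contradiction, and only after the potential-theoretic machinery has reduced the question to the vanishing of a single scalar.
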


\begin{remark} Theorem~\ref{th:solvable2} makes sense, since, under its hypotheses,
the value $\mathrm\Sigma_\ell=\langle g,\tilde{\lambda}^\ell\rangle$
does not depend on the choice of
$\tilde{\boldsymbol\lambda}\in\mathfrak
S_\chi(\boldsymbol{A},\boldsymbol{a},g,I\setminus\{\ell\})$
(see~Section~\ref{proof:th:solvable2}).\end{remark}

See Sections~\ref{proof:lemma:exist}, \ref{proof:th:solvable1}, and
\ref{proof:th:solvable2} for the proofs of
Theorems~\ref{lemma:exist}--\ref{th:solvable2}. Combining
Theorem~\ref{th:solvable1} with Lemma~\ref{lemma:H} for $J=CL$ (see
Section~\ref{sec:J} below), we arrive at the following corollary.

\begin{corollary}\label{cor:solvable1} Assume\/ $L$ is finite and\/ $g_{\sup}:=\sup_{x\in A}\,g(x)<\infty$.
Then the\/ $(\boldsymbol A,\boldsymbol a,g,\chi)$-prob\-lem is
nonsolvable for every\/~$\boldsymbol a$ with
\[a_\ell>hg_{\sup}\Bigl[\chi^+(\mathrm X)+2|\boldsymbol a_{CL}|g_{\inf}^{-1}\Bigr]\quad\text{for all \ }\ell\in L,\]
$h$ being the constant in the generalized maximum principle.
\end{corollary}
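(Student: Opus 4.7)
My plan is to combine the necessity direction of Theorem~\ref{th:solvable1} with a quantitative bound on the total masses $\tilde\lambda^\ell(\mathrm X)$, $\ell\in L$, supplied by Lemma~\ref{lemma:H} with $J=CL$. Since $L$ consists precisely of those $\ell\in I$ with $C(A_\ell)=\infty$ while~(\ref{G:finite}) forces $C(A_i)>0$ for every $i\in I$, the complementary index set $CL$ collects exactly the indices of \emph{finite} positive capacity. Hence Theorem~\ref{lemma:exist} applies with $J=CL$ and produces a solution $\tilde{\boldsymbol\lambda}$ to the auxiliary $(\boldsymbol A,\boldsymbol a,g,\chi,CL)$-problem.

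The central step is an upper bound on $\langle g,\tilde\lambda^\ell\rangle$, uniform in $\ell\in L$. Lemma~\ref{lemma:H} (whose proof I would expect to proceed by first identifying $\sum_{\ell\in L}\tilde\lambda^\ell$ with $P_{A_L}(\chi+R\tilde{\boldsymbol\lambda}_{CL})$ via Lemma~\ref{aux1}, then invoking the bound $P_F\nu(\mathrm X)\leqslant h\nu^+(\mathrm X)$ of Lemma~\ref{lemma:bound} applied to $\nu=\chi+R\tilde{\boldsymbol\lambda}_{CL}$, and finally estimating $\nu^+(\mathrm X)$ by splitting it into the contributions of $\chi^+$ and of the positive part of $R\tilde{\boldsymbol\lambda}_{CL}$, each $\tilde\lambda^i(\mathrm X)$, $i\in CL$, being bounded by $a_i/g_{\inf}$ through $\langle g,\tilde\lambda^i\rangle=a_i$ and $g\geqslant g_{\inf}$) delivers
\[
\tilde\lambda^\ell(\mathrm X)\leqslant h\bigl[\chi^+(\mathrm X)+2|\boldsymbol a_{CL}|g_{\inf}^{-1}\bigr]\qquad\text{for each \ }\ell\in L.
\]
Combining with $g\leqslant g_{\sup}$ on $A$, this gives $\langle g,\tilde\lambda^\ell\rangle\leqslant hg_{\sup}\bigl[\chi^+(\mathrm X)+2|\boldsymbol a_{CL}|g_{\inf}^{-1}\bigr]$ for every $\ell\in L$.

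To conclude, fix $\boldsymbol a$ as in the corollary. The hypothesis together with the previous estimate implies
\[
a_\ell\;>\;hg_{\sup}\bigl[\chi^+(\mathrm X)+2|\boldsymbol a_{CL}|g_{\inf}^{-1}\bigr]\;\geqslant\;\langle g,\tilde\lambda^\ell\rangle\qquad\text{for every \ }\ell\in L.
\]
In particular, assumption~(\ref{br}) of Theorem~\ref{th:solvable1} is satisfied, but each of those inequalities is \emph{strict}. The iff-part of that theorem therefore forces the Gauss variational problem to be nonsolvable, as required. The only genuine obstacle is the mass estimate encapsulated in Lemma~\ref{lemma:H}; once it is in hand, the rest of the proof is a short direct substitution, and the factor $2$ in the final bound traces back to separating the $\chi^+$- and $R\tilde{\boldsymbol\lambda}_{CL}^+$-contributions to $\nu^+(\mathrm X)$ inside that lemma.
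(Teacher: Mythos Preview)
Your approach is correct and matches the paper's own one-line justification (``Combining Theorem~\ref{th:solvable1} with Lemma~\ref{lemma:H} for $J=CL$''): the mass bound $\tilde\lambda^\ell(\mathrm X)\leqslant H$ comes from Corollary~\ref{min:proj} (a direct consequence of Lemma~\ref{lemma:H}), multiplying by $g_{\sup}$ gives $\langle g,\tilde\lambda^\ell\rangle\leqslant g_{\sup}H$, and the strict inequality in the hypothesis then forces nonsolvability via Theorem~\ref{th:solvable1}. One minor remark: your parenthetical explanation of the factor~$2$ is slightly off --- it arises because $H$ bounds the \emph{total} mass $\sum_{i\in I}\tilde\lambda^i(\mathrm X)$, so one copy of $|\boldsymbol a_{CL}|g_{\inf}^{-1}$ accounts for the $CL$-coordinates directly and the second copy enters through $(\chi+R\tilde{\boldsymbol\lambda}_{CL})^+$ in the projection estimate; but this does not affect the validity of the argument.
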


\subsection{Examples}\label{sec:ex}

In this section, we withdraw all the standing assumptions stated in
Sections~\ref{sec:gausspr} and~\ref{sec:standing}.

To illustrate the results obtained, we consider the Riesz kernel
$\kappa_\alpha(x,y)=|x-y|^{\alpha-n}$ of order $\alpha\in(0,2]$,
$\alpha<n$, in~$\mathbb R^n$, $n\geqslant2$, and we assume that
$A_\ell$, where $\ell\in I^-$, is the only plate with infinite
interior capacity. For this kernel, the operator of orthogonal
projection~$P_{A_\ell}$ is, in fact, the operator of Riesz
balayage~$\beta^{\kappa_\alpha}_{A_\ell}$ onto~$A_\ell$, while (see,
e.g.,~\cite{L})
\[\|\nu-\beta^{\kappa_\alpha}_{A_\ell}\nu\|_{\kappa_\alpha}=\|\nu\|_{g^\alpha_{A_\ell^c}}
\quad\text{for all \ }\nu\in\mathcal E_{\kappa_\alpha},\] where
$g^\alpha_{A_\ell^c}$ is the $\alpha$-Green function of the open set
$A_\ell^c$.

Hence, the auxiliary problem~(\ref{pr:pr}) with
$J=I\setminus\{\ell\}$ can be rewritten as
\begin{equation}\label{pr:pr'}\inf\,\|\chi+R\boldsymbol\nu\|_{g^\alpha_{A_\ell^c}}^2,\end{equation}
the infimum being taken over all $\boldsymbol\nu\in\mathcal
E_{\kappa_\alpha}^+(\boldsymbol A_{I\setminus\{\ell\}})$ such that
$\langle g,\nu^i\rangle=a_i$ for all $i\ne\ell$, while
$\mathrm\Sigma_\ell$ from formula~(\ref{aell''}) now takes the form
\begin{equation*}\label{sigma}\mathrm\Sigma_\ell=\bigl\langle
g,\beta^{\kappa_\alpha}_{A_\ell}(\chi+R\boldsymbol\sigma)\bigr\rangle,\end{equation*}
where $\boldsymbol\sigma$ is a solution to the minimum
$\alpha$-Green energy problem~(\ref{pr:pr'}).

Combined with Theorem~\ref{th:solvable2}, this yields
Proposition~\ref{pr:ex:11} (see Example~\ref{ex:11}), providing a
complete description of the set $\mathcal
S_{\kappa_\alpha}(\boldsymbol A,g,\chi)$. See also
Examples~\ref{ex:12}--\ref{ex:14} below, where such a description
takes a much simpler form, given in terms of the geometry of the
plate~$A_\ell$.

In each of the following examples, let
$C_{\kappa_\alpha}(A_i)>\delta>0$ for all $i\in I$ and let
$\chi\in\mathcal E_{\kappa_\alpha}$ be compactly supported
in~$A^c$.\footnote{Then assumption~(\ref{G:finite}) holds
automatically (cf.~Remark~\ref{rem:fin}).} We also require that
$\chi\geqslant0$, ${\rm dist}\,(A^+,A^-)>0$, $I^-=L=\{\ell\}$,
$A_\ell^c$ is connected, and $g=1$.

\begin{example}\label{ex:12} Under the hypotheses stated above, assume moreover that
\begin{equation}\label{init}2a_\ell\geqslant|\boldsymbol a|+\chi(\mathbb
R^n).\end{equation}

\begin{proposition}\label{pr:ex:12}
Then the Gauss variational problem\/ {\rm(}relative to\/
$\kappa_\alpha$, $\boldsymbol A$, $\boldsymbol a$, $\chi\geqslant0$,
and\/~$g=1${\rm)} is solvable if and only if\/  $A_{\ell}$ is not\/
$\alpha$-thin at\/~$\infty_{\mathbb R^n}$, while
\begin{equation}\label{init'}2a_\ell=|\boldsymbol a|+\chi(\mathbb
R^n).\end{equation}
\end{proposition}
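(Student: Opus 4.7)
The plan is to reduce the question to a single mass balance for the Riesz balayage and then invoke Theorem~\ref{th:solvable2}. By that theorem (and the reformulation of $\mathrm\Sigma_\ell$ displayed just before Proposition~\ref{pr:ex:12}), the $(\boldsymbol A,\boldsymbol a,g,\chi)$-problem is solvable if and only if
$$
a_\ell\;\leqslant\;\mathrm\Sigma_\ell\;=\;\bigl\langle 1,\beta^{\kappa_\alpha}_{A_\ell}(\chi+R\boldsymbol\sigma)\bigr\rangle,
$$
where $\boldsymbol\sigma=(\sigma^i)_{i\ne\ell}$ is a solution to the auxiliary $\alpha$-Green energy problem~\eqref{pr:pr'}. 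The existence of such a~$\boldsymbol\sigma$ is granted by Theorem~\ref{lemma:exist} applied with $J=I\setminus\{\ell\}$, since $\ell$ is the only index in~$L$, so $C(A_j)<\infty$ for every $j\ne\ell$.

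The first step is to compute the total mass of $\nu:=\chi+R\boldsymbol\sigma$. Because $I^-=\{\ell\}$, we have $\alpha_i=+1$ for every $i\ne\ell$, whence $\nu=\chi+\sum_{i\ne\ell}\sigma^i$ is nonnegative and bounded. Since $g=1$, the normalizations read $\sigma^i(\mathbb R^n)=a_i$ for all $i\ne\ell$, and therefore
$$
\nu(\mathbb R^n)=\chi(\mathbb R^n)+\sum_{i\ne\ell}a_i=\chi(\mathbb R^n)+|\boldsymbol a|-a_\ell.
$$
Moreover $\nu$ is supported in~$A_\ell^c$: $\chi$ is compactly supported in $A^c\subseteq A_\ell^c$, and each $\sigma^i$ with $i\ne\ell$ lives on~$A_i$, which is disjoint from~$A_\ell$ by~\eqref{non}.

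The second step invokes the classical mass-balance dichotomy for the Riesz balayage of a bounded nonnegative measure concentrated on~$A_\ell^c$ (see Landkof~\cite{L}; cf.~also~\cite{Z1}): one has $\beta^{\kappa_\alpha}_{A_\ell}\nu(\mathbb R^n)=\nu(\mathbb R^n)$ when $A_\ell$ is not $\alpha$-thin at~$\infty_{\mathbb R^n}$, while $\beta^{\kappa_\alpha}_{A_\ell}\nu(\mathbb R^n)<\nu(\mathbb R^n)$ when $A_\ell$ is $\alpha$-thin at~$\infty_{\mathbb R^n}$; the strictness uses $\nu\not\equiv 0$, which holds since $a_i>0$ for $i\ne\ell$. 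Substituting the value of $\nu(\mathbb R^n)$ yields $\mathrm\Sigma_\ell=\chi(\mathbb R^n)+|\boldsymbol a|-a_\ell$ in the non-thin case and $\mathrm\Sigma_\ell<\chi(\mathbb R^n)+|\boldsymbol a|-a_\ell$ in the thin case.

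Combining with the standing assumption~\eqref{init} gives the conclusion. If $A_\ell$ is not $\alpha$-thin, the solvability condition $a_\ell\leqslant\mathrm\Sigma_\ell$ reads $2a_\ell\leqslant|\boldsymbol a|+\chi(\mathbb R^n)$; together with~\eqref{init} this forces the equality~\eqref{init'}, and conversely \eqref{init'} is precisely $a_\ell=\mathrm\Sigma_\ell$, giving solvability. If $A_\ell$ is $\alpha$-thin, then $a_\ell\leqslant\mathrm\Sigma_\ell$ would yield $2a_\ell<|\boldsymbol a|+\chi(\mathbb R^n)$, contradicting~\eqref{init}; hence the problem has no solution. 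The principal obstacle I expect is securing the mass-balance dichotomy in the full Riesz range $0<\alpha\leqslant2$, $\alpha<n$: for $\alpha=2$ it is classical, while for $\alpha\in(0,2)$ one has to rely on the inversion-characterization of $\alpha$-thinness at~$\infty_{\mathbb R^n}$ recalled in the footnote to Example~\ref{ex:sharp}.
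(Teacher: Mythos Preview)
Your proof is correct and follows essentially the same approach as the paper's: both reduce the question via Theorem~\ref{th:solvable2} to comparing $a_\ell$ with $\mathrm\Sigma_\ell=\beta^{\kappa_\alpha}_{A_\ell}(\chi+R\boldsymbol\sigma)(\mathbb R^n)$, then invoke the mass-balance dichotomy for the Riesz balayage (equality if and only if $A_\ell$ is not $\alpha$-thin at~$\infty_{\mathbb R^n}$, which is \cite[Theorem~4]{Z1}), and finally combine with assumption~\eqref{init}. Your version is in fact slightly more explicit than the paper's in computing $\nu(\mathbb R^n)$, verifying that $\nu\geqslant0$ is supported in~$A_\ell^c$, and noting that $\nu\not\equiv0$ is needed for strictness in the thin case.
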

\begin{proof} It is known from the Riesz balayage theory (see, e.g.,
\cite{L}) that, for any bounded positive measure $\nu\in\mathcal
E_{\kappa_\alpha}(A_\ell^c)$, it holds
$\beta^{\kappa_\alpha}_{A_\ell}\nu(\mathbb R^n)\leqslant\nu(\mathbb
R^n)$, while according to \cite[Theorem~4]{Z1} the inequality here
is an equality if and only if $A_\ell$ is not $\alpha$-thin
at~$\infty_{\mathbb R^n}$. Combining this for
$\nu=\chi+R\boldsymbol\sigma$ with Theorem~\ref{th:solvable2}, where
$\mathrm\Sigma_\ell$ is now given by
\begin{equation*}\mathrm\Sigma_\ell=\beta^{\kappa_\alpha}_{A_\ell}(\chi+R\boldsymbol\sigma)(\mathbb R^n),\end{equation*}
and taking assumption~(\ref{init}) into account, we obtain the
proposition.
\end{proof}
\end{example}

\begin{example}\label{ex:13}Consider the Coulomb kernel
$\kappa_2(x,y)=|x-y|^{-1}$ in~$\mathbb R^3$ and let $A_\ell$ be a
rotational body consisting of all $x=(x_1,x_2,x_3)\in\mathbb R^3$
such that $q\leqslant x_1<\infty$ and $0\leqslant
x_2^2+x_3^2\leqslant\rho(x_1)$, where $q\in\mathbb R$ and
$\rho(x_1)$ approaches~$0$ as $x_1\to\infty$. We focus with the
following three cases:
\begin{align}\rho(r)&=r^{-s},\phantom{\exp-}\quad\text{where \
}s\in[0,\infty),\label{ex1}\\
\rho(r)&=\exp(-r^s),\quad\text{where \ }s\in(0,1],\label{ex2}\\
\rho(r)&=\exp(-r^s),\quad\text{where \ }s>1.\label{ex3}\end{align}
Then $A_\ell$ is not $2$-thin at~$\infty_{\mathbb R^3}$ in
case~(\ref{ex1}), has finite (Newtonian) capacity in
case~(\ref{ex3}), and it is $2$-thin at~$\infty_{\mathbb R^3}$
though $C_{\kappa_2}(A_\ell)=\infty$ in case~(\ref{ex2});
see~\cite{Z0}. Hence, if~$\boldsymbol a$ satisfies
condition~(\ref{init}), then, by Proposition~\ref{pr:ex:12}, the
Gauss variational problem is nonsolvable in case~(\ref{ex2}), while
in case~(\ref{ex1}) it admits a solution if and only if
(\ref{init'}) additionally holds. Also observe that, by
Theorem~\ref{th:suff}, in case~(\ref{ex3}) the problem is solvable
for {\it any\/}~$\boldsymbol a$.
\end{example}

\begin{example}\label{ex:14}Consider $\boldsymbol A=(A_1,A_2)$ with
$\alpha_1=+1$ and $\alpha_2=-1$, and let $\chi=0$. Then, in
consequence of Theorem~\ref{th:solvable2}, the set $\mathcal
S_{\kappa_\alpha}(\boldsymbol A,g,\chi)$ forms the cone in~$\mathbb
R^2$ defined by
\begin{equation}\label{aa}
a_2/a_1\leqslant\bigl(\beta^{\kappa_\alpha}_{A_2}\theta_{A_1}\bigr)\bigl(A_2\bigr),
\end{equation}
where $\theta_{A_1}$ is obtained from the equilibrium measure
of~$A_1$ relative to the kernel~$g^\alpha_{A_2^c}$ when divided by
$C_{g^\alpha_{A_2^c}}(A_1)$. Note that $\theta_{A_1}(A_1)=1$; hence,
by~\cite[Theorem~4]{Z1}, the value on the right in
relation~(\ref{aa}) is equal to~$1$ if $A_2$ is not $\alpha$-thin
at~$\infty_{\mathbb R^n}$, while otherwise it is~${}<1$.
\end{example}

\section{On the admissible measures in the auxiliary $(\boldsymbol A,\boldsymbol
a,g,J,\chi)$-problem}\label{sec:J}

For any $b\in(0,\infty)$ and $J$, $I^+\subseteq J\subseteq I$, write
$\mathcal E_b^+(\boldsymbol{A},\boldsymbol{a},g,J):=\mathcal
E^+(\boldsymbol{A},\boldsymbol{a},g,J)\cap\mathcal
E_{b}^+(\boldsymbol{A})$, where $\mathcal E_{b}^+(\boldsymbol{A})$
has been defined in Section~\ref{sec:cond}.

\begin{lemma}\label{lemma:H}
The value\/ $G_\chi(\boldsymbol{A},\boldsymbol{a},g,J)$ remains
unchanged if the class\/ $\mathcal
E^+(\boldsymbol{A},\boldsymbol{a},g,J)$ in its definition is
replaced by\/ $\mathcal E_H^+(\boldsymbol{A},\boldsymbol{a},g,J)$,
where
\begin{equation}\label{h}H:=h\Bigl[\chi^+(\mathrm X)+2|\boldsymbol a_J|g_{\inf}^{-1}\Bigr].\end{equation}
\end{lemma}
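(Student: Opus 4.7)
The inclusion $\mathcal E_H^+(\boldsymbol A,\boldsymbol a,g,J)\subseteq \mathcal E^+(\boldsymbol A,\boldsymbol a,g,J)$ gives the trivial inequality $G_\chi(\boldsymbol A,\boldsymbol a,g,J)\le \inf_{\mathcal E_H^+}G_\chi$. The plan is to establish the reverse by showing that every $\boldsymbol\mu\in\mathcal E^+(\boldsymbol A,\boldsymbol a,g,J)$ can be replaced, with $G_\chi$ not increasing, by some $\boldsymbol\mu'\in\mathcal E_H^+(\boldsymbol A,\boldsymbol a,g,J)$. The construction mimics the proof of Lemma~\ref{aux1}: keep $\boldsymbol\mu'_J:=\boldsymbol\mu_J$, and replace the $CJ$-block by an $R$-preimage of the orthogonal projection onto $\mathcal E^+_{A_{CJ}}$.

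Since $I^+\subseteq J$ we have $CJ\subseteq I^-$, so for any $\boldsymbol\eta\in\mathfrak M^+(\boldsymbol A_{CJ})$, $R\boldsymbol\eta=-\sum_{i\in CJ}\eta^i$. Put $\nu:=\chi+R\boldsymbol\mu_J\in\mathcal E$ and $\omega:=P_{A_{CJ}}\nu\in\mathcal E^+_{A_{CJ}}$, and choose any $\boldsymbol\omega=(\omega^i)_{i\in CJ}\in\mathfrak M^+(\boldsymbol A_{CJ})$ with $\sum_{i\in CJ}\omega^i=\omega$ by distributing the mass across the plates as in the proof of Corollary~\ref{cor:EA}. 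Define $\boldsymbol\mu'_J:=\boldsymbol\mu_J$, $\boldsymbol\mu'_{CJ}:=\boldsymbol\omega$. By Corollary~\ref{lemma3.3} (via the identity $R\boldsymbol\mu'=\nu-\omega-\chi$) we have $\boldsymbol\mu'\in\mathcal E^+(\boldsymbol A,\boldsymbol a,g,J)$, and representation~(\ref{gmu}) combined with the defining property of the orthogonal projection yields $G_\chi(\boldsymbol\mu')\le G_\chi(\boldsymbol\mu)$, exactly as in the argument for Lemma~\ref{aux1}.

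It remains to bound the total mass of $\boldsymbol\mu'$. From $g\ge g_{\inf}>0$ and $\langle g,\mu^i\rangle=a_i$ for every $i\in J$ we get $\sum_{i\in J}\mu^i(\mathrm X)\le |\boldsymbol a_J|g_{\inf}^{-1}$; since $I^+\subseteq J$, the Hahn--Jordan formula~(\ref{HJ}) gives $\nu^+\le\chi^++\sum_{i\in I^+}\mu^i$, so $\nu^+(\mathrm X)\le\chi^+(\mathrm X)+|\boldsymbol a_J|g_{\inf}^{-1}$. Applying Lemma~\ref{lemma:bound} to $\nu$ and $F=A_{CJ}$, I obtain $\omega(\mathrm X)\le h\nu^+(\mathrm X)\le h[\chi^+(\mathrm X)+|\boldsymbol a_J|g_{\inf}^{-1}]$. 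Adding the two bounds and using $h\ge1$ yields
\[
\sum_{i\in I}\mu'^i(\mathrm X)\le |\boldsymbol a_J|g_{\inf}^{-1}+h\bigl[\chi^+(\mathrm X)+|\boldsymbol a_J|g_{\inf}^{-1}\bigr]\le h\bigl[\chi^+(\mathrm X)+2|\boldsymbol a_J|g_{\inf}^{-1}\bigr]=H,
\]
so $\boldsymbol\mu'\in\mathcal E_H^+(\boldsymbol A,\boldsymbol a,g,J)$, completing the proof.

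The main technical obstacle is checking the hypotheses of Lemma~\ref{lemma:bound} for $\nu$ and $F=A_{CJ}$. Boundedness of $\nu^+$ follows from the boundedness of $\chi$ (standing assumption) together with the mass bound above, and $S(\nu^+)\cap A_{CJ}=\varnothing$ because $S(\nu^+)\subseteq S(\chi^+)\cup A^+$, $A_{CJ}\subseteq A^-$, and the two standing assumptions $S(\chi^+)\cap A^-=\varnothing$ and $A^+\cap A^-=\varnothing$ (from~(\ref{non})). For the required local boundedness of the kernel, given a compact $K\subset A_{CJ}$ I take a compact neighbourhood $V$ of $K$ still disjoint from the closed set $S(\nu^+)$ (possible by local compactness of $\mathrm X$), then use property~$(\infty_\mathrm X)$ to make $\kappa$ small on $V\times{K'}^c$ for some compact $K'$, and conclude by continuity of $\kappa$ off the diagonal on the two disjoint compacts $V$ and $K'\cap S(\nu^+)$.
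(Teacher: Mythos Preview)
Your proof is correct and follows essentially the same approach as the paper's: fix the $J$-block, replace the $CJ$-block by an $R$-preimage of $P_{A_{CJ}}(\chi+R\boldsymbol\mu_J)$, observe that $G_\chi$ does not increase (as in Lemma~\ref{aux1}), and then bound the total mass via Lemma~\ref{lemma:bound}. The paper handles the case $J=I$ separately (where the inclusion $\mathcal E^+(\boldsymbol A,\boldsymbol a,g,I)\subset\mathcal E_H^+(\boldsymbol A,\boldsymbol a,g,I)$ is immediate from the constraint bound), whereas your construction tacitly covers it by interpreting $P_\varnothing\nu=0$; you may wish to say one word about this. Your explicit verification of the hypotheses of Lemma~\ref{lemma:bound} (boundedness of $\nu^+$, disjointness of $S(\nu^+)$ from $A_{CJ}$, and the kernel bound via property~$(\infty_{\mathrm X})$ plus continuity off the diagonal) is more detailed than the paper, which simply invokes that lemma.
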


\begin{proof} Observe that $H$ is finite, which is clear from assumptions~(\ref{ag}), (\ref{ginf}) and the boundedness
of~$\chi$. Also note that, for any $\boldsymbol{\mu}=(\mu^i)_{i\in
I}\in\mathcal E^+(\boldsymbol{A},\boldsymbol{a},g,J)$,
\begin{equation}\label{upper}\mu^j(\mathrm X)\leqslant
a_jg^{-1}_{\inf}<\infty\quad\text{for all \ }j\in J.\end{equation}
Hence, if $J=I$, then $\mathcal
E^+(\boldsymbol{A},\boldsymbol{a},g,I)\subset\mathcal
E_H^+(\boldsymbol{A},\boldsymbol{a},g,I)$ and the lemma is obvious.

Therefore, assume $J\ne I$. Then, as is clear from the proof of
Lemma~\ref{aux1}, $G_\chi(\boldsymbol{A},\boldsymbol{a},g,J)$
remains unchanged if the class $\mathcal
E^+(\boldsymbol{A},\boldsymbol{a},g,J)$ in its definition is
replaced by its subclass consisting of all
$\boldsymbol\mu\in\mathcal E^+(\boldsymbol{A},\boldsymbol{a},g,J)$
with the additional property that $R{\boldsymbol\mu}_{CJ}=
P_{A_{CJ}}(\chi+R{\boldsymbol\mu}_{J})$. Thus, the lemma will follow
once we prove
\begin{equation}\label{est1}R\boldsymbol\mu_J(\mathrm X)+P_{A_{CJ}}(\chi+R\boldsymbol{\mu}_J)(\mathrm
X)\leqslant H,\end{equation} but this is a direct consequence of
relation~(\ref{upper}) and Lemma~\ref{lemma:bound}.
\end{proof}

\begin{corollary}\label{min:proj}If\/ $\tilde{\boldsymbol\lambda}$ is a minimizer in the\/
$(\boldsymbol{A},\boldsymbol{a},g,\chi,J)$-problem, then\/
$\tilde{\boldsymbol\lambda}\in\mathcal
E_H^+(\boldsymbol{A},\boldsymbol{a},g,J)$.
\end{corollary}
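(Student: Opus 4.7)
The plan is to reprise the mass-estimate argument already used in the proof of Lemma~\ref{lemma:H}, but now applied directly to the given minimizer $\tilde{\boldsymbol\lambda}$ rather than to a generic admissible measure modified by projection.

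The case $J=I$ is essentially tautological. Admissibility $\langle g,\tilde\lambda^i\rangle=a_i$ together with $g\geqslant g_{\inf}>0$ gives $\tilde\lambda^i(\mathrm X)\leqslant a_ig_{\inf}^{-1}$ for each $i$, so summing in $i$ and using $h\geqslant1$ yields $\sum_{i\in I}\tilde\lambda^i(\mathrm X)\leqslant|\boldsymbol a|g_{\inf}^{-1}\leqslant H$, which is exactly $\tilde{\boldsymbol\lambda}\in\mathcal E_H^+(\boldsymbol A,\boldsymbol a,g,I)$.

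All content therefore lies in the case $J\ne I$. I would split
\[\sum_{i\in I}\,\tilde\lambda^i(\mathrm X)=\sum_{j\in J}\,\tilde\lambda^j(\mathrm X)+\sum_{i\in CJ}\,\tilde\lambda^i(\mathrm X),\]
bound the first term again by $|\boldsymbol a_J|g_{\inf}^{-1}$ via admissibility, and attack the second using Lemma~\ref{aux1}: being a minimizer forces the $CJ$-components of $\tilde{\boldsymbol\lambda}$ to realize $\sum_{i\in CJ}\tilde\lambda^i=P_{A_{CJ}}(\chi+R\tilde{\boldsymbol\lambda}_J)$. I would then apply Lemma~\ref{lemma:bound} to $\nu:=\chi+R\tilde{\boldsymbol\lambda}_J$ and $F:=A_{CJ}$ to obtain $P_{A_{CJ}}\nu(\mathrm X)\leqslant h\nu^+(\mathrm X)\leqslant h[\chi^+(\mathrm X)+|\boldsymbol a_J|g_{\inf}^{-1}]$, where the last step exploits $\nu^+\leqslant\chi^++\sum_{i\in I^+}\tilde\lambda^i$ together with the admissibility bound on the $I^+$-components (recall $I^+\subseteq J$). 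Summing the two contributions reproduces the quantity $H$ defined in~(\ref{h}).

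The only nontrivial step is the verification that Lemma~\ref{lemma:bound} (and through it Lemma~\ref{lemma:proj.desc}) indeed applies to $\nu=\chi+R\tilde{\boldsymbol\lambda}_J$. Nonnegativity and off-diagonal continuity of $\kappa$, the generalized maximum principle and property~$(\infty_\mathrm X)$ are all standing assumptions; boundedness of $\nu^+$ follows from $\chi$ being bounded and the $J$-components having finite total mass; the disjointness $S(\nu^+)\cap A_{CJ}=\varnothing$ rests on the inclusion $CJ\subseteq I^-$ (forced by $I^+\subseteq J$), the standing hypothesis $S(\chi^+)\cap A^-=\varnothing$ and the condenser axiom~(\ref{non}); and the uniform kernel estimate $\sup_{x\in K,y\in S(\nu^+)}\kappa(x,y)<\infty$ on compact $K\subset A_{CJ}$ combines the boundedness~(\ref{kern.bound''}) on $A^+\times A^-$ with property~$(\infty_\mathrm X)$ and off-diagonal continuity to handle the $\chi^+$-piece, whose support need not be compact. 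This last verification is the only place where any real work is done.
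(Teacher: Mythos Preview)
Your proposal is correct and follows essentially the same route as the paper: the paper's proof merely cites relations~(\ref{relrel}) and~(\ref{est1}), the latter being precisely the mass estimate you rederive via Lemma~\ref{aux1} and Lemma~\ref{lemma:bound}. Your explicit verification of the hypotheses of Lemma~\ref{lemma:bound} for $\nu=\chi+R\tilde{\boldsymbol\lambda}_J$ is left implicit in the paper but is entirely sound.
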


\begin{proof}For $J\ne I$ this follows from
relations~(\ref{relrel}) and~(\ref{est1}), while otherwise it is
obvious.
\end{proof}

\section{Extremal measures}\label{sec:extr}

\begin{definition}\label{def:minnet}We call a net $(\boldsymbol\mu_s)_{s\in S}$ {\it minimizing\/} in the
$(\boldsymbol{A},\boldsymbol{a},g,\chi,J)$-problem if
\begin{equation*}\label{b}(\boldsymbol\mu_s)_{s\in S}\subset\mathcal
E_H^+(\boldsymbol{A},\boldsymbol{a},g,J),\end{equation*} $H$ being
defined by (\ref{h}), and furthermore
\begin{equation}\lim_{s\in
S}\,G_{\chi}(\boldsymbol\mu_s)=G_{\chi}(\boldsymbol{A},\boldsymbol{a},g,J).
\label{min}\end{equation}\end{definition}

Let $\mathbb M_\chi(\boldsymbol{A},\boldsymbol{a},g,J)$ consist of
all those nets; this class is nonempty in consequence of
assumption~(\ref{G:finite}) and Lemma~\ref{lemma:H}. We denote by
$\mathcal M_\chi(\boldsymbol{A},\boldsymbol{a},g,J)$ the union of
the vague cluster sets of $(\boldsymbol\mu_s)_{s\in S}$, where
$(\boldsymbol\mu_s)_{s\in S}$ ranges over $\mathbb
M_\chi(\boldsymbol{A},\boldsymbol{a},g,J)$.

\begin{remark} Taking a subnet if necessary, we shall always
assume $(\boldsymbol\mu_s)_{s\in S}\in\mathbb
M_\chi(\boldsymbol{A},\boldsymbol{a},g,J)$ to be strongly bounded.
Then so are $(R\boldsymbol\mu^+_s)_{s\in S}$,
$(R\boldsymbol\mu^-_s)_{s\in S}$ and $(\mu^i_s)_{s\in S}$ for all
$i\in I$; that is,
\begin{equation}\label{twostars}
\sup_{s\in S,\,i\in
I}\,\Bigl\{\|\mu_s^i\|,\,\|R\boldsymbol\mu^+_s\|,\,\|R\boldsymbol\mu^-_s\|\Bigr\}<\infty.
\end{equation}
Indeed, this can be obtained from $\kappa\geqslant0$ and
$\kappa(R\boldsymbol\mu^+_s,R\boldsymbol\mu^-_s)\leqslant M<\infty$
for all $s\in S$, the latter being a consequence of
$|R\boldsymbol\mu_s|(\mathrm X)\leqslant H$ and
assumption~(\ref{kern.bound''}).
\end{remark}

\begin{definition}\label{def:extr} We call
$\tilde{\boldsymbol\gamma}=(\tilde{\gamma}^i)_{i\in I}\in\mathcal
E^+(\boldsymbol{A})$ {\it extremal\/} in the
$(\boldsymbol{A},\boldsymbol{a},g,\chi,J)$-problem if there exists
$(\boldsymbol\mu_s)_{s\in S}\in\mathbb
M_\chi(\boldsymbol{A},\boldsymbol{a},g,J)$ that converges
to~$\tilde{\boldsymbol\gamma}$ both strongly and vaguely; such a net
$(\boldsymbol\mu_s)_{s\in S}$ is said to {\it
generate\/}~$\tilde{\boldsymbol\gamma}$. The class of all
those~$\tilde{\boldsymbol\gamma}$ will be denoted by $\mathfrak
E_\chi(\boldsymbol{A},\boldsymbol{a},g,J)$.\footnote{In the case
$J=I$, the tilde in the notation of an extremal measure will often
be dropped.}
\end{definition}

It follows from Lemma~\ref{lemma:lower} with $\psi=g$ that, for any
$\tilde{\boldsymbol\gamma}\in\mathfrak
E_\chi(\boldsymbol{A},\boldsymbol{a},g,J)$,
\begin{equation}\label{inequal}\langle
g,\tilde{\gamma}^j\rangle\leqslant a_j\quad\text{for all \ }j\in
J.\end{equation} Also observe that
\begin{equation}\label{incl}\mathfrak S_\chi(\boldsymbol{A},\boldsymbol{a},g,J)\subset
\mathfrak E_{\chi}(\boldsymbol{A},\boldsymbol{a},g,J),\end{equation}
since, because of Corollary~\ref{min:proj}, each
$\tilde{\boldsymbol\lambda}\in\mathfrak
S_\chi(\boldsymbol{A},\boldsymbol{a},g,J)$ (provided exists) can be
treated as an extremal measure generated by the constant sequence
$(\tilde{\boldsymbol\lambda})_{n\in\mathbb N}\in\mathbb
M_\chi(\boldsymbol{A},\boldsymbol{a},g,J)$.

\begin{lemma}\label{lemma:WM} Furthermore, the following assertions hold true:
\begin{itemize}
\item[\rm(i)] From every minimizing net one can select a subnet generating an extremal measure;
hence, $\mathfrak E_{\chi}(\boldsymbol{A},\boldsymbol{a},g,J)$ is
nonempty. Moreover,
\begin{equation}
\mathfrak E_{\chi}(\boldsymbol{A},\boldsymbol{a},g,J)=\mathcal
M_{\chi}(\boldsymbol{A},\boldsymbol{a},g,J).
\label{WM}\end{equation}
\item[\rm(ii)]  For any\/ $\tilde{\boldsymbol\gamma}_1,\tilde{\boldsymbol\gamma}_2\in\mathfrak
E_{\chi}(\boldsymbol{A},\boldsymbol{a},g,J)$, it holds that\/
$\|\tilde{\boldsymbol\gamma}_1-\tilde{\boldsymbol\gamma}_2\|_{\mathcal
E^+(\boldsymbol A)}=0$. Hence,
$R\tilde{\boldsymbol{\gamma}}_1=R\tilde{\boldsymbol{\gamma}}_2$,
while\/
$\tilde{\boldsymbol{\gamma}}_1=\tilde{\boldsymbol{\gamma}}_2$ if and
only if all\/ $A_i$, $i\in I$, are mutually disjoint.\smallskip
\item[\rm(iii)] $\mathfrak E_{\chi}(\boldsymbol{A},\boldsymbol{a},g,J)$ is vaguely compact.
\end{itemize}
\end{lemma}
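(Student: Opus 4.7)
The plan is driven by the parallelogram identity in the pre-Hilbert space $\mathcal E$, transferred to $\mathcal E^+(\boldsymbol A)$ via the isometry of Theorem~\ref{lemma:semimetric} together with the identity $G_\chi(\boldsymbol\mu)=\|\chi+R\boldsymbol\mu\|^2-\|\chi\|^2$ from~(\ref{gmu}). Note first that $\mathcal E^+(\boldsymbol A,\boldsymbol a,g,J)$ is convex, being a convex subset of the convex cone $\mathcal E^+(\boldsymbol A)$ (Corollary~\ref{lemma3.3}) on which the linear constraints $\langle g,\mu^i\rangle=a_i$ are stable under convex combinations.

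For assertion~(i) I would first show that every minimizing net $(\boldsymbol\mu_s)_{s\in S}\in\mathbb M_\chi(\boldsymbol{A},\boldsymbol{a},g,J)$ is strongly Cauchy. Indeed, for any $s,t\in S$ the midpoint $(\boldsymbol\mu_s+\boldsymbol\mu_t)/2$ again lies in $\mathcal E_H^+(\boldsymbol{A},\boldsymbol{a},g,J)$, so the parallelogram identity applied to $\chi+R\boldsymbol\mu_s,\,\chi+R\boldsymbol\mu_t\in\mathcal E$ yields, via~(\ref{twonorms}) and~(\ref{gmu}),
\[
\|\boldsymbol\mu_s-\boldsymbol\mu_t\|_{\mathcal E^+(\boldsymbol A)}^2
=2G_\chi(\boldsymbol\mu_s)+2G_\chi(\boldsymbol\mu_t)-4G_\chi\bigl((\boldsymbol\mu_s+\boldsymbol\mu_t)/2\bigr)
\leqslant 2G_\chi(\boldsymbol\mu_s)+2G_\chi(\boldsymbol\mu_t)-4G_\chi(\boldsymbol A,\boldsymbol a,g,J),
\]
which tends to $0$ by~(\ref{min}). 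Since $(\boldsymbol\mu_s)\subset\mathfrak M_H^+(\boldsymbol A)$ and the latter set is vaguely compact, a vague cluster point $\tilde{\boldsymbol\gamma}$ exists; Theorem~\ref{th:complete}(i) then places $\tilde{\boldsymbol\gamma}$ in $\mathcal E_H^+(\boldsymbol A)$ and forces the whole Cauchy net to converge to $\tilde{\boldsymbol\gamma}$ strongly. Extracting a subnet that also converges vaguely to $\tilde{\boldsymbol\gamma}$ exhibits $\tilde{\boldsymbol\gamma}\in\mathfrak E_\chi$, so $\mathfrak E_\chi\ne\varnothing$. The inclusion $\mathfrak E_\chi\subset\mathcal M_\chi$ is immediate from Definition~\ref{def:extr}, while the reverse inclusion follows by applying the same Cauchy-plus-completeness argument at any prescribed vague cluster point of a minimizing net, yielding~(\ref{WM}).

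For~(ii), the same inequality applied between two generating nets $(\boldsymbol\mu_s^{(1)})$ and $(\boldsymbol\mu_t^{(2)})$ of $\tilde{\boldsymbol\gamma}_1$ and $\tilde{\boldsymbol\gamma}_2$, whose componentwise averages again belong to $\mathcal E_H^+(\boldsymbol A,\boldsymbol a,g,J)$, gives $\|\boldsymbol\mu_s^{(1)}-\boldsymbol\mu_t^{(2)}\|_{\mathcal E^+(\boldsymbol A)}\to 0$. Passing to the strong limits by continuity of the semimetric yields $\|\tilde{\boldsymbol\gamma}_1-\tilde{\boldsymbol\gamma}_2\|_{\mathcal E^+(\boldsymbol A)}=0$, whence strict positive definiteness together with Theorem~\ref{lemma:semimetric} supplies the remaining assertions.

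For~(iii), since $\mathfrak E_\chi\subset\mathfrak M_H^+(\boldsymbol A)$ is vaguely relatively compact, it remains to prove vague closedness. Given $(\tilde{\boldsymbol\gamma}_\alpha)\subset\mathfrak E_\chi$ with vague limit $\boldsymbol\gamma_0$ and, for each $\alpha$, a minimizing net $(\boldsymbol\mu_s^\alpha)_s$ generating $\tilde{\boldsymbol\gamma}_\alpha$, I would run a diagonal procedure indexed by the product of the directed set of vague neighborhoods of $\boldsymbol\gamma_0$ with $\mathbb N$: at stage $(V,n)$, first pick $\alpha(V,n)$ with $\tilde{\boldsymbol\gamma}_{\alpha(V,n)}$ sufficiently close to $\boldsymbol\gamma_0$ vaguely, then $s(V,n)$ so that $\boldsymbol\mu_{s(V,n)}^{\alpha(V,n)}\in V$ and $|G_\chi(\boldsymbol\mu_{s(V,n)}^{\alpha(V,n)})-G_\chi(\boldsymbol A,\boldsymbol a,g,J)|<1/n$. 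The resulting net is minimizing with $\boldsymbol\gamma_0$ as a vague cluster point, and applying~(i) to it places $\boldsymbol\gamma_0$ in $\mathfrak E_\chi$. The main obstacle I anticipate is exactly this diagonal step: one must coordinate the choice of a single minimizing net across the whole family of generating nets so that it both approaches $\boldsymbol\gamma_0$ vaguely and realizes the infimum of $G_\chi$ in the limit. Everything else is driven by convexity of the admissible class, the parallelogram identity, and Theorem~\ref{th:complete}.
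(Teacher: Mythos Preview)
Your proposal is correct and follows essentially the same route as the paper: the parallelogram identity combined with convexity of $\mathcal E_H^+(\boldsymbol A,\boldsymbol a,g,J)$ gives the Cauchy property (the paper states it at once for two arbitrary minimizing nets, which then serves both~(i) and~(ii)), and Theorem~\ref{th:complete} with $b=H$ converts vague cluster points into strong limits. For~(iii) the paper packages your diagonal construction via Kelley's theorem on iterated limits over the directed product $T\times\prod_{t\in T}S_t$, which is exactly the abstract form of the hand-built net you describe; your version works for the same reason.
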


\begin{proof}Choose arbitrary $(\boldsymbol\mu_s)_{s\in S}$ and $(\boldsymbol\xi_t)_{t\in T}$ in $\mathbb
M_{\chi}(\boldsymbol{A},\boldsymbol{a},g,J)$. Then
\begin{equation}
\lim_{(s,\,t)\in S\times
T}\,\|\boldsymbol\mu_s-\boldsymbol\xi_t\|_{\mathcal E^+(\boldsymbol
A)}=0, \label{fund}
\end{equation}
where $S\times T$ is the directed product of the directed sets~$S$
and~$T$ (see, e.g.,~\cite[Chapter~2, Section~3]{K}). Since $\mathcal
E_H^+(\boldsymbol{A},\boldsymbol{a},g,J)$ is convex (cf.~the
footnote to Corollary~\ref{lemma3.3}), we get
\[4G_{\chi}(\boldsymbol{A},\boldsymbol{a},g,J)\leqslant4G_{\chi}\Biggl(\frac{\boldsymbol\mu_s+\boldsymbol\xi_t}{2}\Biggr)=
\|R\boldsymbol\mu_s+R\xi_t\|^2+4\kappa(\chi,R\boldsymbol\mu_s+R\boldsymbol\xi_t).\]
On the other hand, applying the parallelogram identity in the
pre-Hilbert space~$\mathcal E$ to $R\boldsymbol\mu_s$ and~$R\xi_t$
and then adding and subtracting
$4\kappa(\chi,R\boldsymbol\mu_s+R\boldsymbol\xi_t)$, we obtain
\[\|R\boldsymbol\mu_s-R\boldsymbol\xi_t\|^2= -\|R\boldsymbol\mu_s+R\boldsymbol\xi_t\|^2-4\kappa(\chi,R\boldsymbol\mu_s+R\boldsymbol\xi_t)+
2G_{\chi}(\boldsymbol\mu_s)+2G_{\chi}(\boldsymbol\xi_t).\] When
combined with the preceding relation, this gives
\[0\leqslant\|R\boldsymbol\mu_s-R\boldsymbol\xi_t\|^2\leqslant-4G_{\chi}(\boldsymbol{A},\boldsymbol{a},g,J)+
2G_{\chi}(\boldsymbol\mu_s)+2G_{\chi}(\boldsymbol\xi_t),\] which
yields (\ref{fund}) when combined with (\ref{min}).

Relation~(\ref{fund}) implies that $(\boldsymbol\mu_s)_{s\in S}$ is
strongly fundamental. By~Theorem~\ref{th:complete} with $b=H$, for
every vague cluster point~$\boldsymbol\mu$
of~$(\boldsymbol\mu_s)_{s\in S}$ (it exists) we therefore get
$\boldsymbol\mu_s\to\boldsymbol\mu$ strongly. This shows that
$\boldsymbol\mu$ is an extremal measure; actually, $\mathcal
M_{\chi}(\boldsymbol{A},\boldsymbol{a},g,J)\subset\mathfrak
E_{\chi}(\boldsymbol{A},\boldsymbol{a},g,J)$. Since the converse
inclusion is obvious, relation~(\ref{WM}) follows.

Having thus proved assertion~(i), we proceed by verifying (ii).
Choose $(\boldsymbol\mu_s)_{s\in S}$ and $(\boldsymbol\xi_t)_{t\in
T}$ generating $\tilde{\boldsymbol\gamma}_1$ and
$\tilde{\boldsymbol\gamma}_2$, respectively. Then application
of~(\ref{fund}) shows that $(\boldsymbol\mu_s)_{s\in S}$ converges
to both~$\tilde{\boldsymbol\gamma}_1$
and~$\tilde{\boldsymbol\gamma}_2$ strongly, hence
$\|\tilde{\boldsymbol\gamma}_1-\tilde{\boldsymbol\gamma}_2\|_{\mathcal
E^+(\boldsymbol A)}=0$, and consequently
$\|R\tilde{\boldsymbol\gamma}_1-R\tilde{\boldsymbol\gamma}_2\|=0$
by~(\ref{twonorms}). In view of the strict positive definiteness of
the kernel, assertion~(ii) follows.

To establish assertion~(iii), fix
$(\tilde{\boldsymbol\gamma}_s)_{s\in S}\subset\mathfrak
E_{\chi}(\boldsymbol{A},\boldsymbol{a},g,J)$; then
$(\tilde{\boldsymbol\gamma}_s)_{s\in S}\subset\mathcal
E^+_H(\boldsymbol A)$. Since the class $\mathfrak M^+_H(\boldsymbol
A)$ is vaguely compact, there exists a vague cluster
point~$\tilde{\boldsymbol\gamma}_0$ of
$(\tilde{\boldsymbol\gamma}_s)_{s\in S}$. Let
$(\tilde{\boldsymbol\gamma}_t)_{t\in T}$ be a subnet
of~$(\tilde{\boldsymbol\gamma}_s)_{s\in S}$ that converges vaguely
to~$\tilde{\boldsymbol\gamma}_0$; then for every $t\in T$ one can
choose a net $(\boldsymbol\mu_{s_t})_{s_t\in S_t}\in\mathbb
M_{\chi}(\boldsymbol{A},\boldsymbol{a},g,J)$ converging vaguely
to~$\tilde{\boldsymbol\gamma}_t$. Consider the Cartesian product
$\prod_{t\in T}S_t$~--- that is, the collection of all
functions~$\beta$ on~$T$ with $\beta(t)\in S_t$, and let~$D$ denote
the directed product $T\times\prod_{t\in T}S_t$. For every
$(t,\beta)\in D$, write
$\boldsymbol\mu_{(t,\beta)}:=\boldsymbol\mu_{\beta(t)}$. Then the
theorem on iterated limits from \cite[Chapter~2, Section~4]{K}
implies that the net $\boldsymbol\mu_{(t,\beta)}$, $(t,\beta)\in D$,
belongs to $\mathbb M_{\chi}(\boldsymbol{A},\boldsymbol{a},g,J)$ and
converges vaguely to~$\tilde{\boldsymbol\gamma}_0$; thus,
$\tilde{\boldsymbol\gamma}_0\in\mathcal
M_{\chi}(\boldsymbol{A},\boldsymbol{a},g,J)$. Combined
with~(\ref{WM}), this yields
assertion~(iii).
\end{proof}

\begin{corollary}\label{cor:uniqu}Any two minimizers\/
$\tilde{{\boldsymbol\lambda}}_1,\tilde{{\boldsymbol\lambda}}_2\in\mathfrak
S_{\chi}(\boldsymbol{A},\boldsymbol{a},g,J)$ {\rm(}provided
exist\/{\rm)} are equivalent in\/~$\mathcal E^+(\boldsymbol A)$.
Hence,
$R\tilde{\boldsymbol{\lambda}}_1=R\tilde{\boldsymbol{\lambda}}_2$,
while\/
$\tilde{\boldsymbol{\lambda}}_1=\tilde{\boldsymbol{\lambda}}_2$ if
and only if all\/ $A_i$, $i\in I$, are mutually
disjoint.\end{corollary}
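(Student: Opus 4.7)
The plan is to deduce this corollary directly from Lemma~\ref{lemma:WM}(ii), exploiting the already established inclusion~(\ref{incl}). Specifically, since Corollary~\ref{min:proj} shows that any minimizer lies in $\mathcal{E}_H^+(\boldsymbol{A},\boldsymbol{a},g,J)$, the constant net $(\tilde{\boldsymbol\lambda}_k)_{n\in\mathbb N}$, for $k=1,2$, belongs to $\mathbb{M}_\chi(\boldsymbol{A},\boldsymbol{a},g,J)$ and converges trivially both vaguely and strongly to $\tilde{\boldsymbol\lambda}_k$. Hence $\tilde{\boldsymbol\lambda}_1, \tilde{\boldsymbol\lambda}_2 \in \mathfrak{E}_\chi(\boldsymbol{A},\boldsymbol{a},g,J)$, and Lemma~\ref{lemma:WM}(ii) applies verbatim to give $\|\tilde{\boldsymbol\lambda}_1 - \tilde{\boldsymbol\lambda}_2\|_{\mathcal E^+(\boldsymbol A)} = 0$, i.e.\ equivalence in $\mathcal{E}^+(\boldsymbol{A})$.

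From this equivalence, the identity $R\tilde{\boldsymbol\lambda}_1 = R\tilde{\boldsymbol\lambda}_2$ follows by invoking the isometry~(\ref{twonorms}) of Theorem~\ref{lemma:semimetric}: the vanishing of the semimetric on the left forces $\|R\tilde{\boldsymbol\lambda}_1 - R\tilde{\boldsymbol\lambda}_2\|_{\mathcal E} = 0$, and then the standing assumption that $\kappa$ is strictly positive definite (Section~\ref{sec:standing}) upgrades this seminorm zero to actual equality of the scalar measures.

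Finally, for the last clause, one reads off from the second part of Theorem~\ref{lemma:semimetric} that the semimetric $\|\cdot\|_{\mathcal E^+(\boldsymbol A)}$ is a genuine metric precisely when $\kappa$ is strictly positive definite and the plates $A_i$, $i\in I$, are mutually disjoint. Under the standing assumptions the first condition holds, so vanishing of the semimetric is equivalent to $\tilde{\boldsymbol\lambda}_1 = \tilde{\boldsymbol\lambda}_2$ if and only if the disjointness condition also holds; otherwise, one can redistribute mass between overlapping equally signed plates to produce genuinely distinct but equivalent minimizers.

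There is really no main obstacle here: the corollary is a packaging of Lemma~\ref{lemma:WM}(ii) and Theorem~\ref{lemma:semimetric} through the inclusion~(\ref{incl}), and the only thing to be careful about is noting that the constant net built from a minimizer is automatically minimizing (by definition of $\mathfrak{S}_\chi$) and strongly bounded (by Corollary~\ref{min:proj}), so that it legitimately witnesses membership in $\mathfrak{E}_\chi(\boldsymbol{A},\boldsymbol{a},g,J)$.
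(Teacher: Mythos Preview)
Your proof is correct and follows essentially the same approach as the paper: the paper's proof is the single sentence ``This is a direct consequence of inclusion~(\ref{incl}) and assertion~(ii) of Lemma~\ref{lemma:WM},'' and you have simply unpacked that sentence, supplying the justification for~(\ref{incl}) via Corollary~\ref{min:proj} and the constant net exactly as the paper does in the text preceding~(\ref{incl}). Your additional remarks invoking Theorem~\ref{lemma:semimetric} for the $R$-image and the metric/semimetric dichotomy are accurate elaborations of what is already contained in the statement of Lemma~\ref{lemma:WM}(ii).
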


\begin{proof}This is a direct consequence of inclusion~(\ref{incl})
and assertion~(ii) of Lemma~\ref{lemma:WM}.
\end{proof}

\begin{corollary}\label{IorII} For every extremal measure\/ ${\tilde{\boldsymbol\gamma}}\in\mathfrak
E_{\chi}(\boldsymbol{A},\boldsymbol{a},g,J)$ it holds that
\begin{equation}\label{eqIorII}
G_{\chi}({\tilde{\boldsymbol\gamma}})=G_{\chi}(\boldsymbol{A},\boldsymbol{a},g,J).
\end{equation}
\end{corollary}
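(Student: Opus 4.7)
The plan is to use the pre-Hilbert-space representation of the weighted energy, namely formula~(\ref{gmu}), together with the strong convergence of any generating net. Fix $\tilde{\boldsymbol\gamma}\in\mathfrak E_\chi(\boldsymbol A,\boldsymbol a,g,J)$ and let $(\boldsymbol\mu_s)_{s\in S}\in\mathbb M_\chi(\boldsymbol A,\boldsymbol a,g,J)$ be a net generating~$\tilde{\boldsymbol\gamma}$, so that $\boldsymbol\mu_s\to\tilde{\boldsymbol\gamma}$ both vaguely and strongly and $\lim_s G_\chi(\boldsymbol\mu_s)=G_\chi(\boldsymbol A,\boldsymbol a,g,J)$.

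First, by Theorem~\ref{lemma:semimetric} the map $R$ is an isometry of $\mathcal E^+(\boldsymbol A)$ into~$\mathcal E$, so the strong convergence $\boldsymbol\mu_s\to\tilde{\boldsymbol\gamma}$ translates via~(\ref{twonorms}) into strong convergence $R\boldsymbol\mu_s\to R\tilde{\boldsymbol\gamma}$ in~$\mathcal E$. In particular $\|R\boldsymbol\mu_s\|\to\|R\tilde{\boldsymbol\gamma}\|$, and since the strong topology on~$\mathcal E$ is finer than the weak one (as noted in Section~\ref{sec:2}), the Cauchy--Schwarz inequality gives $\kappa(\chi,R\boldsymbol\mu_s)\to\kappa(\chi,R\tilde{\boldsymbol\gamma})$.

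Next I would apply representation~(\ref{gmu}): for every $s$,
\[G_\chi(\boldsymbol\mu_s)=\|R\boldsymbol\mu_s\|^2+2\kappa(\chi,R\boldsymbol\mu_s),\]
and the same identity for $\tilde{\boldsymbol\gamma}\in\mathcal E^+(\boldsymbol A)$. Passing to the limit in~$s$, the two convergences just established yield
\[\lim_{s\in S}\,G_\chi(\boldsymbol\mu_s)=\|R\tilde{\boldsymbol\gamma}\|^2+2\kappa(\chi,R\tilde{\boldsymbol\gamma})=G_\chi(\tilde{\boldsymbol\gamma}).\]
Combining this with the minimizing property~(\ref{min}) of the generating net gives the required equality~(\ref{eqIorII}).

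There is essentially no obstacle here: the whole point of the definition of an extremal measure is to be the strong (and vague) limit of a minimizing net, and the pre-Hilbert formula~(\ref{gmu}) turns $G_\chi$ into a manifestly strongly continuous functional of~$R\boldsymbol\mu$. The only subtlety worth flagging is that $\tilde{\boldsymbol\gamma}$ need not lie in $\mathcal E^+(\boldsymbol A,\boldsymbol a,g,J)$ (only the one-sided bound~(\ref{inequal}) is guaranteed), so the corollary asserts a numerical identity between $G_\chi(\tilde{\boldsymbol\gamma})$ and the infimum, not that $\tilde{\boldsymbol\gamma}$ is itself a minimizer; this distinction, however, is irrelevant to the argument above.
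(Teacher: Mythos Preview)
Your argument is correct and essentially identical to the paper's: both apply representation~(\ref{gmu}) to $\tilde{\boldsymbol\gamma}$ and to each $\boldsymbol\mu_s$, use the isometry~(\ref{twonorms}) to convert strong convergence in $\mathcal E^+(\boldsymbol A)$ into strong convergence of the $R$-images in~$\mathcal E$, and then pass to the limit and invoke~(\ref{min}). The only cosmetic difference is that the paper writes $G_\chi(\boldsymbol\mu)=\|\chi+R\boldsymbol\mu\|^2-\|\chi\|^2$ (the second form in~(\ref{gmu})), which makes the continuity of $G_\chi$ under strong convergence immediate in one stroke, whereas you split it into the two terms $\|R\boldsymbol\mu\|^2$ and $2\kappa(\chi,R\boldsymbol\mu)$.
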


\begin{proof}  Applying (\ref{gmu}) to $\tilde{\boldsymbol{\gamma}}\in\mathfrak
E_{\chi}(\boldsymbol{A},\boldsymbol{a},g,J)$ and each
of~$\boldsymbol{\mu}_s$, $s\in S$, where $(\boldsymbol{\mu}_s)_{s\in
S}$ is a net generating~$\tilde{\boldsymbol{\gamma}}$, and using the
fact that $\boldsymbol{\mu}_s\to\tilde{\boldsymbol{\gamma}}$
strongly, we get
\[G_{\chi}(\tilde{\boldsymbol\gamma})=\|\chi+R\tilde{\boldsymbol\gamma}\|^2-\|\chi\|^2=\lim_{s\in S}\,
\Bigl[\|\chi+R\boldsymbol\mu_s\|^2-\|\chi\|^2\Bigr]=\lim_{s\in
S}\,G_{\chi}(\boldsymbol\mu_s),\] which yields~(\ref{eqIorII}) when
combined with~(\ref{min}).
\end{proof}

\begin{corollary}\label{extr.min}Let\/ $J=I$. If the\/ $(\boldsymbol{A},\boldsymbol{a},g,\chi)$-problem is
solvable, then the class\/ $\mathfrak
S_\chi(\boldsymbol{A},\boldsymbol{a},g)$ of all its solutions is
compact in the vague topology; moreover,
\begin{equation}\label{hrryu}\mathfrak
S_\chi(\boldsymbol{A},\boldsymbol{a},g)=\mathfrak
E_\chi(\boldsymbol{A},\boldsymbol{a},g).\end{equation}
\end{corollary}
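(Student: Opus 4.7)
The plan is to leverage the known inclusion $\mathfrak S_\chi\subset\mathfrak E_\chi$ from~(\ref{incl}) together with the vague compactness of $\mathfrak E_\chi$ supplied by Lemma~\ref{lemma:WM}(iii); once the reverse inclusion is established, the compactness claim for $\mathfrak S_\chi$ follows immediately.

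For the reverse inclusion, I would fix an arbitrary $\tilde{\boldsymbol\gamma}\in\mathfrak E_\chi(\boldsymbol A,\boldsymbol a,g)$ together with a solution $\boldsymbol\lambda\in\mathfrak S_\chi(\boldsymbol A,\boldsymbol a,g)$, which exists by hypothesis. Since $\boldsymbol\lambda$ is itself an extremal measure by~(\ref{incl}), Lemma~\ref{lemma:WM}(ii) gives $R\tilde{\boldsymbol\gamma}=R\boldsymbol\lambda$, and hence, by the Hahn--Jordan formulas~(\ref{HJ}), the identities of positive Radon measures
\[\sum_{i\in I^+}\,\tilde{\gamma}^i=\sum_{i\in I^+}\,\lambda^i\quad\text{and}\quad\sum_{i\in I^-}\,\tilde{\gamma}^i=\sum_{i\in I^-}\,\lambda^i.\]

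Next, integrating the first of these equalities against the positive continuous function~$g$ (via local finiteness of $\boldsymbol A$ and monotone convergence on the exhaustion by finite subsums), I would obtain
\[\sum_{i\in I^+}\,\langle g,\tilde{\gamma}^i\rangle=\sum_{i\in I^+}\,\langle g,\lambda^i\rangle=\sum_{i\in I^+}\,a_i<\infty.\]
Combined with the termwise bounds $\langle g,\tilde{\gamma}^i\rangle\leqslant a_i$ from~(\ref{inequal}) and the finiteness of the series, this forces equality $\langle g,\tilde{\gamma}^i\rangle=a_i$ for every $i\in I^+$. An identical argument handles $i\in I^-$, so $\tilde{\boldsymbol\gamma}\in\mathcal E^+(\boldsymbol A,\boldsymbol a,g)$; since $G_\chi(\tilde{\boldsymbol\gamma})=G_\chi(\boldsymbol A,\boldsymbol a,g)$ is already supplied by Corollary~\ref{IorII}, we conclude $\tilde{\boldsymbol\gamma}\in\mathfrak S_\chi(\boldsymbol A,\boldsymbol a,g)$, proving~(\ref{hrryu}).

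The only delicate point would be the passage from the equality of the Hahn--Jordan sums to termwise equality of the $g$-masses: because equally signed plates may intersect or coincide, one cannot hope for $\tilde{\gamma}^i=\lambda^i$ individually, so this detour through $g$-integration and the combination with~(\ref{inequal}) is indispensable.
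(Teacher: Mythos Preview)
Your proof is correct and follows essentially the same route as the paper: both arguments deduce $R\tilde{\boldsymbol\gamma}=R\boldsymbol\lambda$ from Lemma~\ref{lemma:WM}(ii), integrate~$g$ against the resulting identity of positive measures to obtain $\sum_i\langle g,\tilde{\gamma}^i\rangle=\sum_i\langle g,\lambda^i\rangle=|\boldsymbol a|$, and then combine with~(\ref{inequal}) to force termwise equality. The only cosmetic difference is that the paper works with $|R\boldsymbol\gamma|=|R\boldsymbol\lambda|$ and invokes Lemma~\ref{lemma3.5} to handle the sum over all of~$I$ at once, whereas you split into the $I^+$ and $I^-$ parts separately via~(\ref{HJ}).
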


\begin{proof}Due to
assertion~(iii) of Lemma~\ref{lemma:WM}, the corollary will follow
once we prove~(\ref{hrryu}). As is seen from relations~(\ref{incl})
and~(\ref{eqIorII}), to this end it is enough to show that, for any
given $\boldsymbol\gamma\in\mathfrak
E_\chi(\boldsymbol{A},\boldsymbol{a},g)$,
\begin{equation}\label{ex.min}\langle g,\gamma^i\rangle=a_i\quad\text{for all \
}i\in I.\end{equation} Because of assertion~(ii) of
Lemma~\ref{lemma:WM}, for any $\boldsymbol\lambda\in\mathfrak
S_\chi(\boldsymbol{A},\boldsymbol{a},g)$ one has
$R\boldsymbol\gamma=R\boldsymbol\lambda$, so
$|R\boldsymbol\gamma|=|R\boldsymbol\lambda|$, hence $\langle
g,|R\boldsymbol\gamma|\rangle=\langle
g,|R\boldsymbol\lambda|\rangle$ and, as an application of
Lemma~\ref{lemma3.5} with $\psi=g$,
\[\sum_{i\in I}\,\langle g,\gamma^i\rangle=\sum_{i\in I}\,\langle
g,\lambda^i\rangle=|\boldsymbol a|,\] $|\boldsymbol a|$ being finite
by assumption~(\ref{ag}). In view of relation~(\ref{inequal}), this
yields~(\ref{ex.min}).
\end{proof}

\section{Proof of Theorem~\ref{lemma:exist}}\label{proof:lemma:exist}

Given $J$, where $I^+\subseteq J\subseteq I$, fix an extremal
measure $\tilde{\boldsymbol\gamma}\in\mathfrak
E_{\chi}(\boldsymbol{A},\boldsymbol{a},g,J)$; it exists according to
assertion~(i) of Lemma~\ref{lemma:WM}. A part of the proof of
Theorem~\ref{lemma:exist} is given as the following lemma, to be
needed also in the sequel.

\begin{lemma}\label{lemma:lemma}If\/ $C(A_j)<\infty$ for some\/ $j\in J$, then
\begin{equation}
\langle g,\tilde{\gamma}^j\rangle=a_j.\label{24}
\end{equation}
\end{lemma}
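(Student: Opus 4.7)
My plan is to argue by contradiction via a perturbation of a minimizing net. First, Lemma~\ref{lemma:lower} applied to $g$ and to any minimizing net $(\boldsymbol\mu_s)_{s\in S}$ generating $\tilde{\boldsymbol\gamma}$ (which exists by assertion~(i) of Lemma~\ref{lemma:WM}) will give $\langle g,\tilde\gamma^j\rangle \leqslant \liminf_s\langle g,\mu_s^j\rangle = a_j$, recovering (\ref{inequal}). I will then assume toward contradiction that $\delta := a_j - \langle g,\tilde\gamma^j\rangle > 0$ and aim to produce an admissible perturbation whose energy undercuts $G_\chi(\boldsymbol A,\boldsymbol a,g,J)$.

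To build a compensating measure, I will invoke $C(A_j)<\infty$ and Theorem~\ref{thF:1} to obtain an interior equilibrium measure $\theta \in \mathcal E^+_{A_j}$, bounded with $\theta(\mathrm X) = C(A_j)$. The growth hypothesis on $g|_{A_j}$ (either boundedness or~(\ref{growth})), together with Lemma~\ref{a.e.}, Cauchy--Schwarz, and H\"older's inequality, will ensure $0<\langle g,\theta\rangle<\infty$; I can then normalize by setting $\eta := (\delta/\langle g,\theta\rangle)\theta\in\mathcal E^+_{A_j}$, so that $\langle g,\eta\rangle=\delta$.

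Next, for each $s$ and $t\in[0,1]$, I will form the perturbation $\boldsymbol\mu^*_s(t)$ defined by $\mu^{*,i}_s(t):=\mu_s^i$ for $i\ne j$ and $\mu^{*,j}_s(t):=(1-t\delta/a_j)\mu_s^j+t\eta$, which lies in $\mathcal E^+(\boldsymbol A,\boldsymbol a,g,J)$ by construction. Using~(\ref{gmu}) and expanding the $R$-image yields
\[
G_\chi(\boldsymbol\mu^*_s(t)) - G_\chi(\boldsymbol\mu_s) = 2t\alpha_j\kappa\bigl(\chi+R\boldsymbol\mu_s,\,\eta-(\delta/a_j)\mu_s^j\bigr) + t^2\bigl\|\eta - (\delta/a_j)\mu_s^j\bigr\|^2.
\]
I will then pass to the limit along a subnet on which $\|\mu_s^j\|^2$ converges, using the strong convergence $R\boldsymbol\mu_s\to R\tilde{\boldsymbol\gamma}$ from Theorem~\ref{th:complete} together with the weak convergence $\mu_s^j\to\tilde\gamma^j$ supplied by property~(C$_2$) of Definition~\ref{def:const}, which applies thanks to the strong boundedness~(\ref{twostars}). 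Combining with $G_\chi(\boldsymbol\mu_s)\to G_\chi(\boldsymbol A,\boldsymbol a,g,J)$ (Corollary~\ref{IorII}) and the admissibility bound $G_\chi(\boldsymbol\mu^*_s(t))\geqslant G_\chi(\boldsymbol A,\boldsymbol a,g,J)$, dividing by $t$ and letting $t\to 0^+$ produces the variational inequality
\[
\alpha_j\kappa(\chi+R\tilde{\boldsymbol\gamma},\eta) \geqslant (\delta/a_j)\,\alpha_j\kappa(\chi+R\tilde{\boldsymbol\gamma},\tilde\gamma^j)
\]
valid for every $\eta\in\mathcal E^+_{A_j}$ with $\langle g,\eta\rangle=\delta$, i.e., an extremal characterization of the weighted vector potential $W^j_{\tilde{\boldsymbol\gamma}}=\alpha_j\kappa(\cdot,\chi+R\tilde{\boldsymbol\gamma})$ on~$A_j$.

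The hard part will be converting this variational inequality into a strict contradiction with $\delta>0$. I plan to exploit the equilibrium relations $\kappa(\cdot,\theta)\geqslant 1$ n.e.~on~$A_j$ and $\kappa(\cdot,\theta)\leqslant 1$ on~$S(\theta)$ from Theorem~\ref{thF:1}, in conjunction with the generalized maximum principle (Definition~\ref{def:h}) and the property $(\infty_\mathrm X)$ from Definition~\ref{inf}, by testing against $\eta$ obtained from $\theta$ suitably restricted to subsets of $A_j$ where $W^j_{\tilde{\boldsymbol\gamma}}$ can be made small; this should force a pointwise sign violation incompatible with the constancy of the right-hand side, delivering $\delta=0$.
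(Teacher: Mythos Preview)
Your perturbation argument up through the variational inequality is sound: you correctly obtain
\[
a_j W^j_{\tilde{\boldsymbol\gamma}}(x)\geqslant\langle W^j_{\tilde{\boldsymbol\gamma}},\tilde\gamma^j\rangle\,g(x)\quad\text{n.e.~in }A_j.
\]
The gap is in the ``hard part''. This inequality is precisely relation~(\ref{descpot1}), which the paper proves (by essentially the same perturbation, see Theorem~\ref{th:descpot1}) for \emph{every} extremal measure, irrespective of whether $\langle g,\tilde\gamma^j\rangle$ equals~$a_j$ or not. Consequently there is no admissible test measure~$\eta$ that violates it, and your proposed contradiction cannot be obtained. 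Your sketch for producing such an~$\eta$---pushing mass to regions of~$A_j$ where $W^j_{\tilde{\boldsymbol\gamma}}$ is small via~$(\infty_{\mathrm X})$---would at best show that the constant $\langle W^j_{\tilde{\boldsymbol\gamma}},\tilde\gamma^j\rangle$ is~$\leqslant0$ in certain geometries, but that is perfectly compatible with the inequality and with $\delta>0$; indeed nothing in your argument has yet used the hypothesis $C(A_j)<\infty$ in an essential way beyond constructing the single normalizing measure~$\theta$.

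The paper's proof proceeds along an entirely different line and uses $C(A_j)<\infty$ crucially. Rather than a variational inequality, it shows directly that no $g$-mass of a minimizing net can escape to infinity along~$A_j$: one proves $\langle g\chi_{K_j^c},\mu_s^j\rangle\to0$ uniformly in~$s$ as $K_j\uparrow A_j$. The mechanism is that finite capacity forces $\|\theta_{A_j\setminus K_j}\|\to0$ (relation~(\ref{27}), obtained via consistency of the kernel), and then the growth condition~(\ref{growth}) combined with the equilibrium property~(\ref{equ:F2}) and H\"older/Cauchy--Schwarz bounds the escaping mass by $\|\theta_{A_j\setminus K_j}\|^{1/q_j}$ times uniformly bounded quantities. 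This tightness argument is what your proposal is missing; the first-order optimality condition alone does not detect loss of mass.
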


\begin{proof} We treat $A_j$ as a locally
compact space with the topology induced from~$\mathrm X$. Given a
set $E\subset A_j$, let $\chi_E$ denote its characteristic function
and $\theta_E\in\mathcal E^+(\overline{E})$ the interior equilibrium
measure associated with~$E$ (as the kernel is perfect, the existence
of~$\theta_E$ is guaranteed by Theorem~\ref{thF:1}). Also write
$E^c:=E^{c_{A_j}}:=A_j\setminus E$, and let $\{K_j\}_{A_j}$ be the
increasing filtering family of all compact subsets~$K_j$ of~$A_j$.

We then observe that there is no loss of generality in assuming
$g|_{A_j}$ to satisfy condition~(\ref{growth}) with some
$r_j\in(1,\infty)$ and $\nu_j\in\mathcal E$. Indeed, otherwise
$g|_{A_j}$ has to be bounded from above (say by~$M$), which combined
with relation~(\ref{equ:F2}) for $E=A_j$ results in
inequality~(\ref{growth}) with $\nu_j:=M^{r_j}\theta_{A_j}$,
$r_j\in(1,\infty)$ being arbitrary.

To establish (\ref{24}),  choose a (strongly bounded) net
$(\boldsymbol\mu_s)_{s\in S}\in\mathbb
M_{\chi}(\boldsymbol{A},\boldsymbol{a},g,J)$
generating~$\tilde{\boldsymbol\gamma}$. Since $g\chi_{K_j}$ is upper
semicontinuous on~$A_j$ while $\mu_s^j\to\tilde{\gamma}^j$ vaguely,
we get
\[
\langle g\chi_{K_j},\tilde{\gamma}^j\rangle\geqslant\limsup_{s\in
S}\,\langle g\chi_{K_j},\mu_s^j\rangle\quad\mbox{for every \ }
K_j\in\{K_j\}_{A_j}.\] On the other hand, application of Lemma~1.2.2
from~\cite{F1} yields
\[
\langle g,\tilde{\gamma}^j\rangle=\lim_{K_j\uparrow A_j}\,\langle
g\chi_{K_j},\tilde{\gamma}^j\rangle,\] which together with the
preceding inequality and relation~(\ref{inequal}) gives
\[
a_j\geqslant\langle
g,\tilde{\gamma}^j\rangle\geqslant\limsup_{(s,\,K_j)\in
S\times\{K_j\}_{A_j}}\,\langle g\chi_{K_j},\mu_s^j\rangle=
a_j-\liminf_{(s,\,K_j)\in S\times\{K_j\}_{A_j}}\,\langle
g\chi_{K^c_j},\mu_s^j\rangle,\] $S\times\{K_j\}_{A_j}$ being the
directed product of the directed sets~$S$ and~$\{K_j\}_{A_j}$.
Hence, if we prove
\begin{equation}
\liminf_{(s,\,K_j)\in S\times\{K_j\}_{A_j}}\,\langle
g\chi_{K^c_j},\mu_s^j\rangle=0, \label{25}
\end{equation}
the lemma follows.

For any $K_j,\widehat{K}_j\in\{K_j\}_{A_j}$ such that
$K_j\subset\widehat{K}_j$, consider $\theta_{K^c_j}$
and~$\theta_{\widehat{K}^c_j}$, the interior equilibrium measures
associated with~$K^c_j$ and~$\widehat{K}^c_j$, respectively. Then
$\theta_{\widehat{K}^c_j}$ minimizes the energy in the
class~$\Gamma_{\widehat{K}^c_j}$, while $\theta_{K^c_j}$ belongs to
$\Gamma_{\widehat{K}^c_j}$; see Theorem~\ref{thF:1}. By
Lemma~\ref{lemma:convex} with $\mathcal H=\Gamma_{\widehat{K}^c_j}$
and $\nu=\theta_{K^c_j}$, this yields
\[
\|\theta_{K^c_j}-\theta_{\widehat{K}^c_j}\|^2\leqslant
\|\theta_{K^c_j}\|^2-\|\theta_{\widehat{K}^c_j}\|^2.\] But, as is
seen from~(\ref{equF:1}), the net $\|\theta_{K^c_j}\|$,
$K_j\in\{K_j\}_{A_j}$, is bounded and nonincreasing, and so it is
fundamental in~$\mathbb R$. The preceding inequality then yields
that the net $(\theta_{K^c_j})_{K_j\in\{K_j\}_{A_j}}$ is strongly
fundamental in~$\mathcal E^+$. Since, clearly, it converges vaguely
to zero, the property~(C$_1$) (see~Section~\ref{sec:2}) implies that
zero is also its strong limit; hence,
\begin{equation}
\lim_{K_j\uparrow A_j}\,\|\theta_{K^c_j}\|=0. \label{27}
\end{equation}

Write $q_j:=r_j(r_j-1)^{-1}$, where $r_j\in(1,\infty)$ is the number
involved in condition~(\ref{growth}). Combining
relations~(\ref{growth}) and~(\ref{equ:F2}), the latter with
$E=K^c_j$, shows that the inequality
\[
g(x)\chi_{K^c_j}(x)\leqslant\kappa(x,\nu_j)^{1/r_j}
\kappa(x,\theta_{K^c_j})^{1/q_j}\]  subsists n.e.~in~$A_j$, and
hence $\mu_s^j$-a.e.~in~$\mathrm X$ by Lemma~\ref{a.e.}. Having integrated
this relation with respect to~$\mu_s^j$, we then apply the H\"older
and, subsequently, the Cauchy--Schwarz inequalities to the integrals
on the right. This gives
\[\langle
g\chi_{K^c_j},\mu_s^j\rangle\leqslant\Biggl[\int\kappa(x,\nu_j)\,d\mu_s^j(x)\Biggr]^{1/r_j}\,
\Biggl[\int\kappa(x,\theta_{K^c_j})\,d\mu_s^j(x)\Biggr]^{1/q_j}\leqslant
\|\nu_j\|^{1/r_j}\|\theta_{K^c_j}\|^{1/q_j}\|\mu_s^j\|.\] Taking
limits here along $S\times\{K_j\}_{A_j}$ and using
relations~(\ref{twostars}) and (\ref{27}), we get~(\ref{25}) as
desired.
\end{proof}

Let now assumption~(\ref{capfinite}) in Theorem~\ref{lemma:exist}
hold. Then, in consequence of Lemma~\ref{lemma:lemma}, the extremal
measure $\tilde{\boldsymbol\gamma}$ belongs to~$\mathcal
E^+(\boldsymbol A,\boldsymbol a,g,J)$. Because of~(\ref{eqIorII}),
this yields that, actually, $\tilde{\boldsymbol\gamma}$ is a
solution to the $(\boldsymbol A,\boldsymbol a, g,\chi,J)$-problem,
and the statement on the solvability follows.

It has thus been proved that $\mathfrak E_\chi(\boldsymbol
A,\boldsymbol a,g,J)\subset\mathfrak S_\chi(\boldsymbol
A,\boldsymbol a,g,J)$. Since the converse also holds according to
inclusion~(\ref{incl}), these two classes have to be equal. On
account of assertion~(iii) of Lemma~\ref{lemma:WM}, this implies the
vague compactness of
$\mathfrak S_\chi(\boldsymbol A,\boldsymbol a,g,J)$.\hfill$\square$

\section{Proof of
Theorem~\ref{th:solvable1}}\label{proof:th:solvable1}

Recall that $L\subseteq I^-$ consists of $\ell\in I$ with
$C(A_\ell)=\infty$. Fix $\tilde{\boldsymbol\lambda}\in\mathfrak
S_\chi(\boldsymbol A,\boldsymbol a,g,CL)$ (it exists due to
Theorem~\ref{lemma:exist} with $J=CL$) and assume $a_\ell$, $\ell\in
L$, to satisfy relation~(\ref{br}).

Observe that $\langle g,\tilde{\lambda}^i\rangle$ is finite for each
$i\in I$, so that inequalities~(\ref{br}) make sense. Indeed, since
$\tilde{\lambda}^i(\mathrm X)\leqslant H<\infty$ according to
Corollary~\ref{min:proj}, one can assume $g$ to satisfy
condition~(\ref{growth}), for if not, then $g$ has to be bounded
from above and the finiteness of $\langle
g,\tilde{\lambda}^i\rangle$ is obvious. By the H\"older and
Cauchy--Schwarz inequalities, we then obtain
\[\langle g,\tilde{\lambda}^i\rangle\leqslant
\Biggl[\int\kappa(x,\nu_i)\,d\tilde{\lambda}^i(x)\Biggr]^{1/r_i}\,
\Biggl[\int1\,d\tilde{\lambda}^i\Biggr]^{1/q_i}\leqslant
\|\nu_i\|^{1/r_i}\|\tilde{\lambda}^i\|^{1/r_i} H^{1/q_i}<\infty\] as
desired. Here, $r_i$ and $\nu_i$ are the same as in
condition~(\ref{growth}) and $q_i:=r_i(r_i-1)^{-1}$.

To establish the theorem, for each $\ell\in L$ we choose probability
measures $\tau^\ell_n\in\mathcal E^+(A_\ell)$, $n\in\mathbb N$, such
that $\tau^\ell_n\to0$ vaguely as $n\to\infty$ and
\begin{equation}\label{tau}\|\tau^\ell_n\|\leqslant n^{-1}.\end{equation}
Such $\tau^\ell_n$ exist due to the fact that $C(A_\ell\setminus
K)=\infty$ for any compact~$K$, which in turn follows from
$C(A_\ell)=\infty$ because of the strict positive definiteness of
the kernel.

Further, for each $n\in\mathbb N$ define
${\hat{\boldsymbol\tau}}_n=({\hat{\tau}}^i_n)_{i\in I}$, where
$\hat{\tau}^i_n:=0$ for all $i\in CL$ and
\begin{equation*}\label{hattau}\hat{\tau}^i_n:=\frac{[a_i-\langle
g,\tilde{\lambda}^i\rangle]{\tau}^i_n}{\langle
g,\tau^i_n\rangle}\quad\text{otherwise}.\end{equation*} Since
$\langle g,\tau^\ell_n\rangle\geqslant g_{\inf}>0$ for all $\ell\in
L$ because of assumption~(\ref{ginf}), we have
\begin{equation}\label{tauvague}{\hat{\boldsymbol\tau}}_n\to\boldsymbol{0}\quad\text{vaguely
(as \ $n\to\infty$)}\end{equation} and also, due to relations
(\ref{ag}) and~(\ref{tau}),
\begin{equation}\label{on}\sum_{i\in
I}\,\|\hat{\tau}^i_n\|\leqslant |\boldsymbol
a_L|g_{\inf}^{-1}n^{-1}<\infty,\quad n\in\mathbb N.\end{equation}

Estimate~(\ref{on}) yields that
${\hat{\boldsymbol{\tau}}}_n\in\mathcal E^+(\boldsymbol A)$ for all
$n\in\mathbb N$ (see~Section~\ref{sec:cond}); hence, in view of the
convexity of~$\mathcal E^+(\boldsymbol A)$,
\begin{equation}\label{raz3}\boldsymbol\mu_n:=\tilde{\boldsymbol\lambda}+{\hat{\boldsymbol{\tau}}}_n\in\mathcal
E^+(\boldsymbol A,\boldsymbol a,g),\quad n\in\mathbb
N,\end{equation}  and consequently, by (\ref{gmu}),
\begin{align}\notag G_\chi(\boldsymbol A,\boldsymbol a,\boldsymbol g)\leqslant
G_\chi(\boldsymbol\mu_n)&=-\|\chi\|^2+\|\chi+R\boldsymbol\mu_n\|^2\\{}&
=-\|\chi\|^2+\|\chi+R\tilde{\boldsymbol\lambda}+R{\hat{\boldsymbol{\tau}}}_n\|^2\leqslant
G_\chi(\tilde{\boldsymbol\lambda})+c(n),\label{raz1}\end{align}
where
$c(n):=\|R{\hat{\boldsymbol{\tau}}}_n\|\bigl(\|R{\hat{\boldsymbol{\tau}}}_n\|+2\|\chi+R\tilde{\boldsymbol\lambda}\|\bigr)$.
But
\begin{equation}\label{raz2}G_\chi(\tilde{\boldsymbol\lambda})=G_\chi(\boldsymbol
A,\boldsymbol a,g,CL)\leqslant G_\chi(\boldsymbol A,\boldsymbol
a,g),\end{equation} while $c(n)\to0$ as $n\to\infty$, the latter
being a consequence of estimate~(\ref{on}). Combining
relations~(\ref{raz1}) and~(\ref{raz2}) and then letting
$n\to\infty$, we thus get
\begin{equation*}\label{raz4}G_\chi(\tilde{\boldsymbol\lambda})=G_\chi(\boldsymbol A,\boldsymbol
a,g,CL)=G_\chi(\boldsymbol A,\boldsymbol a,g)=
\lim_{n\to\infty}\,G_\chi(\boldsymbol\mu_n).\end{equation*} This
establishes assertion~(\ref{LLL}) and, on account of
inclusion~(\ref{raz3}), also the fact that
\[(\boldsymbol\mu_n)_{n\in\mathbb N}\in\mathbb M_\chi(\boldsymbol A,\boldsymbol a,g).\]

As $\boldsymbol\mu_n\to\tilde{\boldsymbol\lambda}$ vaguely because
of relation~(\ref{tauvague}), the last inclusion yields
$\tilde{\boldsymbol\lambda}\in\mathcal M_\chi(\boldsymbol
A,\boldsymbol a,g)$ and hence, by~(\ref{WM}) with $J=I$,
\begin{equation*}\label{hrr}\tilde{\boldsymbol\lambda}\in\mathfrak E_\chi(\boldsymbol A,\boldsymbol
a,g).\end{equation*} Using Corollary~\ref{extr.min}, we then see
that the main $(\boldsymbol A,\boldsymbol a,g,\chi)$-problem is
solvable if and only if $\tilde{\boldsymbol\lambda}$ serves as one
of its minimizers, which in turn, due to~(\ref{eqIorII}) with $J=I$,
holds if and only if
\[\tilde{\boldsymbol\lambda}\in\mathcal E^+(\boldsymbol A,\boldsymbol a,g).\] Since the latter is equivalent
to the requirement that all the inequalities~(\ref{br}) are
equalities, the proof is complete.\hfill$\square$

\section{Description of the $\boldsymbol f$-weighted extremal
potentials in the Gauss variational problem}\label{sec:descr}

It is seen from (\ref{Rpot}) that, for any
$\boldsymbol{\mu}\in\mathcal E^+(\boldsymbol{A})$, the $\boldsymbol
f$-weighted vector potential $\boldsymbol
W_{\boldsymbol\mu}=(W_{\boldsymbol\mu}^i)_{i\in I}$ is finite
n.e.~in~$\mathrm X$ and its components can be written in the form
\begin{equation}\label{wpopot}W_{\boldsymbol\mu}^i(x)=\alpha_i\kappa(x,\chi+R\boldsymbol\mu)\quad\text{n.e.~in
\ }\mathrm X.\end{equation} Therefore, for any
$\boldsymbol{\mu},\boldsymbol{\mu}_1\in\mathcal
E^+(\boldsymbol{A})$,
\begin{equation}\label{xxx}\bigl\langle\boldsymbol
W_{\boldsymbol\mu},\boldsymbol{\mu}_1\bigr\rangle=\sum_{i\in
I}\,\bigl\langle
W^i_{\boldsymbol\mu},\mu_1^i\bigr\rangle=\kappa(\chi+R\boldsymbol{\mu},R\boldsymbol{\mu}_1),\end{equation}
which follows from Lemma~\ref{lemma3.5} when applied to
$R\boldsymbol\mu_1$ and each of the functions
$\kappa(\cdot,\chi^++R\boldsymbol\mu^+)$ and
$\kappa(\cdot,\chi^-+R\boldsymbol\mu^-)$.

Let $\boldsymbol\gamma$ be extremal in the Gauss variational
problem. Taking~(\ref{gmu}) and~(\ref{eqIorII}) into account, we
then conclude from~(\ref{xxx}) with
$\boldsymbol\mu=\boldsymbol\mu_1=\boldsymbol\gamma$ that
\begin{equation}\label{wsum}\sum_{i\in I}\,\bigl\langle
W^i_{\boldsymbol\gamma},\gamma^i\bigr\rangle=\frac12\Bigl[\|\boldsymbol\gamma\|^2_{\mathcal
E^+(\boldsymbol A)}+G_{\chi}(\boldsymbol A,\boldsymbol
a,g)\Bigr],\end{equation} where the expression on the right (hence,
also that on the left) does not depend on the choice
of~$\boldsymbol\gamma$ in consequence of assertion~(ii) of
Lemma~\ref{lemma:WM}.\footnote{Cf.~also Corollary~\ref{eta3} below.}

The proof of Theorem~\ref{th:solvable2}, to be given in
Section~\ref{proof:th:solvable2}, is based on a description of the
$\boldsymbol f$-weighted extremal potential $\boldsymbol
W_{\boldsymbol\gamma}=(W_{\boldsymbol\gamma}^i)_{i\in I}$ provided
as follows.

\begin{theorem}\label{th:descpot1} Given\/ $\boldsymbol\gamma\in\mathfrak
E_\chi(\boldsymbol{A},\boldsymbol{a},g)$, it holds
that\/\footnote{Recall that $C(A_i)>0$ for all $i\in I$ (see the
footnote to assumption~(\ref{G:finite})).}
\begin{equation}\label{descpot1}
a_iW_{\boldsymbol\gamma}^i(x)\geqslant\bigl\langle
W_{\boldsymbol\gamma}^i,\gamma^i\bigr\rangle g(x)\quad\mbox{n.e.~in
\ } A_i,\quad i\in I.\end{equation} Relations\/~{\rm(\ref{wsum})}
and\/~{\rm(\ref{descpot1})} characterize uniquely the vector\/
$\langle W_{\boldsymbol\gamma}^i,\gamma^i\rangle$, $i\in I$, in the
sense that, if these two relations are satisfied by some\/~$\eta_i$
in place of\/~$\langle W_{\boldsymbol\gamma}^i,\gamma^i\rangle$,
then
\begin{equation}\label{etaeta}\eta_i=\bigl\langle
W_{\boldsymbol\gamma}^i,\gamma^i\bigr\rangle\quad\text{for all \
}i\in I.\end{equation}
\end{theorem}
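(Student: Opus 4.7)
The plan is to prove (\ref{descpot1}) by a first-variation argument on a minimizing net generating $\boldsymbol\gamma$, to upgrade the resulting weak (integrated) form to the pointwise n.e.\ assertion via a capacity-contradiction, and then to deduce uniqueness by combining the integrated form of (\ref{descpot1}) for a candidate $(\eta_i)$ with the sum identity (\ref{wsum}).

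First I would fix $i\in I$ and a test measure $\nu\in\mathcal E^+_{A_i}$ with $\langle g,\nu\rangle>0$, put $\tilde\nu:=a_i\nu/\langle g,\nu\rangle$, and pick, by Lemma~\ref{lemma:WM}(i), a minimizing net $(\boldsymbol\mu_s)_{s\in S}\in\mathbb M_\chi(\boldsymbol A,\boldsymbol a,g)$ generating $\boldsymbol\gamma$. For $t\in(0,1]$ the perturbation $\boldsymbol\mu_s^{(t,\nu)}$ obtained by replacing $\mu^i_s$ with $(1-t)\mu^i_s+t\tilde\nu$ and leaving the other components unchanged still lies in $\mathcal E^+(\boldsymbol A,\boldsymbol a,g)$, so $G_\chi(\boldsymbol\mu_s^{(t,\nu)})\geqslant G_\chi(\boldsymbol A,\boldsymbol a,g)$. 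Expanding via (\ref{gmu}) and using $W^i_{\boldsymbol\mu_s}=\alpha_i\kappa(\cdot,\chi+R\boldsymbol\mu_s)$ one obtains
\[2t\bigl[\langle W^i_{\boldsymbol\mu_s},\tilde\nu\rangle-\langle W^i_{\boldsymbol\mu_s},\mu^i_s\rangle\bigr]+t^2c_s\geqslant G_\chi(\boldsymbol A,\boldsymbol a,g)-G_\chi(\boldsymbol\mu_s),\]
with $c_s=\alpha_i^2\|\tilde\nu-\mu^i_s\|^2$ uniformly bounded by virtue of (\ref{twostars}). Since the right-hand side tends to $0$, dividing by $2t$, then letting $s$ run along $S$ and afterwards $t\to0^+$ should yield
\[a_i\langle W^i_{\boldsymbol\gamma},\nu\rangle\geqslant\langle W^i_{\boldsymbol\gamma},\gamma^i\rangle\langle g,\nu\rangle\qquad\text{for every }\nu\in\mathcal E^+_{A_i}.\]
The limit in $\langle W^i_{\boldsymbol\mu_s},\tilde\nu\rangle=\alpha_i\kappa(\chi+R\boldsymbol\mu_s,\tilde\nu)$ is immediate: the strong convergence $R\boldsymbol\mu_s\to R\boldsymbol\gamma$ in $\mathcal E$ (Theorem~\ref{lemma:semimetric}, via (\ref{fund})) together with Cauchy--Schwarz gives convergence to $\langle W^i_{\boldsymbol\gamma},\tilde\nu\rangle$. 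For $\langle W^i_{\boldsymbol\mu_s},\mu^i_s\rangle$ I would first write $\langle W^i_{\boldsymbol\mu_s},\mu^i_s\rangle=\langle W^i_{\boldsymbol\gamma},\mu^i_s\rangle+\alpha_i\kappa(R\boldsymbol\mu_s-R\boldsymbol\gamma,\mu^i_s)$, bound the remainder by $\|R\boldsymbol\mu_s-R\boldsymbol\gamma\|\cdot\|\mu^i_s\|=o(1)$ thanks to (\ref{twostars}), and then use the vague lower semicontinuity of $\langle W^i_{\boldsymbol\gamma},\cdot\rangle$ on $\mathfrak M^+(A_i)$ to get $\liminf_s\langle W^i_{\boldsymbol\gamma},\mu^i_s\rangle\geqslant\langle W^i_{\boldsymbol\gamma},\gamma^i\rangle$. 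That semicontinuity rests on splitting $W^i_{\boldsymbol\gamma}=\alpha_i\kappa(\cdot,\chi^++R\boldsymbol\gamma^+)-\alpha_i\kappa(\cdot,\chi^-+R\boldsymbol\gamma^-)$, the like-signed part being l.s.c.\ and nonnegative (hence in $\mathrm\Phi$ by Lemma~\ref{lemma:lower}), the oppositely-signed part being continuous on $A_i$ by Lemma~\ref{lemma:vague.cont2}, whose hypotheses hold because $S(\chi^\mp)\cup A^\mp$ is disjoint from $A_i$ and the kernel is continuous off the diagonal, has property $(\infty_\mathrm X)$, and is bounded on $A^+\times A^-$.

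To pass from the weak inequality to the pointwise assertion (\ref{descpot1}), assume for contradiction that $E:=\{x\in A_i:a_iW^i_{\boldsymbol\gamma}(x)<\langle W^i_{\boldsymbol\gamma},\gamma^i\rangle g(x)\}$ has $C(E)>0$; by what was shown just above the defining function is l.s.c.\ on $A_i$, so $E$ is open in $A_i$, and Lemma~\ref{Lemma2.3.1} supplies a nonzero $\omega\in\mathcal E^+_E\subset\mathcal E^+_{A_i}$, against which the strict pointwise inequality integrates (using Lemma~\ref{a.e.}) to contradict the weak one. For uniqueness, given $(\eta_i)$ satisfying (\ref{wsum}) and (\ref{descpot1}) with $\eta_i$ in place of $\langle W^i_{\boldsymbol\gamma},\gamma^i\rangle$, integrating the pointwise inequality against the bounded measure $\gamma^i$ (again Lemma~\ref{a.e.}) yields $a_i\langle W^i_{\boldsymbol\gamma},\gamma^i\rangle\geqslant\eta_i\langle g,\gamma^i\rangle$ for every $i$. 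Using Lemma~\ref{lemma:lemma} to conclude $\langle g,\gamma^j\rangle=a_j$ for $j$ with $C(A_j)<\infty$ gives $\eta_j\leqslant\langle W^j_{\boldsymbol\gamma},\gamma^j\rangle$ at those indices, while for the remaining indices the inequality $\langle g,\gamma^j\rangle\leqslant a_j$ combined with the identity (\ref{wsum}) (expanded via (\ref{xxx}) as $\sum_i\langle W^i_{\boldsymbol\gamma},\gamma^i\rangle=\kappa(\chi+R\boldsymbol\gamma,R\boldsymbol\gamma)$, which depends on $\boldsymbol\gamma$ only through $R\boldsymbol\gamma$ by Lemma~\ref{lemma:WM}(ii)) forces equality everywhere in (\ref{etaeta}).

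The main obstacle I anticipate is the limit passage in $\langle W^i_{\boldsymbol\mu_s},\mu^i_s\rangle$: strong convergence of $\boldsymbol\mu_s$ in $\mathcal E^+(\boldsymbol A)$ does not imply strong convergence of the scalar components $\mu^i_s$ (as the plates need not be mutually disjoint), so only vague convergence is available and one must exploit the one-sided vague l.s.c.\ of $\langle W^i_{\boldsymbol\gamma},\cdot\rangle$; getting the direction right (and so the sign in the variational inequality) depends crucially on the standing assumptions $S(\chi^\mp)\cap A^\pm=\varnothing$, $A^+\cap A^-=\varnothing$, and (\ref{kern.bound''}) which together furnish the continuity input to Lemma~\ref{lemma:vague.cont2}.
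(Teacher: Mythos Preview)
Your overall strategy---first variation along a minimizing net and a capacity contradiction---matches the paper's, but two steps do not close as written.

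\textbf{The limit $\langle W^i_{\boldsymbol\mu_s},\mu^i_s\rangle\to\langle W^i_{\boldsymbol\gamma},\gamma^i\rangle$.} Your reduction to $\liminf_s\langle W^i_{\boldsymbol\gamma},\mu^i_s\rangle\geqslant\langle W^i_{\boldsymbol\gamma},\gamma^i\rangle$ via vague lower semicontinuity is not justified. Lemma~\ref{lemma:lower} requires the integrand to lie in $\mathrm\Phi$, i.e.\ to be l.s.c.\ \emph{and} nonnegative (since $A_i$ is noncompact); $W^i_{\boldsymbol\gamma}$ need not be $\geqslant0$ on $A_i$. Splitting $W^i_{\boldsymbol\gamma}$ into a nonnegative l.s.c.\ piece minus a piece that is continuous on $A_i$ does not help: the subtracted piece, though continuous, is the potential of a positive measure and hence also l.s.c.\ and nonnegative, so vague l.s.c.\ goes the \emph{wrong} way for it, and mere pointwise continuity does not force $\int(\cdot)\,d\mu^i_s$ to converge when mass can escape to infinity. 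The paper avoids this entirely by invoking property~$(C_2)$: since $(\mu^i_s)_{s\in S}$ is strongly bounded in~$\mathcal E$ by~(\ref{twostars}) and vaguely convergent to~$\gamma^i$, it converges \emph{weakly} in~$\mathcal E$, whence $\kappa(\chi+R\boldsymbol\gamma,\mu^i_s)\to\kappa(\chi+R\boldsymbol\gamma,\gamma^i)$. Together with your Cauchy--Schwarz remainder bound this gives the actual limit~(\ref{threestars}), not merely a one-sided inequality.

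\textbf{Uniqueness.} Integrating the $\eta$-version of~(\ref{descpot1}) against $\gamma^i$ yields $a_i\langle W^i_{\boldsymbol\gamma},\gamma^i\rangle\geqslant\eta_i\langle g,\gamma^i\rangle$. For $i$ with $C(A_i)<\infty$ you get $\eta_i\leqslant\langle W^i_{\boldsymbol\gamma},\gamma^i\rangle$ via Lemma~\ref{lemma:lemma}, but for $\ell\in L$ one only has $\langle g,\gamma^\ell\rangle\leqslant a_\ell$ (possibly strict), and the displayed inequality no longer controls the sign of $\eta_\ell-\langle W^\ell_{\boldsymbol\gamma},\gamma^\ell\rangle$; the appeal to~(\ref{wsum}) then fails to force equality at each index when $|L|\geqslant1$. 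The paper's device is to integrate instead against $\mu^i_s$, where $\langle g,\mu^i_s\rangle=a_i$ \emph{exactly for every} $i\in I$: this gives $\langle W^i_{\boldsymbol\gamma},\mu^i_s\rangle\geqslant\max\{\eta_i,\langle W^i_{\boldsymbol\gamma},\gamma^i\rangle\}$, and summing, passing to the limit via~(\ref{xxx}) and weak convergence, and comparing with the two sum identities squeezes every term to equality.
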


\begin{proof}Having fixed $i\in I$, we start with the observation that, for any $\xi\in\mathcal
E$, $\langle W_{\boldsymbol\mu}^i,\xi\rangle$ is a strongly
continuous function of $\boldsymbol\mu\in\mathcal E^+(\boldsymbol
A)$. Indeed, this is clear from representation~(\ref{wpopot}) in
view of the isometry between $\mathcal E^+(\boldsymbol A)$
and~$R\mathcal E^+(\boldsymbol A)$.

Choose a net $(\boldsymbol\mu_s)_{s\in S}\in\mathbb
M_\chi(\boldsymbol{A},\boldsymbol{a},g)$ generating the extremal
measure~$\boldsymbol\gamma$. Then the above observation can slightly
be generalized in the following way:
\begin{equation}\label{threestars}\bigl\langle
W_{\boldsymbol\gamma}^i,\gamma^i\bigr\rangle=\lim_{s\in
S}\,\bigl\langle
W_{{\boldsymbol\mu}_s}^i,\mu_s^i\bigr\rangle.\end{equation}

Indeed, according to relation~(\ref{twostars}), the net
$(\mu_s^i)_{s\in S}$ is strongly bounded (say by~$M$). Since it
converges to~$\gamma^i$ vaguely, the property~$(C_2)$ (see
Section~\ref{sec:2}) yields that $\mu_s^i\to\gamma^i$ weakly; hence,
for every $\varepsilon>0$,
\[|\kappa(\chi+R\boldsymbol\gamma,\mu^i_s-\gamma^i)|<\varepsilon\]
whenever $s\in S$ is large enough. Furthermore, by the
Cauchy--Schwarz inequality,
\[|\kappa(\chi+R\boldsymbol\gamma,\mu^i_s)-\kappa(\chi+R\boldsymbol\gamma_s,\mu^i_s)|\leqslant
\|\mu^i_s\|\,\|R\boldsymbol\gamma-R\boldsymbol\mu_s\|\leqslant
M\|R\boldsymbol\gamma-R\boldsymbol\mu_s\|.\] As
$R\boldsymbol\mu_s\to R\boldsymbol\gamma$ strongly, the last two
relations combined give
\[\kappa(\chi+R\boldsymbol\gamma,\gamma^i)=\lim_{s\in
S}\,\kappa(\chi+R\boldsymbol\mu_s,\mu^i_s),\] which is equivalent
to~(\ref{threestars}) because of representation~(\ref{wpopot}).

In order to establish assertion~(\ref{descpot1}), we now assume, on
the contrary, that there is a set $E\subset A_i$ of interior
capacity nonzero with the property
\begin{equation*}\label{heart'}
a_iW_{\boldsymbol\gamma}^i(x)<\bigl\langle
W_{\boldsymbol\gamma}^i,\gamma^i\bigr\rangle g(x)\quad\mbox{for all
\ } x\in E.
\end{equation*}
Having chosen $\omega\in\mathcal E^+_E$ so that $\langle
g,\omega\rangle=a_i$ (such a measure exists), we then obtain
\begin{equation}\label{threeheart}
\bigl\langle W_{\boldsymbol\gamma}^i,\omega\bigr\rangle<\bigl\langle
W_{\boldsymbol\gamma}^i,\gamma^i\bigr\rangle.
\end{equation}

To get a contradiction, for all $r\in(0,1)$ and $s\in S$ we define
$\hat{\boldsymbol{\mu}}_s=(\hat{\mu}_s^k)_{k\in I}$, where
\[\hat{\mu}_s^k:=\left\{
\begin{array}{cl} {\mu}_s^i-r({\mu}_s^i-\omega)\quad & \mbox{if \ } k=i,\\ {\mu}_s^k & \mbox{otherwise}.\\
\end{array} \right.\]
Since then
\begin{equation*}\label{rhat}R\hat{\boldsymbol{\mu}}_s=-\alpha_ir({\mu}_s^i-\omega)+R\boldsymbol{\mu}_s,\end{equation*}
Corollary~\ref{lemma3.3} implies
$\hat{\boldsymbol{\mu}}_s\in\mathcal E^+(\boldsymbol A)$. Actually,
$\hat{\boldsymbol{\mu}}_s\in\mathcal E^+(\boldsymbol A,\boldsymbol
a,g)$ for all $s\in S$, which in view of relations~(\ref{gmu}),
(\ref{twostars}), and~(\ref{wpopot}) yields
\begin{align*}G_\chi(\boldsymbol A,\boldsymbol
a,g)&\leqslant
G_\chi(\hat{\boldsymbol{\mu}}_s)=\|R\hat{\boldsymbol{\mu}}_s\|^2+2\kappa(\chi,R\hat{\boldsymbol{\mu}}_s)\\{}&=
\|R{\boldsymbol{\mu}}_s\|^2-2\alpha_ir\kappa(R{\boldsymbol{\mu}}_s,{\mu}_s^i-\omega)+r^2\|{\mu}_s^i-\omega\|^2+
2\kappa(\chi,R{\boldsymbol{\mu}}_s)-2\alpha_ir\kappa(\chi,{\mu}_s^i-\omega)\\{}&\leqslant
G_\chi({\boldsymbol{\mu}}_s)- 2r\bigl\langle
W^i_{\boldsymbol{\mu}_s},{\mu}_s^i-\omega\bigr\rangle +r^2M_1,
\end{align*}
$M_1$ being positive. Passing here to the limit as $s$ ranges along
$S$, on account of~(\ref{min}), (\ref{threestars}) and the
continuity of $\bigl\langle
W^i_{\boldsymbol{\mu}},\omega\bigr\rangle$ relative to
$\boldsymbol\mu\in\mathcal E^+(\boldsymbol A)$ we get
\[0\leqslant-2r\bigl\langle W^i_{\boldsymbol\gamma},\gamma^i-\omega\bigr\rangle+r^2M_1,\]
which leads to $\langle
W^i_{\boldsymbol\gamma},\gamma^i-\omega\rangle\leqslant0$ by letting
$r\to0$. This contradicts to inequality~(\ref{threeheart}).

To complete the proof, assume now relations~(\ref{wsum})
and~(\ref{descpot1}) to hold also with some~$\eta_i$, $i\in I$, in
place of~$\langle W_{\boldsymbol\gamma}^i,\gamma^i\rangle$. Using
the fact that the union of two universally measurable sets with
interior capacity zero has interior capacity zero as well
(see~\cite{F1}), for each $i\in I$ we then have
\begin{equation*}\label{opana}
a_iW_{\boldsymbol\gamma}^i(x)\geqslant\max\Bigl\{\eta_i,\bigl\langle
W_{\boldsymbol\gamma}^i,\gamma^i\bigr\rangle\Bigr\}
g(x)\quad\mbox{n.e. in \ }A_i,\end{equation*} which according to
Lemma~\ref{a.e.} also holds $\mu_s^i$-a.e.~in~$\mathrm X$ for each
$s\in S$. Having integrated this relation with respect to~$\mu_s^i$
and then summing up the inequalities obtained over all $i\in I$, on
account of~(\ref{xxx}) and~(\ref{wsum}) we get
\begin{align*}
\kappa(\chi+R\boldsymbol{\gamma},R\boldsymbol{\mu}_s)&=\sum_{i\in
I}\,\bigl\langle
W_{\boldsymbol\gamma}^i,\mu_s^i\bigr\rangle\geqslant\sum_{i\in
I}\,\max\Bigl\{\eta_i,\bigl\langle
W_{\boldsymbol\gamma}^i,\gamma^i\bigr\rangle\Bigr\}\geqslant\sum_{i\in
I}\,\eta_i\\{}&=\frac12\Bigl[\|\boldsymbol\gamma\|^2_{\mathcal
E^+(\boldsymbol A)}+G_{\chi}(\boldsymbol A,\boldsymbol
a,g)\Bigr]=\sum_{i\in I}\,\bigl\langle
W_{\boldsymbol\gamma}^i,\gamma^i\bigr\rangle=\kappa(\chi+R\boldsymbol{\gamma},R\boldsymbol{\gamma}),\end{align*}
which in view of the strong (hence, weak) convergence of
$(R\boldsymbol{\mu}_s)_{s\in S}$ to~$R\boldsymbol{\gamma}$
proves~(\ref{etaeta}).
\end{proof}

\begin{corollary}\label{eta3}For any\/ ${\boldsymbol\gamma},{\boldsymbol\gamma}_1\in\mathfrak
E_\chi(\boldsymbol{A},\boldsymbol{a},g)$, it holds that
\[\bigl\langle
W_{{\boldsymbol\gamma}_1}^i,\gamma_1^i\bigr\rangle=\bigl\langle
W_{\boldsymbol\gamma}^i,\gamma^i\bigr\rangle\quad\text{for all \
}i\in I.\]
\end{corollary}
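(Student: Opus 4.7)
The plan is to verify that the numbers $\eta_i:=\bigl\langle W^i_{\boldsymbol\gamma_1},\gamma_1^i\bigr\rangle$, $i\in I$, satisfy relations~(\ref{wsum}) and~(\ref{descpot1}) with $\boldsymbol\gamma$ in place of $\boldsymbol\gamma_1$, and then to invoke the uniqueness part of Theorem~\ref{th:descpot1}.

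First, by assertion~(ii) of Lemma~\ref{lemma:WM} one has $R\boldsymbol\gamma=R\boldsymbol\gamma_1$; hence representation~(\ref{wpopot}) yields
\[W^i_{\boldsymbol\gamma}(x)=\alpha_i\kappa(x,\chi+R\boldsymbol\gamma)=\alpha_i\kappa(x,\chi+R\boldsymbol\gamma_1)=W^i_{\boldsymbol\gamma_1}(x)\quad\text{n.e.\ in \ }\mathrm X,\]
for every $i\in I$. Furthermore, by~(\ref{twonorms}) and the equality $R\boldsymbol\gamma=R\boldsymbol\gamma_1$, we have $\|\boldsymbol\gamma\|_{\mathcal E^+(\boldsymbol A)}=\|\boldsymbol\gamma_1\|_{\mathcal E^+(\boldsymbol A)}$.

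Apply relation~(\ref{wsum}) to the extremal measure~$\boldsymbol\gamma_1$:
\[\sum_{i\in I}\,\eta_i=\sum_{i\in I}\,\bigl\langle W^i_{\boldsymbol\gamma_1},\gamma_1^i\bigr\rangle=\tfrac12\Bigl[\|\boldsymbol\gamma_1\|^2_{\mathcal E^+(\boldsymbol A)}+G_\chi(\boldsymbol A,\boldsymbol a,g)\Bigr]=\tfrac12\Bigl[\|\boldsymbol\gamma\|^2_{\mathcal E^+(\boldsymbol A)}+G_\chi(\boldsymbol A,\boldsymbol a,g)\Bigr],\]
so $(\eta_i)_{i\in I}$ satisfies~(\ref{wsum}) relative to~$\boldsymbol\gamma$. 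Likewise, applying relation~(\ref{descpot1}) to~$\boldsymbol\gamma_1$ and using the n.e.\ identity $W^i_{\boldsymbol\gamma}=W^i_{\boldsymbol\gamma_1}$ together with the fact that a union of two sets of interior capacity zero again has interior capacity zero (see~\cite{F1}), we obtain
\[a_iW^i_{\boldsymbol\gamma}(x)=a_iW^i_{\boldsymbol\gamma_1}(x)\geqslant\bigl\langle W^i_{\boldsymbol\gamma_1},\gamma_1^i\bigr\rangle g(x)=\eta_ig(x)\quad\text{n.e.\ in \ }A_i,\quad i\in I.\]

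Thus $(\eta_i)_{i\in I}$ fulfils both~(\ref{wsum}) and~(\ref{descpot1}) in place of $\bigl(\langle W^i_{\boldsymbol\gamma},\gamma^i\rangle\bigr)_{i\in I}$, and the uniqueness assertion of Theorem~\ref{th:descpot1} (cf.~(\ref{etaeta})) gives $\eta_i=\bigl\langle W^i_{\boldsymbol\gamma},\gamma^i\bigr\rangle$ for every $i\in I$, as claimed.
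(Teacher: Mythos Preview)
Your proof is correct and follows essentially the same route as the paper's own argument: both use the equivalence of $\boldsymbol\gamma$ and $\boldsymbol\gamma_1$ from Lemma~\ref{lemma:WM}(ii) to identify the weighted potentials n.e., then invoke the uniqueness clause of Theorem~\ref{th:descpot1}. The only cosmetic difference is that the paper sets $\eta_i:=\langle W^i_{\boldsymbol\gamma},\gamma^i\rangle$ and applies the uniqueness statement to~$\boldsymbol\gamma_1$, whereas you swap the roles; your version is also slightly more explicit in verifying~(\ref{wsum}).
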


\begin{proof}Indeed, then $\|\boldsymbol\gamma-{\boldsymbol\gamma}_1\|_{\mathcal E^+(\boldsymbol A)}=0$
by assertion~(ii) of Lemma~\ref{lemma:WM}, which in view of
Corollary~\ref{lemma:potequiv} yields $\boldsymbol
W_{\boldsymbol\gamma}=\boldsymbol W_{{\boldsymbol\gamma}_1}$
n.e.~in~$\mathrm X$. Combining this with relation~(\ref{descpot1}),
we get
\begin{equation*}
a_iW_{{\boldsymbol\gamma}_1}^i(x)\geqslant\bigl\langle
W_{\boldsymbol\gamma}^i,\gamma^i\bigr\rangle g(x)\quad\mbox{n.e.~in
\ } A_i,\quad i\in I,\end{equation*} which establishes the corollary
due to the uniqueness statement in
Theorem~\ref{th:descpot1}.
\end{proof}

\section{Proof of Theorem~\ref{th:solvable2}}\label{proof:th:solvable2}

Fix $\tilde{\boldsymbol\lambda}\in\mathfrak S_\chi(\boldsymbol
A,\boldsymbol a,g,CL)$; it exists according to
Theorem~\ref{lemma:exist} with $J=CL=I\setminus\{\ell\}$. In
consequence of Theorem~\ref{th:solvable1},
Theorem~\ref{th:solvable2} will be proved once we establish the
existence of a solution to the (main) $(\boldsymbol A,\boldsymbol
a,g,\chi)$-problem under the hypothesis
\begin{equation}
\label{aelll}a_\ell<\langle g,\tilde{\lambda}^\ell\rangle.
\end{equation}
(Observe that the value on the right does not depend on the choice
of~$\tilde{\boldsymbol\lambda}$, which is clear from
assumption~(\ref{nonint}) in view of
$R\tilde{\boldsymbol\lambda}_1=R\tilde{\boldsymbol\lambda}_2$ for
all
$\tilde{\boldsymbol\lambda}_1,\tilde{\boldsymbol\lambda}_2\in\mathfrak
S_\chi(\boldsymbol A,\boldsymbol a,g,CL)$; see
Corollary~\ref{cor:uniqu}.)

Let $\{K_\ell\}_{A_\ell}$ denote the increasing filtering family of
all compact subsets~$K_\ell$ of~$A_\ell$. For each
$K_\ell\in\{K_\ell\}_{A_\ell}$, define the $(I^+,I^-)$-condenser
$\boldsymbol A_{K_\ell}=\bigl(A^{K_\ell}_i\bigr)_{i\in I}$ by
\begin{equation*}
 \label{exh}A^{K_\ell}_i:=A_i\quad\text{for all \ }i\ne\ell,\qquad
A^{K_\ell}_\ell:=K_\ell.
\end{equation*}

\begin{lemma}\label{lemma:cont}There holds the following statement on continuity:
\begin{equation*}\label{contnew}
G_{\chi}(\boldsymbol{A},\boldsymbol{a},g)=\lim_{K_\ell\uparrow
A_\ell}\,
G_{\chi}(\boldsymbol{A}_{K_\ell},\boldsymbol{a},g).\end{equation*}
\end{lemma}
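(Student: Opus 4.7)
The plan is to prove the equality by establishing two matching inequalities. The inclusion $\mathcal{E}^+(\boldsymbol{A}_{K_\ell},\boldsymbol{a},g)\subset\mathcal{E}^+(\boldsymbol{A},\boldsymbol{a},g)$, valid because $K_\ell\subset A_\ell$, immediately yields $G_\chi(\boldsymbol{A},\boldsymbol{a},g)\leqslant G_\chi(\boldsymbol{A}_{K_\ell},\boldsymbol{a},g)$ for every $K_\ell\in\{K_\ell\}_{A_\ell}$ and shows moreover that $K_\ell\mapsto G_\chi(\boldsymbol{A}_{K_\ell},\boldsymbol{a},g)$ is monotone decreasing along the filtering family. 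Passing to the limit along $\{K_\ell\}_{A_\ell}$ therefore gives $G_\chi(\boldsymbol{A},\boldsymbol{a},g)\leqslant\lim_{K_\ell\uparrow A_\ell}G_\chi(\boldsymbol{A}_{K_\ell},\boldsymbol{a},g)$.

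For the reverse inequality, the idea is to approximate any admissible $\boldsymbol{\mu}\in\mathcal{E}^+(\boldsymbol{A},\boldsymbol{a},g)$ by measures $\boldsymbol{\mu}_{K_\ell}\in\mathcal{E}^+(\boldsymbol{A}_{K_\ell},\boldsymbol{a},g)$ with $G_\chi(\boldsymbol{\mu}_{K_\ell})\to G_\chi(\boldsymbol{\mu})$; taking the infimum over $\boldsymbol{\mu}$ will then deliver $\lim_{K_\ell\uparrow A_\ell}G_\chi(\boldsymbol{A}_{K_\ell},\boldsymbol{a},g)\leqslant G_\chi(\boldsymbol{A},\boldsymbol{a},g)$. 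Since $C(A_\ell)>0$, I would fix once and for all a compact set $K_0\subset A_\ell$ with $C(K_0)>0$ and a nonzero $\tau\in\mathcal{E}^+_{K_0}$ (available by Lemma~\ref{Lemma2.3.1}); then $\langle g,\tau\rangle\geqslant g_{\inf}\tau(\mathrm{X})>0$. For $K_\ell\supseteq K_0$ I would set $\mu^i_{K_\ell}:=\mu^i$ when $i\ne\ell$ and
\[
\mu^\ell_{K_\ell}:=\mu^\ell|_{K_\ell}+c_{K_\ell}\tau,\qquad c_{K_\ell}:=\frac{a_\ell-\langle g,\mu^\ell|_{K_\ell}\rangle}{\langle g,\tau\rangle}.
\]
Monotonicity $\mu^\ell|_{K_\ell}\leqslant\mu^\ell$ ensures $c_{K_\ell}\geqslant 0$; the normalisation $\langle g,\mu^\ell_{K_\ell}\rangle=a_\ell$ is immediate; and since $R\boldsymbol{\mu}_{K_\ell}-R\boldsymbol{\mu}=\alpha_\ell(c_{K_\ell}\tau-\mu^\ell|_{A_\ell\setminus K_\ell})$ is a difference of two measures of finite energy, Corollary~\ref{lemma3.3} gives $\boldsymbol{\mu}_{K_\ell}\in\mathcal{E}^+(\boldsymbol{A}_{K_\ell},\boldsymbol{a},g)$.

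The concluding step, and the main technical point, is to prove the strong convergence $\|R\boldsymbol{\mu}_{K_\ell}-R\boldsymbol{\mu}\|_\kappa\to 0$; representation~\eqref{gmu} will then yield $G_\chi(\boldsymbol{\mu}_{K_\ell})\to G_\chi(\boldsymbol{\mu})$ directly. For the vanishing of the correction term $c_{K_\ell}\tau$, I would apply \cite[Lemma~1.2.2]{F1} to the lsc function $g$ and the measure $\mu^\ell$, obtaining $\langle g,\mu^\ell|_{K_\ell}\rangle\uparrow\langle g,\mu^\ell\rangle=a_\ell$ and hence $c_{K_\ell}\to 0$. The genuinely nontrivial piece is $\|\mu^\ell-\mu^\ell|_{K_\ell}\|_\kappa\to 0$, which I would obtain from the expansion
\[
\|\mu^\ell-\mu^\ell|_{K_\ell}\|^2=\|\mu^\ell\|^2-2\kappa(\mu^\ell,\mu^\ell|_{K_\ell})+\|\mu^\ell|_{K_\ell}\|^2
\]
by invoking \cite[Lemma~1.2.2]{F1} twice: once with the measure $\mu^\ell\otimes\mu^\ell$ and the lsc kernel $\kappa$ on $\mathrm{X}\times\mathrm{X}$ to give $\|\mu^\ell|_{K_\ell}\|^2\uparrow\|\mu^\ell\|^2$, and once with the measure $\mu^\ell$ and the lsc potential $\kappa(\cdot,\mu^\ell)$ to give $\kappa(\mu^\ell,\mu^\ell|_{K_\ell})\uparrow\|\mu^\ell\|^2$. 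Combining these two limits produces the required strong convergence, completing the proof.
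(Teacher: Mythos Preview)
Your proof is correct and follows the same overall strategy as the paper: the trivial inequality from the inclusion $\mathcal{E}^+(\boldsymbol{A}_{K_\ell},\boldsymbol{a},g)\subset\mathcal{E}^+(\boldsymbol{A},\boldsymbol{a},g)$, and for the converse, approximation of a fixed $\boldsymbol\mu\in\mathcal E^+(\boldsymbol A,\boldsymbol a,g)$ by admissible measures for~$\boldsymbol A_{K_\ell}$ built from the truncation $\mu^\ell|_{K_\ell}$, with the key limits supplied by \cite[Lemma~1.2.2]{F1}.

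The one noteworthy difference is in how you restore the constraint $\langle g,\cdot\rangle=a_\ell$ after truncation. The paper simply rescales, taking $\hat\mu^\ell_{K_\ell}:=\dfrac{a_\ell}{\langle g,\mu^\ell|_{K_\ell}\rangle}\,\mu^\ell|_{K_\ell}$, and then expands $G_\chi(\hat{\boldsymbol\mu}_{K_\ell})$ directly, passing to the limit term by term via the four instances of \cite[Lemma~1.2.2]{F1}. You instead add a fixed compactly supported correction $c_{K_\ell}\tau$ and argue through the \emph{strong} convergence $\|R\boldsymbol\mu_{K_\ell}-R\boldsymbol\mu\|\to0$, invoking representation~\eqref{gmu} once at the end. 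Your route is a touch cleaner conceptually (one strong-convergence statement in place of several separate limits), at the cost of introducing the auxiliary measure~$\tau$; the paper's rescaling avoids that extra object but requires first locating $K^0_\ell$ with $\langle g,\mu^\ell|_{K^0_\ell}\rangle>0$. Both arguments ultimately rest on exactly the same applications of Fuglede's lemma, so the difference is cosmetic rather than substantive.
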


\begin{proof} Fix
$\boldsymbol\mu\in\mathcal E^+(\boldsymbol{A},\boldsymbol{a},g)$.
For every $K_\ell\in\{K_\ell\}_{A_\ell}$, consider
$\mu^\ell_{K_\ell}:=\mu^\ell|_{K_\ell}$, the trace of~$\mu^\ell$
on~$K_\ell$. Applying \cite[Lemma~1.2.2]{F1}, we then
obtain\footnote{See Section~\ref{sec:main} for the notation
$\boldsymbol\mu_J$.}
\begin{align}
\label{w}\langle g,\mu^\ell\rangle&=\lim_{K_\ell\uparrow A_\ell}\,\langle g,\mu^\ell_{K_\ell}\rangle,\\
\label{www}\kappa(\chi,\mu^\ell)&=\lim_{K_\ell\uparrow A_\ell}\,\kappa(\chi,\mu^\ell_{K_\ell}),\\
\label{ww'}\kappa(\mu^\ell,\mu^\ell)&=\lim_{K_\ell\uparrow A_\ell}\,\kappa(\mu^\ell_{K_\ell},\mu^\ell_{K_\ell}),\\
 \label{ww}\kappa(\mu^\ell,R\boldsymbol\mu_{CL})&=\lim_{K_\ell\uparrow
A_\ell}\,\kappa(\mu^\ell_{K_\ell},R\boldsymbol\mu_{CL}).
\end{align}
Choose a compact set $K^0_\ell\subset A_\ell$ so that $\langle
g,\mu^\ell_{K^0_\ell}\rangle>0$, which is possible due to~(\ref{w}),
and for all $K_\ell\in\{K_\ell\}_{A_\ell}$ that follow~$K^0_\ell$
define
$\hat{\boldsymbol\mu}_{\boldsymbol{A}_{K_\ell}}=\bigl(\hat{\mu}^i_{\boldsymbol{A}_{K_\ell}}\bigr)_{i\in
I}$, where
\begin{equation}\label{hatmu'}\hat{\mu}^i_{\boldsymbol{A}_{K_\ell}}:=\mu^i\quad\text{for all \ }i\ne\ell,\qquad
\hat{\mu}^\ell_{\boldsymbol{A}_{K_\ell}}:=\frac{a_\ell}{\langle
g,\mu_{K_\ell}^\ell\rangle}\,\mu_{K_\ell}^\ell.\end{equation} Since
then
\[R\hat{\boldsymbol\mu}_{\boldsymbol{A}_{K_\ell}}=R\boldsymbol\mu_{CL}-\frac{a_\ell}{\langle
g,\mu_{K_\ell}^\ell\rangle}\,\mu_{K_\ell}^\ell,\] from
Corollary~\ref{lemma3.3} we get
$\hat{\boldsymbol\mu}_{\boldsymbol{A}_{K_\ell}}\in\mathcal
E^+(\boldsymbol{A}_{K_\ell},\boldsymbol a,g)$, and hence, by
(\ref{gmu}) and~(\ref{w})--(\ref{hatmu'}),
\begin{equation*}
\begin{split}
G_\chi(\mu)&=\|R\boldsymbol\mu_{CL}\|^2-
2\kappa(\mu^\ell,R\boldsymbol\mu_{CL})+\|\mu^\ell\|^2+2\kappa(\chi,R\boldsymbol\mu_{CL})-
2\kappa(\chi,\mu^\ell)\\
{}&=\|R\boldsymbol\mu_{CL}\|^2+2\kappa(\chi,R\boldsymbol\mu_{CL})+
\lim_{K_\ell\uparrow
A_\ell}\,\Bigl\{\|\hat{\mu}^\ell_{\boldsymbol{A}_{K_\ell}}\|^2-
2\kappa(\hat{\mu}^\ell_{K_\ell},
R\boldsymbol\mu_{CL})-2\kappa(\chi,\hat{\mu}^\ell_{\boldsymbol{A}_{K_\ell}})\Bigr\}\\
{}&=\lim_{K_\ell\uparrow
A_\ell}\,\Bigl\{\|R\hat{\boldsymbol\mu}_{\boldsymbol{A}_{K_\ell}}\|^2+
2\kappa(\chi,R\hat{\boldsymbol\mu}_{\boldsymbol{A}_{K_\ell}})\Bigr\}\geqslant\lim_{K_\ell\uparrow
A_\ell}\,G_\chi(\boldsymbol{A}_{K_\ell},\boldsymbol a,g),\end{split}
\end{equation*}
which in view of the arbitrary choice of $\boldsymbol\mu\in\mathcal
E^+(\boldsymbol{A},\boldsymbol{a},g)$ proves
\[G_\chi(\boldsymbol{A},\boldsymbol a,g)\geqslant\lim_{K_\ell\uparrow A_\ell}\,G_\chi(\boldsymbol{A}_{K_\ell},
\boldsymbol a,g).\]
Since the converse inequality is obvious, the lemma is
established.
\end{proof}

In the rest of the proof we assume $G_\chi(\boldsymbol
A_{K_\ell},\boldsymbol a,g)<\infty$ for all
$K_\ell\in\{K_\ell\}_{A_\ell}$, which involves no loss of generality
because of condition~(\ref{G:finite}) and Lemma~\ref{lemma:cont}.
Then, according to Theorem~\ref{th:suff}, for every
$\boldsymbol{A}_{K_\ell}$ there exists
$\boldsymbol\lambda_{\boldsymbol{A}_{K_\ell}}\in\mathfrak
S_\chi(\boldsymbol{A}_{K_\ell},\boldsymbol a,g)$, while in
consequence of Lemma~\ref{lemma:cont}, these minimizers form a net
minimizing in the $(\boldsymbol A,\boldsymbol a,g,\chi)$-problem,
i.e.
\begin{equation}\label{last}(\boldsymbol\lambda_{\boldsymbol{A}_{K_\ell}})_{K_\ell\in\{K_\ell\}_{A_\ell}}\in
\mathbb M_\chi(\boldsymbol{A},\boldsymbol a,g).
\end{equation}

Fix a (particular) extremal measure $\boldsymbol\gamma\in\mathfrak
E_\chi(\boldsymbol{A},\boldsymbol{a},g)$ that is a vague cluster
point of the net
$(\boldsymbol\lambda_{\boldsymbol{A}_{K_\ell}})_{K_\ell\in\{K_\ell\}_{A_\ell}}$;
it exists due to inclusion~(\ref{last}) and assertion~(i) of
Lemma~\ref{lemma:WM}.

\begin{lemma}\label{gamma00} For this\/ {\rm(}particular\/{\rm)} extremal measure\/~$\boldsymbol\gamma$, it holds that
\begin{equation}\label{descpot2}
a_iW_{\boldsymbol\gamma}^i(x)\leqslant\bigl\langle
W_{\boldsymbol\gamma}^i,\gamma^i\bigr\rangle g(x)\quad\mbox{for all
\ } x\in S(\gamma^i),\quad i\in I,\end{equation} and therefore, in
consequence of relation\/~{\rm(\ref{descpot1})},
\begin{equation}\label{descpot3}
a_iW_{\boldsymbol\gamma}^i(x)=\bigl\langle
W_{\boldsymbol\gamma}^i,\gamma^i\bigr\rangle g(x)\quad\mbox{n.e.~in
\ } S(\gamma^i),\quad i\in I.\end{equation}\end{lemma}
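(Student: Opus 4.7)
The plan is to prove~(\ref{descpot2}) by first establishing the analogous pointwise upper bound for each truncated minimizer $\boldsymbol\lambda_{K_\ell}:=\boldsymbol\lambda_{\boldsymbol A_{K_\ell}}$, and then passing to the limit along the subnet through which the chosen extremal measure~$\boldsymbol\gamma$ arises. For each compact $K_\ell\in\{K_\ell\}_{A_\ell}$, the truncated condenser $\boldsymbol A_{K_\ell}$ has every plate either compact or of finite interior capacity, so Theorem~\ref{th:suff} produces a minimizer~$\boldsymbol\lambda_{K_\ell}$. Setting $c^i_{K_\ell}:=\bigl\langle W^i_{\boldsymbol\lambda_{K_\ell}},\lambda^i_{K_\ell}\bigr\rangle$, Theorem~\ref{th:descpot1} applied inside the truncated problem gives
\[a_i\,W^i_{\boldsymbol\lambda_{K_\ell}}(x)\geqslant c^i_{K_\ell}\,g(x)\quad\text{n.e.\ on }A^{K_\ell}_i;\]
since $\lambda^i_{K_\ell}$ is bounded (Corollary~\ref{min:proj}), Lemma~\ref{a.e.} upgrades this to a $\lambda^i_{K_\ell}$-a.e.\ statement, and integrating against $\lambda^i_{K_\ell}$ together with $\langle g,\lambda^i_{K_\ell}\rangle=a_i$ forces equality $\lambda^i_{K_\ell}$-a.e.\ in~$\mathrm X$.

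Next I would show that $W^i_{\boldsymbol\lambda_{K_\ell}}$ is lower semicontinuous on $A^{K_\ell}_i$. For $i\in I^+$, using $S(\chi^\pm)\cap A^\mp=\varnothing$ and $A^+\cap A^-=\varnothing$ one writes
\[W^i_{\boldsymbol\lambda_{K_\ell}}(x)=\kappa\bigl(x,\chi^++R\boldsymbol\lambda^+_{K_\ell}\bigr)-\kappa\bigl(x,\chi^-+R\boldsymbol\lambda^-_{K_\ell}\bigr);\]
the first summand is l.s.c.\ everywhere by Lemma~\ref{lemma:lower}, while the second is continuous on~$A^+$ by Lemma~\ref{lemma:vague.cont2}, whose kernel bound~(\ref{kern.bound}) is secured by assumption~(\ref{kern.bound''}) together with property~$(\infty_\mathrm X)$. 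The case $i\in I^-$ is symmetric. Combined with the a.e.\ equality from the previous paragraph, this lower semicontinuity forbids strict violation at any $x_0\in S(\lambda^i_{K_\ell})$: were $a_iW^i(x_0)>c^i_{K_\ell}g(x_0)$, a neighborhood~$U$ of~$x_0$ would carry $a_iW^i-c^i_{K_\ell}g>0$, and $\lambda^i_{K_\ell}(U)>0$ would contradict the vanishing of $\int(a_iW^i-c^i_{K_\ell}g)\,d\lambda^i_{K_\ell}$. Hence
\[a_i\,W^i_{\boldsymbol\lambda_{K_\ell}}(x)\leqslant c^i_{K_\ell}\,g(x)\quad\text{for all }x\in S(\lambda^i_{K_\ell}).\]

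By~(\ref{last}) the net $(\boldsymbol\lambda_{K_\ell})$ is minimizing for the full $(\boldsymbol A,\boldsymbol a,g,\chi)$-problem; passing to a subnet whose vague limit is the chosen~$\boldsymbol\gamma$, Lemma~\ref{lemma:WM} delivers strong convergence along that subnet, so in particular $R\boldsymbol\lambda_{K_\ell}\to R\boldsymbol\gamma$ strongly in~$\mathcal E$. Fix $x_0\in S(\gamma^i)$; vague convergence $\lambda^i_{K_\ell}\to\gamma^i$ guarantees $\lambda^i_{K_\ell}(U)>0$ eventually for every open neighborhood~$U$ of~$x_0$, so one selects $x_{K_\ell}\in S(\lambda^i_{K_\ell})$ with $x_{K_\ell}\to x_0$. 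Evaluating the second paragraph's inequality at~$x_{K_\ell}$ and letting $K_\ell\uparrow A_\ell$, the right-hand side tends to $\langle W^i_{\boldsymbol\gamma},\gamma^i\rangle g(x_0)$: $g$ is continuous, and $c^i_{K_\ell}\to\langle W^i_{\boldsymbol\gamma},\gamma^i\rangle$ by an adaptation of the argument yielding~(\ref{threestars}), combining strong convergence of~$R\boldsymbol\lambda_{K_\ell}$, property~$(C_2)$, and the Cauchy--Schwarz inequality. The main obstacle is the companion lower bound $\liminf_{K_\ell}W^i_{\boldsymbol\lambda_{K_\ell}}(x_{K_\ell})\geqslant W^i_{\boldsymbol\gamma}(x_0)$, where both the evaluation point and the measure vary: using the splitting of the second paragraph, the opposite-sign piece converges by the joint continuity of Lemma~\ref{lemma:vague.cont} (available because a compact neighborhood of~$x_0$ in $A^\pm$ is disjoint from the fixed closed set $A^\mp\cup S(\chi^\mp)$ supporting every $R\boldsymbol\lambda^\mp_{K_\ell}+\chi^\mp$, with uniformly bounded total masses), while the same-sign piece yields the desired lower bound by the principle of descent for l.s.c.\ kernels, a Fatou-type consequence of Lemma~\ref{lemma:lower}. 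Combining the two pieces establishes~(\ref{descpot2}), and~(\ref{descpot3}) follows by pairing with~(\ref{descpot1}).
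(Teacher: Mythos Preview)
Your argument is correct and follows the same overall architecture as the paper's proof: establish the pointwise upper bound on the support of each truncated minimizer~$\boldsymbol\lambda_{\boldsymbol A_{K_\ell}}$, then pass to the limit using exactly the splitting you describe (joint continuity from Lemma~\ref{lemma:vague.cont} for the opposite-sign piece, the descent principle for the same-sign piece, and the~(\ref{threestars})-type argument for the constants~$c^i_{K_\ell}$).

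The only noteworthy difference is at the first stage. The paper obtains the support inequality
\[
a_iW^i_{\boldsymbol\lambda_{\boldsymbol A_{K_\ell}}}(\zeta_{K_\ell})\leqslant\bigl\langle W^i_{\boldsymbol\lambda_{\boldsymbol A_{K_\ell}}},\lambda^i_{\boldsymbol A_{K_\ell}}\bigr\rangle\,g(\zeta_{K_\ell})
\]
by a direct citation of~\cite[Theorem~7.2]{ZPot2}, whereas you re-derive it from Theorem~\ref{th:descpot1} (applied to the truncated problem, which is legitimate since $\boldsymbol\lambda_{\boldsymbol A_{K_\ell}}\in\mathfrak S_\chi(\boldsymbol A_{K_\ell},\boldsymbol a,g)\subset\mathfrak E_\chi(\boldsymbol A_{K_\ell},\boldsymbol a,g)$ by~(\ref{incl})): integrate the n.e.\ lower bound against~$\lambda^i_{\boldsymbol A_{K_\ell}}$ to force $\lambda^i_{\boldsymbol A_{K_\ell}}$-a.e.\ equality, then use the lower semicontinuity of~$W^i_{\boldsymbol\lambda_{\boldsymbol A_{K_\ell}}}$ on~$A^{K_\ell}_i$ to rule out strict violation on the support. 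That is precisely the content of the cited result from~\cite{ZPot2}, so your proof is self-contained where the paper's is not, but the route is not genuinely different.
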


\begin{proof} Fix $i\in I$ and $x_0\in S(\gamma^i)$. Passing to a subnet if
necessary, assume
$(\boldsymbol\lambda_{\boldsymbol{A}_{K_\ell}})_{K_\ell\in\{K_\ell\}_{A_\ell}}$
to converge vaguely to~$\boldsymbol\gamma$. Then one can find
$\zeta_{K_\ell}\in S(\lambda_{\boldsymbol{A}_{K_\ell}}^i)$ such that
$\zeta_{K_\ell}\to x_0$ as $K_\ell\uparrow A_\ell$.

Note that, under the standing assumptions (cf.~also
Remark~\ref{rem:lower}), \cite[Theorem~7.2]{ZPot2} is applicable to
each of $\boldsymbol\lambda_{\boldsymbol{A}_{K_\ell}}$,
$K_\ell\in\{K_\ell\}_{A_\ell}$. This gives
\begin{equation}\label{nuuu}a_iW_{\boldsymbol\lambda_{\boldsymbol{A}_{K_\ell}}}^i(\zeta_{K_\ell})\leqslant\bigl\langle
W_{\boldsymbol\lambda_{\boldsymbol{A}_{K_\ell}}}^i,\lambda_{\boldsymbol{A}_{K_\ell}}^i\bigr\rangle
g(\zeta_{K_\ell}).\end{equation} Another observation is that,
according to~(\ref{threestars})
with~$(\boldsymbol\lambda_{\boldsymbol{A}_{K_\ell}})_{K_\ell\in\{K_\ell\}_{A_\ell}}$
instead of~$(\boldsymbol\mu_s)_{s\in S}$,
\begin{equation}\label{nu''}\bigl\langle
W_{\boldsymbol\gamma}^i,\gamma^i\bigr\rangle=\lim_{K_\ell\uparrow
A_\ell}\,\bigl\langle
W_{\boldsymbol\lambda_{\boldsymbol{A}_{K_\ell}}}^i,
\lambda_{\boldsymbol{A}_{K_\ell}}^i\bigr\rangle.\end{equation}

Having substituted (\ref{wpopot}) with
$\boldsymbol\mu=\boldsymbol\lambda_{\boldsymbol{A}_{K_\ell}}$ into
the left-hand side of inequality~(\ref{nuuu}), we pass to the limit
as $K_\ell\uparrow A_\ell$. In view of~(\ref{nu''}) and the lower
semicontinuity of $f_i=\alpha_i\kappa(x,\chi)$ on~$A_i$ (see
Remark~\ref{rem:lower}), we then see that assertion~(\ref{descpot2})
will be proved once we establish
\[\kappa(x_0,R\boldsymbol\gamma^-)=
\lim_{K_\ell\uparrow
A_\ell}\,\kappa(\zeta_{K_\ell},R\boldsymbol\lambda_{\boldsymbol{A}_{K_\ell}}^-)\quad\text{and}\quad\kappa(x_0,R\boldsymbol\gamma^+)\leqslant\lim_{K_\ell\uparrow
A_\ell}\,\kappa(\zeta_{K_\ell},R\boldsymbol\lambda_{\boldsymbol{A}_{K_\ell}}^+),\]
but these two follow directly from Lemma~\ref{lemma:vague.cont} and
the lower semicontinuity of the mapping
$(x,\nu)\mapsto\kappa(x,\nu)$ on $\mathrm X\times\mathfrak M^+(X)$
(see~\cite[Lemma~2.2.1]{F1}), respectively.
\end{proof}

Applying Lemma~\ref{lemma:lemma} to $\boldsymbol\gamma$,  we get
$\langle g,\gamma^i\rangle=a_i$ for all $i\ne\ell$, so that
\begin{equation}\label{g000}\boldsymbol\gamma\in\mathcal E^+(\boldsymbol A,\boldsymbol a,g,CL).\end{equation}
We next proceed to show that, due to the additional
requirement~(\ref{aelll}), it also holds
\begin{equation}\label{g0000}\langle g,\gamma^\ell\rangle=a_\ell.\end{equation}

\begin{lemma}\label{0000}It is true that\/ $\langle
 W_{\boldsymbol\gamma}^\ell,\gamma^\ell\rangle\ne0$.\end{lemma}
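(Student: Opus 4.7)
The plan is to argue by contradiction: assume $\langle W^\ell_{\boldsymbol\gamma},\gamma^\ell\rangle=0$ and show that $\boldsymbol\gamma$ then solves the auxiliary $(\boldsymbol A,\boldsymbol a,g,\chi,CL)$-problem. Once this is established, Corollary~\ref{cor:uniqu} will give $R\boldsymbol\gamma=R\tilde{\boldsymbol\lambda}$; combined with the disjointness hypothesis~(\ref{nonint}) and the Hahn--Jordan identification~(\ref{HJ}), this will force $\gamma^\ell=\tilde\lambda^\ell$, whence $\langle g,\gamma^\ell\rangle=\langle g,\tilde\lambda^\ell\rangle>a_\ell$ by~(\ref{aelll}), contradicting the bound $\langle g,\gamma^\ell\rangle\leqslant a_\ell$ supplied by~(\ref{inequal}). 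The opening observation, coming from (\ref{descpot1}) with $i=\ell$ and the contradiction hypothesis, is that $W^\ell_{\boldsymbol\gamma}(x)\geqslant0$ n.e.\ in~$A_\ell$.

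The heart of the argument is the variational inequality
\[\sum_{i\in I}\,\langle W^i_{\boldsymbol\gamma},\mu^i-\gamma^i\rangle\geqslant0\quad\text{for every \ }\boldsymbol\mu\in\mathcal E_H^+(\boldsymbol A,\boldsymbol a,g,CL).\]
For $i\in CL$, the equalities $\langle g,\mu^i\rangle=a_i=\langle g,\gamma^i\rangle$ hold, and since $\mu^i$ is bounded (as $\boldsymbol\mu\in\mathcal E_H^+$), Lemma~\ref{a.e.} upgrades~(\ref{descpot1}) from ``n.e.'' to ``$\mu^i$-a.e.''; integration against~$\mu^i$ then yields $\langle W^i_{\boldsymbol\gamma},\mu^i\rangle\geqslant\langle W^i_{\boldsymbol\gamma},\gamma^i\rangle$. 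For $i=\ell$ the pointwise positivity of $W^\ell_{\boldsymbol\gamma}$ together with $\langle W^\ell_{\boldsymbol\gamma},\gamma^\ell\rangle=0$ analogously gives $\langle W^\ell_{\boldsymbol\gamma},\mu^\ell\rangle\geqslant0=\langle W^\ell_{\boldsymbol\gamma},\gamma^\ell\rangle$. Substituting the summed inequality into the quadratic identity
\[G_\chi(\boldsymbol\mu)-G_\chi(\boldsymbol\gamma)=\|R\boldsymbol\mu-R\boldsymbol\gamma\|^2+2\sum_{i\in I}\,\langle W^i_{\boldsymbol\gamma},\mu^i-\gamma^i\rangle\]
(obtained from~(\ref{gmu}) and~(\ref{xxx})) gives $G_\chi(\boldsymbol\mu)\geqslant G_\chi(\boldsymbol\gamma)$; Lemma~\ref{lemma:H} then promotes this to arbitrary $\boldsymbol\mu\in\mathcal E^+(\boldsymbol A,\boldsymbol a,g,CL)$, and since $\boldsymbol\gamma$ belongs to that class by~(\ref{g000}), we conclude $\boldsymbol\gamma\in\mathfrak S_\chi(\boldsymbol A,\boldsymbol a,g,CL)$.

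The main conceptual hurdle is the realization that the first-order optimality conditions (\ref{descpot1}) and (\ref{descpot3}) for the \emph{main} problem~--- which feature equality constraints on every component~$i\in I$~--- nonetheless suffice to verify the $CL$-variational inequality, in which the $\ell$-th component is unconstrained. The bridge is provided by the contradiction hypothesis itself: the combination of $W^\ell_{\boldsymbol\gamma}\geqslant 0$ n.e.\ on~$A_\ell$ and $\langle W^\ell_{\boldsymbol\gamma},\gamma^\ell\rangle=0$ is precisely the complementary-slackness statement one would attach to a non-binding constraint, allowing the $i=\ell$ term to enter the variational inequality on the same footing as the constrained components.
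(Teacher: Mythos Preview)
Your proof is correct and follows essentially the same approach as the paper's: assume $\langle W^\ell_{\boldsymbol\gamma},\gamma^\ell\rangle=0$, integrate~(\ref{descpot1}) against admissible test measures to conclude that $\boldsymbol\gamma\in\mathfrak S_\chi(\boldsymbol A,\boldsymbol a,g,CL)$, and then derive a contradiction from~(\ref{aelll}) and~(\ref{inequal}). The only difference is cosmetic: the paper tests~(\ref{descpot1}) solely against the fixed minimizer~$\tilde{\boldsymbol\lambda}$ and closes the energy comparison via Cauchy--Schwarz, whereas you test against every~$\boldsymbol\mu\in\mathcal E_H^+(\boldsymbol A,\boldsymbol a,g,CL)$ and use the exact quadratic expansion of~$G_\chi$.
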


\begin{proof}Assume, on the contrary, that
\begin{equation}
 \label{wjzero}\langle W_{\boldsymbol\gamma}^\ell,\gamma^\ell\rangle=0.
\end{equation}
Then, according to (\ref{xxx}) with
$\boldsymbol\mu=\boldsymbol\mu_1=\boldsymbol\gamma$,
\begin{equation}
 \label{sumwj}\sum_{j\ne\ell}\,\langle W_{\boldsymbol\gamma}^j,\gamma^j\rangle=\sum_{i\in I}\,\langle W_{\boldsymbol\gamma}^i,\gamma^i\rangle
=\kappa(\chi+R\boldsymbol\gamma,R\boldsymbol\gamma).
\end{equation}
As all the coordinates of the given
$\tilde{\boldsymbol\lambda}\in\mathfrak S_\chi(\boldsymbol
A,\boldsymbol a,g,CL)$ are bounded by Corollary~\ref{min:proj}, for
each $i\in I$ relation~(\ref{descpot1}) holds
$\tilde{\lambda}^i$-a.e. in~$\mathrm X$; that is,
\[a_iW_{\boldsymbol\gamma}^i(x)\geqslant\langle W_{\boldsymbol\gamma}^i,\gamma^i\rangle g(x)\quad
\tilde{\lambda}^i\text{-a.e. in \ }\mathrm X,\quad i\in I.\] Having
integrated this relation, divided by~$a_i$, with respect to
$\tilde{\lambda}^i$ and then summing up the inequalities obtained
over all $i\in I$, on account of equalities~(\ref{wjzero}),
(\ref{sumwj}) and $\langle g,\tilde{\lambda}^i\rangle=a_i$ for all
$i\ne\ell$ we get
\[\sum_{i\in I}\,\langle W_{\boldsymbol\gamma}^i,\tilde{\lambda}^i\rangle\geqslant\sum_{i\in I}\,
\frac{\langle W_{\boldsymbol\gamma}^i,\gamma^i\rangle}{a_i}\langle
g,\tilde{\lambda}^i\rangle= \sum_{j\ne\ell}\,\langle
W_{\boldsymbol\gamma}^j,\gamma^j\rangle=\kappa(\chi+R\boldsymbol\gamma,R\boldsymbol\gamma),\]
which because of (\ref{xxx}) with $\boldsymbol\mu=\boldsymbol\gamma$
and $\boldsymbol\mu_1=\tilde{\boldsymbol\lambda}$ can be rewritten
in the form
\[\kappa(\chi+R\boldsymbol\gamma,R\boldsymbol\gamma)\leqslant
\kappa(\chi+R\boldsymbol\gamma,R\tilde{\boldsymbol\lambda})\] or,
equivalently,
\[\|\chi+R\boldsymbol\gamma\|^2\leqslant\kappa(\chi+R\boldsymbol\gamma,\chi+R\tilde{\boldsymbol\lambda}).\]
Applying the Cauchy--Schwarz inequality and, subsequently,
identity~(\ref{gmu}), we then obtain
\[G_\chi(\boldsymbol\gamma)\leqslant G_\chi(\boldsymbol A,\boldsymbol a,g,CL),\]
which in view of inclusion~(\ref{g000}) yields
$\boldsymbol\gamma\in\mathfrak S_\chi(\boldsymbol A,\boldsymbol
a,g,CL)$.

By relation~(\ref{aelll}) and the observation following it, we thus
get
\[\langle g,\gamma^\ell\rangle>a_\ell.\]
Since this contradicts to relation~(\ref{inequal}), the lemma is
established.
\end{proof}

Now, having integrated (\ref{descpot3}) for $i=\ell$ with respect to
the (bounded) measure~$\gamma^\ell$, on account of Lemma~\ref{0000}
we obtain~(\ref{g0000}), which together with inclusion~(\ref{g000})
shows that, actually, $\boldsymbol\gamma\in\mathcal E^+(\boldsymbol
A,\boldsymbol a,g)$. Combined with Corollary~\ref{IorII}, this
proves that $\boldsymbol\gamma$ is, in fact, a solution to the
$(\boldsymbol A,\boldsymbol a,g,\chi)$-problem, and
Theorem~\ref{th:solvable2} follows.\hfill$\square$

\bigskip\noindent{\bf\small Acknowledgments} {\small The author is very grateful
to Professor W.L.~Wendland for the careful reading of the manuscript
and his suggestions to improve it. A part of this research was done
during the author's visit to the DFG Cluster of Excellence
Simulation Technologies at the University of Stuttgart during
February of 2012, and the author acknowledges this institution for
the support and the excellent working conditions.}

\end{document}